\documentclass[12pt,a4paper]{amsart}

\usepackage{amssymb}
\usepackage{amsrefs}
\usepackage[all]{xy}
\usepackage{hyperref} 

\numberwithin{equation}{section}

\newcounter{claim}[section]

\newtheorem{corollary}[claim]{Corollary}
\newtheorem{lemma}[claim]{Lemma}
\newtheorem{proposition}[claim]{Proposition}
\newtheorem{theorem}[claim]{Theorem}

\newcommand{\calA}{\mathcal{A}}

\newcommand{\calE}{\mathcal{E}}
\newcommand{\calI}{\mathcal{I}}
\newcommand{\calJ}{\mathcal{J}}
\newcommand{\calP}{\mathcal{P}}
\newcommand{\calQ}{\mathcal{Q}}
\newcommand{\calR}{\mathcal{R}}
\newcommand{\calT}{\mathcal{T}}
\newcommand{\calU}{\mathcal{U}}
\newcommand{\calX}{\mathcal{X}}
\newcommand{\calZ}{\mathcal{Z}}

\newcommand{\bbM}{\mathbb{M}}
\newcommand{\bbN}{\mathbb{N}}
\newcommand{\bbP}{\mathbb{P}}
\newcommand{\bbX}{\mathbb{X}}
\newcommand{\bbZ}{\mathbb{Z}}

\newcommand{\bd}{\mathbf{d}}
\newcommand{\be}{\mathbf{e}}
\newcommand{\bh}{\mathbf{h}}
\newcommand{\br}{\mathbf{r}}
\newcommand{\bP}{\mathbf{P}}
\newcommand{\bQ}{\mathbf{Q}}
\newcommand{\bR}{\mathbf{R}}
\newcommand{\bone}{\mathbf{1}}
\newcommand{\bDelta}{\mathbf{\Delta}}

\newcommand{\frakR}{\mathfrak{R}}
\newcommand{\frakX}{\mathfrak{X}}

\let\mod=\undefined
\DeclareMathOperator{\GL}{GL} %
\DeclareMathOperator{\Id}{Id} %
\DeclareMathOperator{\SI}{SI} %
\DeclareMathOperator{\add}{add} %
\DeclareMathOperator{\End}{End} %
\DeclareMathOperator{\Ext}{Ext} %
\DeclareMathOperator{\Hom}{Hom} %
\DeclareMathOperator{\ind}{ind} %
\DeclareMathOperator{\Ker}{Ker} %
\DeclareMathOperator{\mod}{mod} %
\DeclareMathOperator{\rep}{rep} %
\DeclareMathOperator{\chr}{char} %
\DeclareMathOperator{\idim}{idim} %
\DeclareMathOperator{\pdim}{pdim} %
\DeclareMathOperator{\Coker}{Coker} %
\DeclareMathOperator{\gldim}{gldim} %
\DeclareMathOperator{\bdim}{\mathbf{dim}} %

\newcommand{\vertexU}[1]{\bullet \save*+!U{\scriptstyle #1} \restore}

\title[Semi-invariants for concealed-canonical
algebras]{Semi-invariants for concealed-canonical algebras}

\author{Grzegorz Bobi\'nski}

\address{Faculty of Mathematics and Computer Science \\ Nicolaus
Copernicus University \\ ul.~Chopina 12/18 \\ 87-100 Toru\'n \\
Poland}

\email{gregbob@mat.uni.torun.pl}

\subjclass[2000]{Primary: 16G20; Secondary: 13A50}

\keywords{concealed-canonical algebra, semi-invariant, sincere
separating exact subcategory}

\begin{document}

\begin{abstract}
In the paper is we generalize known descriptions of rings of
semi-invariants for regular modules over Euclidean and canonical
algebras to arbitrary concealed-canonical algebras.
\end{abstract}

\maketitle

Throughout the paper $\Bbbk$ is a fixed algebraically closed
field. By $\bbZ$, $\bbN$ and $\bbN_+$ we denote the sets of the
integers, the non-negative integers and the positive integers,
respectively. Finally, if $i, j \in \bbZ$, then $[i, j] := \{ k
\in \bbZ \mid i \leq k \leq j \}$ (in particular, $[i, j] =
\varnothing$ if $i > j$).

\section*{Introduction}

Concealed-canonical algebras have been introduced by Lenzing and
Meltzer~\cite{LenzingMeltzer1996} as a generalization of Ringel's
canonical algebras~\cite{Ringel1984}. An algebra is called
concealed-canonical if it is isomorphism to the endomorphism ring
of a tilting bundle over a weighted projective line. The
concealed-canonical algebras can be characterized as the algebras
which posses sincere separating exact
subcategory~\cite{LenzingdelaPena1999} (see
also~\cite{Skowronski1996}). Together with tilted
algebras~\cites{Bongartz1981, HappelRingel1982}, the
concealed-canonical algebras form two most prominent classes of
quasi-tilted algebras~\cite{HappelReitenSmalo1996}. Moreover,
according to a famous result of Happel~\cite{Happel2001}, every
quasi-tilted algebra is derived equivalent either to a tilted
algebra or to a concealed-canonical algebra.

Despite investigations of a structure of the categories of modules
over concealed-canonical algebras, geometric problems have been
studied for this class of algebras (see for
example~\cites{BarotSchroer2001, Bobinski2008,
BobinskiSkowronski2002, GeissSchroer2003, DomokosLenzing2000,
DomokosLenzing2002, SkowronskiWeyman1999}). Often these problem
were studied for canonical algebras only and sometimes the authors
restrict their attention to the concealed-canonical algebras of
tame representation type.

In the paper we study a problem, which has been already
investigated in the case of canonical algebras. Namely, given a
concealed-canonical algebra $\Lambda$ and a module $R$, which is a
direct sum of modules from of sincere separating exact subcategory
of $\mod \Lambda$, we want to describe a structure of the ring of
semi-invariants associated to $\Lambda$ and the dimension vector
of $R$. This problem has been solved provided $\Lambda$ is a
canonical algebra and $R$ comes from a distinguished sincere
separating exact subcategory of $\mod \Lambda$ (the answers have
been obtained independently by Skowro\'nski and
Weyman~\cite{SkowronskiWeyman1999} and Domokos and
Lenzing~\cites{DomokosLenzing2000, DomokosLenzing2002}). This
problem has also been solved for another class of
concealed-canonical algebras, namely the path algebras of
Euclidean quivers~\cite{SkowronskiWeyman2000} (see
also~\cites{DiTrapano2010, Shmelkin1997}). The obtained results
are very similar, although the methods used in the proof are
completely different. The aim my paper is to obtain a unified
proof of the above results, which would generalize to an arbitrary
concealed-canonical algebra. This aim is achieved if the
characteristic of $\Bbbk$ equals $0$. If $\chr \Bbbk > 0$, then we
show that an analogous result is true if we study the
semi-invariants which are the restrictions of the semi-invariants
on the ambient affine space. The precise formulation of the
obtained results can be found in Section~\ref{section result}. In
particular we prove that the studied rings of semi-invariants are
always complete intersections, and are polynomial rings if the
considered dimension vector is ``sufficiently big''.

The paper is organized as follows. In Section~\ref{section
quivers} we introduce a setup of quivers and their
representations, which due to a result of
Gabriel~\cite{Gabriel1972} is an equivalent way of thinking about
algebras and modules. Next, in Section~\ref{section tubular} we
gather facts about concealed-canonical algebras (equivalently,
quivers). In Section~\ref{section semiinvariants} we introduce
semi-invariants and present their basic properties. Next, in
Section~\ref{section preliminary} we study the semi-invariants in
the case of concealed-canonical quivers more closely.
Section~\ref{section Kronecker} is devoted to presentation of
necessary facts about the Kronecker quiver, which is the minimal
concealed-canonical quiver. Finally, in Section~\ref{section
result} we present and proof the main result.

The author gratefully acknowledges the support of the Alexander
von Humboldt Foundation. The research was also supported by
National Science Center Grant No.\ DEC-2011/03/B/ST1/00847.

\section{Quivers and their representations} \label{section
quivers}

By a quiver $\Delta$ we mean a finite set $\Delta_0$ (called the
set of vertices of $\Delta$) together with a finite set $\Delta_1$
(called the set of arrows of $\Delta$) and two maps $s, t :
\Delta_1 \to \Delta_0$, which assign to each arrow $\alpha$ its
starting vertex $s \alpha$ and its terminating vertex $t \alpha$,
respectively. By a path of length $n \in \bbN_+$ in a quiver
$\Delta$ we mean a sequence $\sigma = (\alpha_1, \ldots,
\alpha_n)$ of arrows such that $s \alpha_i = t \alpha_{i + 1}$ for
each $i \in [1, n - 1]$. In the above situation we put $\ell
\sigma := n$, $s \sigma := s \alpha_n$ and $t \sigma := t
\alpha_1$. We treat every arrow in $\Delta$ as a path of length
$1$. Moreover, for each vertex $x$ we have a trivial path
$\bone_x$ at $x$ such that $\ell \bone_x := 0$ and $s \bone_x := x
=: t \bone_x$. For the rest of the paper we assume that the
considered quivers do not have oriented cycles, where by an
oriented cycle we mean a path $\sigma$ of positive length such
that $s \sigma = t \sigma$.

Let $\Delta$ be a quiver. We define its path category $\Bbbk
\Delta$ to be the category whose objects are the vertices of
$\Delta$ and, for $x, y \in \Delta_0$, the morphisms from $x$ to
$y$ are the formal $\Bbbk$-linear combinations of paths starting
at $x$ and terminating at $y$. If $\omega$ is a morphism from $x$
to $y$, then we write $s \omega := x$ and $t \omega := y$. By a
representation of $\Delta$ we mean a functor from $\Bbbk \Delta$
to the category $\mod \Bbbk$ of finite dimensional vector spaces.
We denote the category of representations of $\Delta$ by $\rep
\Delta$. Observe that every representation of $\Delta$ is uniquely
determined by its values on the vertices and the arrows. Given a
representation $M$ of $\Delta$ we denote by $\bdim M$ its
dimension vector defined by the formula $(\bdim M) (x) :=
\dim_\Bbbk M (x)$, for $x \in \Delta_0$. Observe that $\bdim M \in
\bbN^{\Delta_0}$ for each representation $M$ of $\Delta$. We call
the elements of $\bbN^{\Delta_0}$ dimension vectors. A dimension
vector $\bd$ is called sincere if $\bd (x) \neq 0$ for each $x \in
\Delta_0$.

By a relation in a quiver $\Delta$ we mean a $\Bbbk$-linear
combination of paths of lengths at least $2$ having a common
starting vertex and a common terminating vertex. Note that each
relation in a quiver $\Delta$ is a morphism in $\Bbbk \Delta$. A
set $\frakR$ of relations in a quiver $\Delta$ is called minimal
if $\langle \frakR \setminus \{ \rho \} \rangle \neq \langle
\frakR \rangle$ for each $\rho \in \frakR$, where for a set
$\frakX$ of morphisms in $\Delta$ we denote by $\langle \frakX
\rangle$ the ideal in $\Bbbk \Delta$ generated by $\frakX$.
Observe that each minimal set of relations is finite. By a bound
quiver $\bDelta$ we mean a quiver $\Delta$ together with a minimal
set $\frakR$ of relations. Given a bound quiver $\bDelta$ we
denote by $\Bbbk \bDelta$ its path category, i.e.\ $\Bbbk \bDelta
:= \Bbbk \Delta / \langle \frakR \rangle$. By a representation of
a bound quiver $\bDelta$ we mean a functor from $\Bbbk \bDelta$ to
$\mod \Bbbk$. In other words, a representation of $\bDelta$ is a
representation $M$ of $\Delta$ such that $M (\rho) = 0$ for each
$\rho \in \frakR$. We denote the category of representations of a
bound quiver $\bDelta$ by $\rep \bDelta$. Moreover, we denote by
$\ind \bDelta$ the full subcategory of $\rep \bDelta$ consisting
of the indecomposable representations. It is known that $\rep
\bDelta$ is an abelian Krull--Schmidt category.

An important role in the study of representations of quivers is
played by the Auslander--Reiten translations $\tau$ and
$\tau^-$~\cite{AssemSimsonSkowronski2006}*{Section~IV.2}, which
assign to each representation of a bound quiver $\bDelta$ another
representation of $\bDelta$. In particular, we will use the
following consequences of the Auslander--Reiten
formulas~\cite{AssemSimsonSkowronski2006}*{Theorem~IV.2.13}. Let
$M$ and $N$ be representations of a bound quiver $\bDelta$. If
$\pdim_\bDelta M \leq 1$, then
\begin{equation} \label{ARformula1}
\dim_\Bbbk \Ext_\bDelta^1 (M, N) = \dim_\Bbbk \Hom_\bDelta (N,
\tau M).
\end{equation}
Dually, if $\idim_\bDelta N \leq 1$, then
\begin{equation} \label{ARformula2}
\dim_\Bbbk \Ext_\bDelta^1 (M, N) = \dim_\Bbbk \Hom_\bDelta (\tau^-
N, M).
\end{equation}

Let $\bDelta$ be a bound quiver. We define the corresponding Tits
form $\langle -, - \rangle_\bDelta : \bbZ^{\Delta_0} \times
\bbZ^{\Delta_0} \to \bbZ$ by the formula
\[
\langle \bd', \bd'' \rangle_\bDelta := \sum_{x \in \Delta_0} \bd'
(x) \cdot \bd'' (x) - \sum_{\alpha \in \Delta_1} \bd' (s \alpha)
\cdot \bd'' (t \alpha) + \sum_{\rho \in \frakR} \bd' (s \rho)
\cdot \bd'' (t \rho),
\]
for $\bd', \bd'' \in \bbZ^{\Delta_0}$.
Bongartz~\cite{Bongartz1983}*{Proposition~2.2} has proved that
\begin{multline*}
\langle \bdim M, \bdim N \rangle_\bDelta
\\
= \dim_\Bbbk \Hom_\bDelta (M, N) - \dim_\Bbbk \Ext_\bDelta^1 (M,
N) + \dim_\Bbbk \Ext_\bDelta^2 (M, N)
\end{multline*}
for any $M, N \in \rep \bDelta$ provided $\gldim \bDelta \leq 2$.

\section{Separating exact subcategories} \label{section tubular}

In this section we present facts about sincere separating exact
subcategories, which we use in our considerations. For the proofs
we refer to~\cites{Ringel1984, LenzingdelaPena1999}.

Let $\bDelta$ be a bound quiver and $\calX$ a full subcategory of
$\ind \bDelta$. We denote by $\add \calX$ the full subcategory of
$\rep \bDelta$ formed by the direct sums of representations from
$\calX$. We say that $\calX$ is an exact subcategory of $\ind
\bDelta$ if $\add \calX$ is an exact subcategory of $\rep
\bDelta$, where by an exact subcategory of $\rep \bDelta$ we mean
a full subcategory $\calE$ of $\rep \bDelta$ such that $\calE$ is
an abelian category and the inclusion functor $\calE
\hookrightarrow \rep \bDelta$ is exact. We put
\begin{gather*}
\calX_- := \{ X \in \ind \bDelta : \text{$\Hom_\bDelta (\calX, X)
= 0$} \}
\\
\intertext{and} %
\calX_+ := \{ X \in \ind \bDelta : \text{$\Hom_\bDelta (X, \calX)
= 0$} \}.
\end{gather*}

Let $\bDelta$ be a bound quiver.
Following~\cite{LenzingdelaPena1999} we say that $\calR$ is a
sincere separating exact subcategory of $\ind \bDelta$ provided
the following conditions are satisfied:
\begin{enumerate}

\item
$\calR$ is an exact subcategory of $\ind \bDelta$ stable under the
actions of the Auslander--Reiten translations $\tau$ and $\tau^-$.

\item
$\ind \bDelta = \calR_- \cup \calR \cup \calR_+$.

\item
$\Hom_\bDelta (X, \calR) \neq 0$ for each $X \in \calR_-$ and
$\Hom_\bDelta (\calR, X) \neq 0$ for each $X \in \calR_+$.

\item
$P \in \calR_-$, for each indecomposable projective representation
$P$ of $\bDelta$, and $I \in \calR_+$, for each indecomposable
injective representation $I$ of $\bDelta$.

\end{enumerate}
Lenzing and de la Pe\~na~\cite{LenzingdelaPena1999} have proved
that there exists a sincere separating exact subcategory $\calR$
of $\ind \bDelta$ if and only if $\bDelta$ is concealed-canonical,
i.e.\ $\rep \bDelta$ is equivalent to the category of modules over
a concealed-canonical algebra. In particular, if this is the case,
then $\gldim \bDelta \leq 2$.

For the rest of the section we fix a concealed-canonical bound
quiver $\bDelta$ and a sincere separating exact subcategory
$\calR$ of $\ind \bDelta$. Moreover, we put $\calP := \calR_-$ and
$\calQ := \calR_+$. Finally, we denote by $\bP$, $\bR$ and $\bQ$
the dimension vectors of the representations from $\add \calP$,
$\add \calR$ and $\add \calQ$, respectively.

It is known that $\pdim_\bDelta P \leq 1$ for each $P \in \calP$
and $\idim_\bDelta Q \leq 1$ for each $Q \in \calQ$. Next,
$\pdim_\bDelta R = 1$ and $\idim_\bDelta R = 1$ for each $R \in
\calR$. The categories $\calP$ and $\calQ$ are closed under the
actions of $\tau$ and $\tau^-$, hence using the Auslander--Reiten
formulas~\eqref{ARformula1} and~\eqref{ARformula2} we obtain that
$\Ext_\bDelta^1 (\calP, \calR) = 0 = \Ext_\bDelta^1 (\calR,
\calQ)$. In particular,
\begin{equation} \label{eq PQR}
\langle \bd', \bd \rangle_\bDelta \geq 0 \qquad \text{and} \qquad
\langle \bd, \bd'' \rangle_\bDelta \geq 0
\end{equation}
for all $\bd' \in \bP$, $\bd \in \bR$ and $\bd'' \in \bQ$.

We have $\calR = \coprod_{\lambda \in \bbP_\Bbbk^1} \calR_\lambda$
for connected uniserial categories $\calR_\lambda$, $\lambda \in
\bbP_\Bbbk^1$. For $\lambda \in \bbP_\Bbbk^1$ we denote by
$r_\lambda$ the number of the pairwise non-isomorphic simple
objects in $\add \calR_\lambda$. Then $r_\lambda < \infty$ for
each $\lambda \in \bbP_\Bbbk^1$.  Moreover, $\sum_{\lambda \in
\bbP_\Bbbk^1} (r_\lambda - 1) = |\Delta_0| - 2$. In particular, if
$\bbX_0 := \{ \lambda \in \bbP_\Bbbk^1 : \text{$r_\lambda > 1$}
\}$, then $|\bbX_0| < \infty$.

Fix $\lambda \in \bbP_\Bbbk^1$. If $R_{\lambda, 0}$, \ldots,
$R_{\lambda, r_\lambda - 1}$ are chosen representatives of the
isomorphisms classes of the simple objects in $\add
\calR_\lambda$, then we may assume that $\tau R_{\lambda, i} =
R_{\lambda, i - 1}$ for each $i \in [0, r_\lambda - 1]$, where we
put $R_{\lambda, i} := R_{\lambda, i \mod r_\lambda}$, for $i \in
\bbZ$. For any $i \in \bbZ$ and $n \in \bbN_+$ there exists a
unique (up to isomorphism) representation in $\calR_\lambda$ whose
socle and length in $\add \calR_\lambda$ are $R_{\lambda, i}$ and
$n$, respectively. We fix such representation and denote it by
$R_{\lambda, i}^{(n)}$ and its dimension vector by $\be_{\lambda,
i}^n$. Then the composition factors of $R_{\lambda, i}^{(n)}$ are
(starting from the socle) $R_{\lambda, i}$, \ldots, $R_{\lambda, i
+ n - 1}$. Consequently, $\be_{\lambda, i}^n = \sum_{j \in [i, i +
n - 1]} \be_{\lambda, j}$, where $\be_{\lambda, j} := \bdim
R_{\lambda, j}$, for $j \in \bbZ$. Moreover, for all $i \in \bbZ$
and $n, m \in \bbN_+$ there exists an exact sequence
\begin{equation} \label{eq sequence}
0 \to R_{\lambda, i}^{(n)} \to R_{\lambda, i}^{(n + m)} \to
R_{\lambda, i + n}^{(m)} \to 0.
\end{equation}
Obviously, for each $R \in \calR_\lambda$ there exist $i \in \bbZ$
and $n \in \bbN_+$ such that $R \simeq R_{\lambda, i}^{(n)}$.
Moreover, it is known that the vectors $\be_{\lambda, 0}$, \ldots,
$\be_{\lambda, r_\lambda - 1}$ are linearly independent.
Consequently, if $R \in \add \calR_\lambda$, then there exist
uniquely determined $q_0^R, \ldots, q_{r_\lambda - 1}^R \in \bbN$
such that $\bdim R = \sum_{i \in [0, r_\lambda - 1]} q_i^R
\be_{\lambda, i}$. We put $q_i^R := q_{i \mod r_\lambda}^R$, for
$i \in \bbZ$. Observe that for each $i \in \bbZ$ the number
$q_{\lambda, i}^R$ counts the multiplicity of $R_{\lambda, i}$ as
a composition factor in the Jordan--H\"older filtration of $R$ in
the category $\add \calR_\lambda$.

Let $R = \bigoplus_{\lambda \in \bbP_\Bbbk^1} R_\lambda$, for
$R_\lambda \in \add \calR_\lambda$, $\lambda \in \bbP_\Bbbk^1$.
Then we put $q_{\lambda, i}^R := q_i^{R_\lambda}$ for $\lambda \in
\bbP_\Bbbk^1$ and $i \in \bbZ$. Next, we put $p_\lambda^R := \min
\{ q_{\lambda, i}^R : \text{$i \in \bbZ$} \}$, for $\lambda \in
\bbP_\Bbbk^1$, and $p_{\lambda, i}^R := q_{\lambda, i}^R -
p_\lambda^R$, for $\lambda \in \bbP_\Bbbk^1$ and $i \in \bbZ$.
Then
\[
\bdim R = \sum_{\lambda \in \bbP_\Bbbk^1} p_\lambda^R \cdot
\bh_\lambda + \sum_{\lambda \in \bbP_\Bbbk^1} \sum_{i \in [0,
r_\lambda - 1]} p_{\lambda, i}^R \cdot \be_{\lambda, i},
\]
where $\bh_\lambda := \sum_{i \in [0, r_\lambda - 1]}
\be_{\lambda, i}$, for $\lambda \in \bbP_\Bbbk^1$. It is known
that $\bh_\lambda = \bh_\mu$ for any $\lambda, \mu \in
\bbP_\Bbbk^1$. We denote this common value by $\bh$. Then
\[
\bdim R = p^R \cdot \bh + \sum_{\lambda \in \bbP_\Bbbk^1} \sum_{i
\in [0, r_\lambda - 1]} p_{\lambda, i}^R \cdot \be_{\lambda, i},
\]
where $p^R := \sum_{\lambda \in \bbP_\Bbbk^1} p_\lambda^R$. It is
known that if $R, R' \in \add \calR$ and $\bdim R = \bdim R'$,
then $p^R = p^{R'}$ and $p_{\lambda, i}^R = p_{\lambda, i}^{R'}$
for any $\lambda \in \bbP_\Bbbk^1$ and $i \in [0, r_\lambda - 1]$.
Consequently, for each $\bd \in \bR$ there exist uniquely
determined $p^\bd \in \bbN$ and $p_{\lambda, i}^\bd \in \bbN$ for
$\lambda \in \bbP_\Bbbk^1$ and $i \in [0, r_\lambda - 1]$, such
that
\[
\bd = p^\bd \cdot \bh + \sum_{\lambda \in \bbP_\Bbbk^1} \sum_{i
\in [0, r_\lambda - 1]} p_{\lambda, i}^\bd \cdot \be_{\lambda, i}
\]
and for each $\lambda \in \bbP_\Bbbk^1$ there exists $i \in [0,
r_\lambda - 1]$ with $p_{\lambda, i}^\bd = 0$. Again we put
$p_{\lambda, i}^\bd := p_{\lambda, i \mod r_\lambda}^\bd$, for
$\bd \in \bR$, $\lambda \in \bbP_\Bbbk^1$ and $i \in \bbZ$.

It is known that $\bh$ is sincere. Moreover, $\bh$ can be used in
order to distinguish between representations from $\calP$, $\calQ$
and $\calR$. Namely, if $X$ is an indecomposable representation of
$\bDelta$, then
\begin{equation} \label{eq P}
X \in \calP \qquad \text{if and only if} \qquad \langle \bdim X,
\bh \rangle_\bDelta > 0.
\end{equation}
Dually, if $X$ is an indecomposable representation of $\bDelta$,
then
\begin{equation} \label{eq Q}
X \in \calQ \qquad \text{if and only if} \qquad \langle \bh, \bdim
X \rangle_\bDelta > 0.
\end{equation}

Let $\lambda, \mu \in \bbP_\Bbbk^1$, $i, j \in \bbZ$ and $m, n \in
\bbN_+$. Then
\[
\dim_\Bbbk \Hom_\bDelta (R_{\lambda, i}^{(n)}, R_{\mu, j}^{(m)}) =
\min \{ q_{\lambda, i + n - 1}^{R_{\mu, j}^{(m)}}, q_{\mu,
j}^{R_{\lambda, i}^{(n)}} \}
\]
(in particular, $\Hom_\bDelta (R_{\lambda, i}^{(n)}, R_{\mu,
j}^{(m)}) = 0$ if $\lambda \neq \mu$). The above formula, together
with the Auslander--Reiten formula~\eqref{ARformula1}, implies
that
\begin{equation} \label{eq ed}
\langle \be_{\lambda, i}^n, \bd \rangle_\bDelta = p_{\lambda, i +
n - 1}^\bd - p_{\lambda, i - 1}^\bd
\end{equation}
for any $\lambda \in \bbP_\Bbbk^1$, $i \in \bbZ$, $n \in \bbN_+$
and $\bd \in \bR$. In particular,
\begin{equation} \label{eq h}
\langle \bh, \bd \rangle_\bDelta = 0 = \langle \bd, \bh
\rangle_\bDelta
\end{equation}
for each $\bd \in \bR$.

An important role in the proofs will be played by ext-minimal
representations. We call a representation $V$ ext-minimal if there
is no decomposition $V = V_1 \oplus V_2$ with $\Ext_\bDelta^1
(V_1, V_2) \neq 0$. We recall facts on ext-minimal representations
belonging to $\add \calR$.

First assume that $\bd \in \bR$ and $p^\bd = 0$. In this case
there is a unique (up to isomorphism) ext-minimal representation
$W \in \add \calR$ with dimension vector $\bd$, which is
constructed inductively in the following way. For $\lambda \in
\bbP_\Bbbk^1$, let $I_\lambda := \{ i \in [0, r_\lambda - 1] :
\text{$p_{\lambda, i}^\bd \neq 0$ and $p_{\lambda, i - 1}^\bd =
0$} \}$. For $\lambda \in \bbP_\Bbbk^1$ and $i \in I_\lambda$, we
denote by $m_{\lambda, i}$ the minimal $m \in \bbN_+$ such that
$p_{\lambda, i + m}^\bd = 0$. By induction there exists (unique up
to isomorphism) ext-minimal representation $W' \in \add \calR$
with dimension vector $\bd - \sum_{\lambda \in \bbP_\Bbbk^1}
\sum_{i \in I_\lambda} \be_{\lambda, i}^{m_{\lambda, i}}$. Then $W
:= W' \oplus \bigoplus_{\lambda \in \bbP_\Bbbk^1} \bigoplus_{i \in
I_\lambda} R_{\lambda, i}^{(m_{\lambda, i})}$ is ext-minimal.

We will use the following property of the above representation.

\begin{lemma} \label{lemma extminimal}
Assume $\bd \in \bR$ and $p^\bd = 0$. Let $W \in \add \calR$ be an
ext-minimal representation with dimension vector $\bd$. If
$\lambda \in \bbP_\Bbbk^1$, $i \in \bbZ$, $n \in \bbN_+$,
$p_{\lambda, i}^\bd = p_{\lambda, i + n}^\bd$ and $p_{\lambda,
j}^\bd \geq p_{\lambda, i}^\bd$ for each $j \in [i, i + n]$, then
$\Hom_\bDelta (R_{\lambda, i + 1}^{(n)}, W) = 0$.
\end{lemma}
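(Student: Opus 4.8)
The plan is to compute $\Hom_\bDelta(R_{\lambda, i+1}^{(n)}, W)$ by exploiting the explicit Hom-formula stated just before the lemma, namely $\dim_\Bbbk \Hom_\bDelta(R_{\lambda, i}^{(n)}, R_{\mu,j}^{(m)}) = \min\{q_{\lambda, i+n-1}^{R_{\mu,j}^{(m)}}, q_{\mu, j}^{R_{\lambda, i}^{(n)}}\}$, together with the inductive description of the ext-minimal $W$ recalled above the lemma. Since Hom is additive in the second argument and $W = W' \oplus \bigoplus_{\mu, k} R_{\mu, k}^{(m_{\mu, k})}$, and since $\Hom_\bDelta$ vanishes between regular representations over different points $\lambda \neq \mu$, it suffices to treat the summands of $W$ supported at $\lambda$: these are the induction step $W'$ (restricted to $\lambda$) and the terms $R_{\lambda, k}^{(m_{\lambda, k})}$ with $k \in I_\lambda$. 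I would argue by induction on the total length of $W$ in $\add \calR$, so the contribution of $W'$ is handled by the inductive hypothesis once I verify that the hypotheses of the lemma still hold for the dimension vector $\bd' := \bd - \sum_{\mu}\sum_{k \in I_\mu} \be_{\mu, k}^{m_{\mu, k}}$ of $W'$.

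First I would unwind the combinatorics of $p^\bd_{\lambda, \bullet}$. The standing hypothesis says: on the cyclic interval $[i, i+n]$ the sequence $p^\bd_{\lambda, j}$ is bounded below by its common endpoint value $c := p^\bd_{\lambda, i} = p^\bd_{\lambda, i+n}$. The key observation is that the "valley at level $c$" structure forces every index $k \in I_\lambda$ with $k \in (i, i+n]$ to have its associated block $[k, k + m_{\lambda, k} - 1]$ entirely contained in the open interval $(i, i+n)$ — because the block starts where $p^\bd$ jumps up from $0$ and ends where it returns to $0$, and $0 \le c$ means no block can straddle either endpoint $i$ or $i+n$ where the value equals $c$; if $c = 0$ the blocks at the boundary degenerate appropriately. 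I would make this precise and note that consequently, after subtracting one full level, the truncated vector $\bd'$ still satisfies $p^{\bd'}_{\lambda, i} = p^{\bd'}_{\lambda, i+n}$ and $p^{\bd'}_{\lambda, j} \ge p^{\bd'}_{\lambda, i}$ on $[i, i+n]$, which is exactly what the induction needs. (One also checks $p^{\bd'} = 0$, which is immediate.)

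The remaining point is to show $\Hom_\bDelta(R_{\lambda, i+1}^{(n)}, R_{\lambda, k}^{(m_{\lambda, k})}) = 0$ for each $k \in I_\lambda$. Using the Hom-formula, this dimension equals $\min\{q_{\lambda, i+n}^{R_{\lambda, k}^{(m)}},\, q_{\lambda, k}^{R_{\lambda, i+1}^{(n)}}\}$ with $m = m_{\lambda, k}$; note that $R_{\lambda, i+1}^{(n)}$ has composition factors $R_{\lambda, i+1}, \ldots, R_{\lambda, i+n}$, so $q_{\lambda, k}^{R_{\lambda, i+1}^{(n)}}$ is $1$ exactly when $k \in [i+1, i+n]$ (cyclically) and $0$ otherwise, and similarly $q_{\lambda, i+n}^{R_{\lambda, k}^{(m)}}$ is $1$ exactly when $i+n \in [k, k+m-1]$. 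So the Hom is nonzero iff both $k$ lies in the cyclic interval $[i+1, i+n]$ \emph{and} $i+n$ lies in the cyclic block $[k, k+m_{\lambda,k}-1]$. But by the valley analysis of the previous paragraph, whenever $k$ is such an index the block $[k, k+m_{\lambda, k}-1]$ stops strictly before reaching $i+n$ (since $p^\bd_{\lambda, i+n} = c$ while the block ends at the first index where $p^\bd$ drops to $0 \le c$, and on $[i, i+n]$ the value never drops below $c$). Hence the two conditions are incompatible and the Hom vanishes. Combining this with the inductive vanishing for $W'$ gives $\Hom_\bDelta(R_{\lambda, i+1}^{(n)}, W) = 0$.

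The main obstacle I anticipate is purely bookkeeping: handling the cyclic (mod $r_\lambda$) indexing cleanly, and correctly treating the degenerate cases $n \ge r_\lambda$ and $c = 0$, where "intervals" wrap around and the sets $I_\lambda$ may behave slightly differently; once the valley structure of $p^\bd_{\lambda, \bullet}$ on $[i, i+n]$ is correctly formalized, the actual Hom computations are one-line applications of the stated formula.
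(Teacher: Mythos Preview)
Your proposal is correct and follows essentially the same approach as the paper: induct along the construction of $W$ and use the Hom formula to show vanishing into each new summand $R_{\lambda,k}^{(m_{\lambda,k})}$. The paper streamlines your valley analysis into a clean dichotomy on $c := p_{\lambda,i}^{\bd}$: if $c = 0$ then $q_{\lambda,i+n}^{R_{\lambda,k}^{(m_{\lambda,k})}} = 0$ (the index $i+n$ is a zero of $p^{\bd}_{\lambda,\bullet}$, hence lies in no block), while if $c > 0$ then $q_{\lambda,k}^{R_{\lambda,i+1}^{(n)}} = 0$ (no $k \in I_\lambda$ can satisfy $k \in [i+1,i+n]$, since that would force $p^{\bd}_{\lambda,k-1} = 0$ inside $[i,i+n]$) --- this replaces the slightly tangled reasoning in your third paragraph.
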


\begin{proof}
Observe that $\Hom_\bDelta (R_{\lambda, i + 1}^{(n)}, R_{\lambda,
k}^{(m_{\lambda, k})}) = 0$ for each $k \in I_\lambda$, since one
easily checks that either $q_{\lambda, i + n}^{R_{\lambda,
k}^{(m_{\lambda, k})}} = 0$ (if $p_{\lambda, i}^\bd = 0$) or
$q_{\lambda, k}^{R_{\lambda, i + 1}^{(n)}} = 0$ (if $p_{\lambda,
i}^\bd > 0$). Now the claim follows by induction.
\end{proof}

Now let $\bd \in \bR$ be arbitrary. The description of the
ext-minimal representations with dimension vector $\bd$, which
belong to $\add \calR$, has been given
in~\cite{Ringel1980}*{Theorem~3.5} (this theorem has been
formulated in the case $\bDelta = (\Delta, \varnothing)$ for a
Euclidean quiver $\Delta$, but its proof translates to an
arbitrary concealed-canonical bound quiver). We will not repeat
the formulation here, but only mention some consequences. First,
if $W \in \add \calR$ and $\bdim W = \bd$, then $W$ is ext-minimal
if and only if $\dim_\Bbbk \End_\bDelta (W) = p^\bd + \langle \bd,
\bd \rangle_\bDelta$. In particular,
\begin{multline} \label{eq minimum}
p^\bd + \langle \bd, \bd \rangle_\bDelta
\\
= \min \{ \dim_\Bbbk \End_\bDelta (W) : \text{$W \in \add \calR$
such that $\bdim W = \bd$} \}
\end{multline}
(here we use also~\cite{Ringel1980}*{Lemma~2.1}). Next, if $W \in
\add \calR$ is an ext-minimal representation with dimension vector
$\bd$ and $W' \in \add \calR$ is an ext-minimal representation
with dimension vector $\bd - p^\bd \cdot \bh$, then there exists
an exact sequence $0 \to \bigoplus_{\lambda \in \bbP_\Bbbk^1}
R_\lambda \to W \to W' \to 0$ with $R_\lambda \in \calR_\lambda$
(in particular, indecomposable) for each $\lambda \in
\bbP_\Bbbk^1$ (obviously, $\bdim R_\lambda$ is a multiplicity of
$\bh$ for each $\lambda \in \bbP_\Bbbk^1$).

\section{Semi-invariants} \label{section semiinvariants}

Let $\bDelta$ be a bound quiver and $\bd$ a dimension vector. By
$\rep_\bDelta (\bd)$ we denote the set of the representations $M$
of $\bDelta$ such that $M (x) = \Bbbk^{\bd (x)}$ for each $x \in
\Delta_0$. We may identify $\rep_\bDelta (\bd)$ with a
Zariski-closed subset of the affine space $\rep_\Delta (\bd) :=
\prod_{\alpha \in \Delta_1} \bbM_{\bd (t \alpha) \times \bd (s
\alpha)} (\Bbbk)$, hence it has a structure of an affine variety.
The group $\GL (\bd) := \prod_{x \in \Delta_0} \GL (\bd (x))$ acts
on $\rep_\Delta (\bd)$ by conjugation: $(g \ast M) (\alpha) := g
(t \alpha) \cdot M (\alpha) \cdot g (s \alpha)^{-1}$, for $g \in
\GL (\bd)$, $M \in \rep_\Delta (\bd)$ and $\alpha \in \Delta_1$.
The set $\rep_\bDelta (\bd)$ is a $\GL (\bd)$-invariant subset of
$\rep_\Delta (\bd)$ and the $\GL (\bd)$-orbits in $\rep_\bDelta
(\bd)$ correspond to the isomorphism classes of the
representations of $\bDelta$ with dimension vector $\bd$. If
$\calX$ is a full subcategory of $\ind \bDelta$, then we denote by
$\calX (\bd)$ the set of $V \in \rep_\bDelta (\bd)$ such that $V
\in \add \calX$.

Let $\Delta$ be a quiver and $\theta \in \bbZ^{\Delta_0}$. We
treat $\theta$ as a $\bbZ$-linear function $\bbZ^{\Delta_0} \to
\bbZ$ in a usual way. If $\bd$ is a dimension vector, then by a
semi-invariant of weight $\theta$ we mean every function $f \in
\Bbbk [\rep_\Delta (\bd)]$ such that $f (g^{-1} \ast M) =
\chi^\theta (g) \cdot f (M)$ for any $g \in \GL (\bd)$ and $M \in
\rep_\Delta (\bd)$, where $\chi^\theta (g) := \prod_{x \in
\Delta_0} (\det g (x))^{\theta (x)}$ for $g \in \GL (\bd)$.

Now let $\bDelta$ be a bound quiver and $\bd$ a dimension vector.
If $\theta \in \bbZ^{\Delta_0}$, then a function $f \in \Bbbk
[\rep_\bDelta (\bd)]$ is called a semi-invariant of weight
$\theta$ if $f$ is the restriction of a semi-invariant of weight
$\theta$ from $\Bbbk [\rep_\Delta (\bd)]$. This definition differs
from the definition used in other papers on the subject (see for
example~\cites{BobinskiRiedtmannSkowronski2008, DerksenWeyman2002,
Domokos2002, DomokosLenzing2002}), however these are the
semi-invariants which one needs to understand in order to study
King's moduli spaces for representations of bound
quivers~\cite{King1994}. Moreover, the two definitions coincide if
the characteristic of $\Bbbk$ equals $0$. We denote the space of
the semi-invariants of weight $\theta$ by $\SI [\bDelta,
\bd]_\theta$. If $\bd$ is sincere, then we put $\SI [\bDelta, \bd]
:= \bigoplus_{\theta \in \bbZ^{\Delta_0}} \SI [\bDelta,
\bd]_\theta$ and call it the algebra of semi-invariants for
$\bDelta$ and $\bd$ (we assume sincerity of $\bd$, since under
this assumption $\bbZ^{\bDelta_0}$ is isomorphic with the
character group of $\GL (\bd)$).

We recall a construction from~\cite{Domokos2002}. Let $\bDelta$ be
a bound quiver. Fix a representation $V$ of $\bDelta$ and define
$\theta^V : \bbZ^{\Delta_0} \to \bbZ$ by the condition:
\[
\theta^V (\bdim M) = \dim_\Bbbk \Hom_\bDelta (V, M) - \dim_\Bbbk
\Hom_\bDelta (M, \tau V)
\]
for each representation $M$ of $\bDelta$. The
formula~\eqref{ARformula1} implies that $\theta^V = \langle \bdim
V, - \rangle_\bDelta$ if $\pdim_\bDelta V \leq 1$. Dually, if $V$
has no indecomposable projective direct summands (i.e.\ $\tau^-
\tau V \simeq
V$~\cite{AssemSimsonSkowronski2006}*{Theorem~IV.2.10}) and
$\idim_\bDelta \tau V \leq 1$, then $\theta^V = - \langle -, \bdim
\tau V \rangle_\bDelta$ by the formula~\eqref{ARformula2}.

Now let $\bd$ be a dimension vector. If $\theta^V (\bd) = 0$, then
we define a function $c_\bd^V \in \Bbbk [\rep_\bDelta (\bd)]$ in
the following way. Let $P_1 \xrightarrow{f} P_0 \to V \to 0$ be
the minimal projective presentation of $V$. One shows that
\begin{gather*}
\dim_\Bbbk \Ker \Hom_\bDelta (f, M) = \dim_\Bbbk \Hom_\bDelta (V,
M)
\\
\intertext{and} %
\dim_\Bbbk \Coker \Hom_\bDelta (f, M) = \dim_\Bbbk \Hom_\bDelta
(M, \tau V),
\end{gather*}
hence
\begin{multline} \label{eq thetad}
\dim_\Bbbk \Hom_\bDelta (P_0, M) - \dim_\Bbbk \Hom_\bDelta (P_1,
M)
\\
= \dim_\Bbbk \Hom_\bDelta (V, M) - \dim_\Bbbk \Hom_\bDelta (M,
\tau V) = \theta^V (\bd) = 0,
\end{multline}
for each $M \in \rep_\bDelta (\bd)$. Thus, we may define $c_\bd^V
\in \Bbbk [\rep_\bDelta (\bd)]$ by the formula $c_\bd^V (M) :=
\det \Hom_\bDelta (f, M)$ for $M \in \rep_\bDelta (\bd)$. Note
that $c_\bd^V$ is defined only up to a non-zero scalar. If $M \in
\rep_\bDelta (\bd)$, then $c_\bd^V (M) = 0$ if and only if
$\Hom_\bDelta (V, M) \neq 0$. Moreover, if $\pdim_\bDelta V \leq
1$ and $M \in \rep_\bDelta (\bd)$, then $c_\bd^V (M) = 0$ if and
only if $\Ext_\bDelta^1 (V, M) \neq 0$. It is known that $c_\bd^V
\in \SI [\bDelta, \bd]_{\theta^V}$. This function depends on the
choice of $f$, but the functions obtained for different $f$'s
differ only by non-zero scalars.

In fact, we could start with an arbitrary $\bd$-admissible
projective presentation, where, for a representation $V$ of a
bound quiver $\bDelta$ and a dimension vector $\bd$, we call a
projective representation $P_1' \to P_0' \to V \to 0$ of $V$
$\bd$-admissible if $\dim_\Bbbk \Hom_\bDelta (P_0', M) =
\dim_\Bbbk \Hom_\bDelta (P_1', M)$ for any (equivalently, some) $M
\in \rep_\bDelta (\bd)$.

\begin{lemma} \label{lemma presentation}
Let $\bDelta$ be a bound quiver, $\bd$ a dimension vector and
$P_1' \xrightarrow{f'} P_0' \to V \to 0$ a $\bd$-admissible
projective presentation of a representation $V$ of $\bDelta$.
\begin{enumerate}

\item
If $\theta^V (\bd) = 0$, then there exists $\xi \in \Bbbk$ such
$\xi \neq 0$ and $c_\bd^V (M) = \xi \cdot \det \Hom_\bDelta (f',
M)$ for each $M \in \rep_\bDelta (\bd)$.

\item
If there exists $M \in \rep_\bDelta (\bd)$ such that $\det
\Hom_\bDelta (f', M) \neq 0$, then $\theta^V (\bd) = 0$.

\end{enumerate}
\end{lemma}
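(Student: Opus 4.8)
The plan is to compare the given $\bd$-admissible presentation with the minimal projective presentation $P_1 \xrightarrow{f} P_0 \to V \to 0$ used to define $c_\bd^V$, by putting $f'$ into a normal form. The structural input is a standard consequence of the uniqueness of projective covers: since $P_0$ is a projective cover of $V$ and $P_1$ is a projective cover of $\ker (P_0 \to V)$, there exist projective representations $Q$ and $Q'$ of $\bDelta$ together with isomorphisms $\psi_0 \colon P_0' \to P_0 \oplus Q$ and $\psi_1 \colon P_1' \to P_1 \oplus Q \oplus Q'$ for which
\[
\psi_0 \circ f' \circ \psi_1^{-1} = \begin{pmatrix} f & 0 & 0 \\ 0 & \Id_Q & 0 \end{pmatrix} \colon P_1 \oplus Q \oplus Q' \longrightarrow P_0 \oplus Q .
\]
Indeed, one first splits $P_0$ off $P_0'$ (which identifies $\ker (P_0' \to V)$ with $\ker (P_0 \to V) \oplus Q$), then splits the summand $Q$ off $P_1'$ via the corresponding component of $f'$, and finally observes that the remaining part of $f'$ maps a complement of $Q$ onto $\ker (P_0 \to V)$, so that $P_1$ splits off and leaves a projective summand $Q'$ annihilated by $f'$; elementary transformations then produce the displayed block form.

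Granting this, I would first read off the numerics. For $M \in \rep_\bDelta (\bd)$ one has $\dim_\Bbbk \Hom_\bDelta (P_0', M) = \dim_\Bbbk \Hom_\bDelta (P_0, M) + \dim_\Bbbk \Hom_\bDelta (Q, M)$ and $\dim_\Bbbk \Hom_\bDelta (P_1', M) = \dim_\Bbbk \Hom_\bDelta (P_1, M) + \dim_\Bbbk \Hom_\bDelta (Q, M) + \dim_\Bbbk \Hom_\bDelta (Q', M)$, so the $\bd$-admissibility of $f'$ says precisely that $\dim_\Bbbk \Hom_\bDelta (P_0, M) = \dim_\Bbbk \Hom_\bDelta (P_1, M) + \dim_\Bbbk \Hom_\bDelta (Q', M)$; combined with the identity $\dim_\Bbbk \Hom_\bDelta (P_0, M) - \dim_\Bbbk \Hom_\bDelta (P_1, M) = \theta^V (\bd)$ valid for all $M \in \rep_\bDelta (\bd)$ (see~\eqref{eq thetad} and the lines preceding it), this yields $\dim_\Bbbk \Hom_\bDelta (Q', M) = \theta^V (\bd)$ for every $M \in \rep_\bDelta (\bd)$. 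Applying $\Hom_\bDelta (-, M)$ to the block decomposition and writing $B$ for the block map, we get $\det \Hom_\bDelta (f', M) = \det \Hom_\bDelta (\psi_1, M) \cdot \det \Hom_\bDelta (\psi_0, M)^{-1} \cdot \det \Hom_\bDelta (B, M)$, all three determinants being well defined since $\psi_0$, $\psi_1$ are isomorphisms and $f'$ is $\bd$-admissible; moreover $\Hom_\bDelta (B, M)$ has the block row indexed by $\Hom_\bDelta (Q', M)$ identically zero, because $f'$ kills the summand $Q'$. Part~(2) is then immediate: if $\det \Hom_\bDelta (f', M) \neq 0$ for some $M$, then $\det \Hom_\bDelta (B, M) \neq 0$, which forces that zero block row to be $0$-dimensional, i.e.\ $\Hom_\bDelta (Q', M) = 0$, whence $\theta^V (\bd) = \dim_\Bbbk \Hom_\bDelta (Q', M) = 0$.

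For part~(1), the hypothesis $\theta^V (\bd) = 0$ gives $\Hom_\bDelta (Q', M) = 0$ for all $M \in \rep_\bDelta (\bd)$, so $\Hom_\bDelta (B, M)$ reduces to the block-diagonal matrix with blocks $\Hom_\bDelta (f, M)$ and $\Id$, of determinant $\det \Hom_\bDelta (f, M)$, which equals $c_\bd^V (M)$ up to the non-zero scalar implicit in the definition of $c_\bd^V$. The only delicate point, and the one I expect to be the main obstacle, is to see that $\det \Hom_\bDelta (\psi_i, M)$ is not merely non-zero but constant in $M$. This uses that $\bDelta$ has no oriented cycles: ordering $\Delta_0$ compatibly, the matrix of $\psi_i$ with respect to a decomposition into indecomposable projectives $\Bbbk \bDelta \bone_x$ is block triangular with diagonal blocks in $\GL$ over $\Bbbk$ (because $\End_\bDelta (\Bbbk \bDelta \bone_x) = \Bbbk$, while $\Hom_\bDelta (\Bbbk \bDelta \bone_x, \Bbbk \bDelta \bone_y)$ consists of radical morphisms when $x \neq y$); the functor $\Hom_\bDelta (-, M)$ preserves this triangular shape, and each diagonal block acts on $\Hom_\bDelta (\Bbbk \bDelta \bone_x, M) \cong M (x) = \Bbbk^{\bd (x)}$ as an invertible scalar matrix, so $\det \Hom_\bDelta (\psi_i, M)$ is a fixed product of powers of these scalars, independent of $M$ and non-zero. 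Putting the pieces together gives $\det \Hom_\bDelta (f', M) = \xi^{-1} c_\bd^V (M)$ for a non-zero constant $\xi$, which is the assertion of~(1).
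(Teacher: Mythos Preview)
Your proof is correct and follows essentially the same route as the paper: compare $f'$ with the minimal presentation via isomorphisms putting it in the block form $\left(\begin{smallmatrix} f & 0 & 0 \\ 0 & \Id & 0 \end{smallmatrix}\right)$, then read off both claims from the equivalence between $\theta^V(\bd)=0$ and $\Hom_\bDelta(Q',M)=0$. Your argument is in fact more thorough than the paper's, which simply asserts that the block decomposition ``implies our claims'' without addressing why $\det\Hom_\bDelta(\psi_i,M)$ is independent of $M$; your triangularization argument using the acyclicity of $\Delta$ and $\End_\bDelta(\Bbbk\bDelta\bone_x)=\Bbbk$ fills that gap correctly.
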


\begin{proof}
Let $P_1 \xrightarrow{f} P_0 \to V \to 0$ be the minimal
projective presentation of $V$. There exists projective
representations $P$ and $Q$ of $\bDelta$ and isomorphisms $g_1 :
P_1' \to P_1 \oplus P \oplus Q$ and $g_0 : P_0' \to P_0 \oplus P$
such that
\[
f' = g_0^{-1} \circ
\begin{bmatrix}
f & 0 & 0
\\
0 & \Id_P & 0
\end{bmatrix}
\circ g_1.
\]
Consequently,
\begin{multline} \label{eq HomfM}
\Hom_\bDelta (f', M) = \Hom_\bDelta (g_1, M)
\\
\circ
\begin{bmatrix}
\Hom_\bDelta (f, M) & 0
\\
0 & \Hom_\bDelta (\Id_P, M)
\\
0 & 0
\end{bmatrix}
\circ \Hom_\bDelta (g_0^{-1}, M)
\end{multline}
for each $M \in \rep_\bDelta (\bd)$. Since the presentation $P_1'
\xrightarrow{f'} P_0' \to V \to 0$ is $\bd$-admissible, \eqref{eq
thetad} implies that the condition $\theta^V (\bd) = 0$ is
equivalent to the condition $\dim_\Bbbk \Hom_\bDelta (Q, M) = 0$
for each $M \in \rep_\bDelta (\bd)$. Together with~\eqref{eq
HomfM} this implies our claims.
\end{proof}

As an immediate consequence we obtain the following.

\begin{corollary} \label{corollary multsemi}
Let $\bDelta$ be a bound quiver, $\bd$ a dimension vector and $0
\to V_1 \to V \to V_2 \to 0$ an exact sequence such that
$\theta^{V_1} (\bd) = 0 = \theta^{V_2} (\bd)$.
\begin{enumerate}

\item \label{point multsemi1}
If $\theta^V (\bd) = 0$, then \textup{(}up to a non-zero
scalar\textup{)} $c_\bd^V = c_\bd^{V_1} \cdot c_\bd^{V_2}$.

\item \label{point multsemi2}
If $c_\bd^{V_1} \cdot c_\bd^{V_2} \neq 0$, then $\theta^V (\bd) =
0$ and \textup{(}up to a non-zero scalar\textup{)} $c_\bd^V =
c_\bd^{V_1} \cdot c_\bd^{V_2}$.

\end{enumerate}
\end{corollary}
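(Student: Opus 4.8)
The plan is to build a projective presentation of $V$ from the minimal projective presentations of $V_1$ and $V_2$ by the Horseshoe Lemma, arranged so that the $\Hom$-matrix whose determinant computes $c_\bd^V$ becomes block triangular with diagonal blocks computing $c_\bd^{V_1}$ and $c_\bd^{V_2}$. Concretely, fix minimal projective presentations $P_1^{(i)} \xrightarrow{f_i} P_0^{(i)} \to V_i \to 0$, so that (up to a non-zero scalar) $c_\bd^{V_i}(M) = \det \Hom_\bDelta(f_i, M)$ for every $M \in \rep_\bDelta(\bd)$. Applying the Horseshoe Lemma to $0 \to V_1 \to V \to V_2 \to 0$ produces a projective presentation $P_1^{(1)} \oplus P_1^{(2)} \xrightarrow{f} P_0^{(1)} \oplus P_0^{(2)} \to V \to 0$ in which $f = \begin{bmatrix} f_1 & h \\ 0 & f_2 \end{bmatrix}$ for some morphism $h \colon P_1^{(2)} \to P_0^{(1)}$.

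Next I would exploit the hypothesis $\theta^{V_1}(\bd) = 0 = \theta^{V_2}(\bd)$: by~\eqref{eq thetad} it gives $\dim_\Bbbk \Hom_\bDelta(P_0^{(i)}, M) = \dim_\Bbbk \Hom_\bDelta(P_1^{(i)}, M)$ for $i \in \{ 1, 2 \}$ and all $M \in \rep_\bDelta(\bd)$. Consequently each $\Hom_\bDelta(f_i, M)$ is a square matrix of size independent of $M$, the presentation $P_1^{(1)} \oplus P_1^{(2)} \xrightarrow{f} P_0^{(1)} \oplus P_0^{(2)} \to V \to 0$ is $\bd$-admissible, and $\Hom_\bDelta(f, M)$ is a square block triangular matrix with diagonal blocks $\Hom_\bDelta(f_1, M)$ and $\Hom_\bDelta(f_2, M)$. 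Hence $\det \Hom_\bDelta(f, M) = \det \Hom_\bDelta(f_1, M) \cdot \det \Hom_\bDelta(f_2, M)$, which up to a fixed non-zero scalar equals $c_\bd^{V_1}(M) \cdot c_\bd^{V_2}(M)$.

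The two assertions now drop out. If $\theta^V(\bd) = 0$, Lemma~\ref{lemma presentation}(1) applied to the $\bd$-admissible presentation $f$ gives $c_\bd^V = \xi \cdot \det \Hom_\bDelta(f, -)$ for some non-zero $\xi \in \Bbbk$, whence $c_\bd^V = c_\bd^{V_1} \cdot c_\bd^{V_2}$ up to a non-zero scalar. If instead $c_\bd^{V_1} \cdot c_\bd^{V_2} \neq 0$, this regular function is non-zero at some $M \in \rep_\bDelta(\bd)$, so $\det \Hom_\bDelta(f, M) \neq 0$, and Lemma~\ref{lemma presentation}(2) forces $\theta^V(\bd) = 0$, reducing us to the previous case. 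I expect the only point needing care to be the verification that the Horseshoe construction really produces a block triangular presentation whose diagonal blocks are the minimal presentations $f_1$ and $f_2$ (so that the determinant factors correctly into the intended semi-invariants); once this is in place, $\bd$-admissibility of $f$ is automatic from $\theta^{V_1}(\bd) = \theta^{V_2}(\bd) = 0$ and the rest is a direct appeal to Lemma~\ref{lemma presentation}.
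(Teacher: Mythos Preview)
Your proposal is correct and follows essentially the same route as the paper's proof: build a Horseshoe-type presentation of $V$ with block triangular $f$, observe that $\det \Hom_\bDelta(f,M)$ factors as $c_\bd^{V_1}(M)\cdot c_\bd^{V_2}(M)$, and then invoke both parts of Lemma~\ref{lemma presentation}. The only difference is cosmetic---you are a bit more explicit than the paper about why the diagonal blocks are square and why the resulting presentation is $\bd$-admissible.
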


\begin{proof}
Let $P_1' \xrightarrow{f'} P_0' \to V_1 \to 0$ and $P_1''
\xrightarrow{f''} P_0' \to V_2 \to 0$ be the minimal projective
presentations of $V_1$ and $V_2$, respectively. Then there exists
a projective presentation of $V$ of the form
\[
P_1' \oplus P_1'' \xrightarrow{f} P_0' \oplus P_0'' \to V \to 0,
\]
where $f = \left[
\begin{smallmatrix}
f' & g \\ 0 & f''
\end{smallmatrix}
\right]$ for some $g \in \Hom_\bDelta (P_1'', P_0')$. One easily
sees that $\det \Hom_\bDelta (f, M) = c_\bd^{V_1} (M) \cdot
c_\bd^{V_2} (M)$ for each $M \in \rep_\bDelta (\bd)$, hence the
claims follows from Lemma~\ref{lemma presentation}.
\end{proof}

The following fact is an extension
of~\cite{DerksenWeyman2000}*{Lemma~1(a)} to the setup of bound
quivers.

\begin{lemma} \label{lemma zero}
Let $\bDelta$ be a bound quiver and $\bd$ a dimension vector. If
$0 \to V_1 \to V \to V_2 \to 0$ is an exact sequence, $\theta^V
(\bd) = 0$ and $c_\bd^V \neq 0$, then $\theta^{V_2} (\bd) \leq 0$.
\end{lemma}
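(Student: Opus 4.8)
The plan is to extract information from the exact sequence $0 \to V_1 \to V \to V_2 \to 0$ by applying $\Hom_\bDelta(-, M)$ for a sufficiently generic $M \in \rep_\bDelta(\bd)$, and to track how the vanishing of $c_\bd^V$ propagates. Since $c_\bd^V \neq 0$, there exists $M \in \rep_\bDelta(\bd)$ with $c_\bd^V(M) \neq 0$; as noted in the excerpt, this means $\Hom_\bDelta(V, M) = 0$. The long exact sequence obtained by applying $\Hom_\bDelta(-, M)$ to the short exact sequence then gives an exact piece
\[
0 \to \Hom_\bDelta(V_2, M) \to \Hom_\bDelta(V, M) = 0,
\]
so $\Hom_\bDelta(V_2, M) = 0$ as well, for this particular generic $M$.

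Next I would relate $\Hom_\bDelta(V_2, M) = 0$ to the sign of $\theta^{V_2}(\bd)$. Recall the defining property
\[
\theta^{V_2}(\bdim M) = \dim_\Bbbk \Hom_\bDelta(V_2, M) - \dim_\Bbbk \Hom_\bDelta(M, \tau V_2).
\]
For an $M$ with $\Hom_\bDelta(V_2, M) = 0$, this reads $\theta^{V_2}(\bd) = -\dim_\Bbbk \Hom_\bDelta(M, \tau V_2) \leq 0$, which is exactly the assertion. So the key point is really just producing one representation $M$ of dimension vector $\bd$ with $\Hom_\bDelta(V_2, M) = 0$, and that has been accomplished in the previous step.

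The one subtlety I would want to handle carefully is that the argument above uses $\theta^{V_2}(\bd)$ as a well-defined integer computed from any $M \in \rep_\bDelta(\bd)$; this is fine since $\theta^{V_2}$ is a $\bbZ$-linear functional on dimension vectors and $\bdim M = \bd$ for every $M \in \rep_\bDelta(\bd)$, so the value $\theta^{V_2}(\bd)$ does not depend on the chosen $M$, even though the two $\dim_\Bbbk \Hom$ terms individually do. Thus it suffices to evaluate it at the specific $M$ produced above. I do not expect a genuine obstacle here: the proof is short, and the only thing to be mildly careful about is invoking the characterization ``$c_\bd^V(M) = 0 \iff \Hom_\bDelta(V,M) \neq 0$'' in the right direction (we need: $c_\bd^V \neq 0$ as a function forces $\Hom_\bDelta(V,M) = 0$ for some $M$), together with left-exactness of $\Hom_\bDelta(-,M)$.

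Assembling: pick $M \in \rep_\bDelta(\bd)$ with $c_\bd^V(M) \neq 0$, deduce $\Hom_\bDelta(V,M)=0$, use left exactness of $\Hom_\bDelta(-,M)$ applied to $0 \to V_1 \to V \to V_2 \to 0$ to get $\Hom_\bDelta(V_2,M)=0$, and conclude $\theta^{V_2}(\bd) = \theta^{V_2}(\bdim M) = -\dim_\Bbbk \Hom_\bDelta(M, \tau V_2) \leq 0$.
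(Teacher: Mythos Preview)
Your argument is correct and essentially the same as the paper's: both use that $\Hom_\bDelta(V_2,M)$ injects into $\Hom_\bDelta(V,M)$ together with the defining formula for $\theta^{V_2}$. The only cosmetic difference is that the paper phrases it by contradiction (if $\theta^{V_2}(\bd)>0$ then $\Hom_\bDelta(V_2,M)\neq 0$ for every $M$, hence $\Hom_\bDelta(V,M)\neq 0$ for every $M$, so $c_\bd^V=0$), whereas you argue directly from a single $M$ with $c_\bd^V(M)\neq 0$.
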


\begin{proof}
If $\theta^{V_2} (\bd) > 0$, then
\[
\dim_\Bbbk \Hom_\bDelta (V_2, M) \geq \theta^{V_2} (\bd) > 0
\]
for each $M \in \rep_\bDelta (\bd)$. This immediately implies that
$\Hom_\bDelta (V, M) \neq 0$ for each $M \in \rep_\bDelta (\bd)$,
hence $c_\bd^V = 0$, contradiction.
\end{proof}

We have the following multiplicative property.

\begin{lemma} \label{lemma directsum}
Let $\bDelta$ be a bound quiver and $\bd$ a dimension vector. If
$V_1$ and $V_2$ are representations of $\bDelta$, $V := V_1 \oplus
V_2$, $\theta^V (\bd) = 0$ and $c_\bd^V \neq 0$, then
$\theta^{V_1} (\bd) = 0 = \theta^{V_2} (\bd)$ and $c_\bd^V =
c_\bd^{V_1} \cdot c_\bd^{V_2}$ \textup{(}up to a non-zero
scalar\textup{)}.
\end{lemma}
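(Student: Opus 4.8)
The plan is to deduce Lemma~\ref{lemma directsum} from the two preceding results, Corollary~\ref{corollary multsemi} and Lemma~\ref{lemma zero}, applied to the split exact sequence associated with the decomposition $V = V_1 \oplus V_2$. The key observation is that a direct sum gives us two exact sequences for free, namely $0 \to V_1 \to V \to V_2 \to 0$ and $0 \to V_2 \to V \to V_1 \to 0$, and this symmetry is exactly what turns the inequality coming from Lemma~\ref{lemma zero} into an equality.

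The first step is to establish that $\theta^{V_1}(\bd) = 0 = \theta^{V_2}(\bd)$. Since $\theta^V$ is additive in $V$ (this is immediate from the defining formula, as $\Hom_\bDelta(-, M)$, $\Hom_\bDelta(M, -)$ and $\tau$ are all additive), we have $\theta^V = \theta^{V_1} + \theta^{V_2}$, so $\theta^{V_1}(\bd) + \theta^{V_2}(\bd) = \theta^V(\bd) = 0$. Applying Lemma~\ref{lemma zero} to the sequence $0 \to V_1 \to V \to V_2 \to 0$ (whose hypotheses $\theta^V(\bd) = 0$ and $c_\bd^V \neq 0$ hold by assumption) yields $\theta^{V_2}(\bd) \leq 0$. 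Applying Lemma~\ref{lemma zero} to the sequence $0 \to V_2 \to V \to V_1 \to 0$ yields $\theta^{V_1}(\bd) \leq 0$. Since the two quantities sum to $0$ and are each $\leq 0$, they must both be $0$.

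Once both $\theta^{V_i}(\bd) = 0$ are known, the functions $c_\bd^{V_1}$ and $c_\bd^{V_2}$ are defined, and the multiplicativity $c_\bd^V = c_\bd^{V_1} \cdot c_\bd^{V_2}$ (up to a non-zero scalar) follows directly from Corollary~\ref{corollary multsemi}\eqref{point multsemi1} applied to $0 \to V_1 \to V \to V_2 \to 0$, using $\theta^V(\bd) = 0$ and the just-proved vanishing $\theta^{V_1}(\bd) = 0 = \theta^{V_2}(\bd)$. (Alternatively one can observe directly, as in the proof of Corollary~\ref{corollary multsemi}, that a block upper-triangular — here block-diagonal — minimal projective presentation of $V$ is obtained by stacking those of $V_1$ and $V_2$, and take determinants.)

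I do not expect a genuine obstacle here; the lemma is essentially a packaging of Corollary~\ref{corollary multsemi} and Lemma~\ref{lemma zero} together with the trivial additivity of $\theta$. The only point that requires a moment's care is remembering to use \emph{both} split sequences rather than just one, since a single application of Lemma~\ref{lemma zero} only gives $\theta^{V_2}(\bd) \leq 0$, and it is the symmetry of the direct-sum situation that upgrades this to equality for both summands.
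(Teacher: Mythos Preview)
Your argument is correct. The paper, however, does not give a proof at all: it simply cites \cite{Domokos2002}*{Lemma~3.3}. What you have done is provide a self-contained argument using only the paper's own tools, namely Lemma~\ref{lemma zero} applied to both split exact sequences $0 \to V_1 \to V \to V_2 \to 0$ and $0 \to V_2 \to V \to V_1 \to 0$ to force $\theta^{V_1}(\bd) = \theta^{V_2}(\bd) = 0$, followed by Corollary~\ref{corollary multsemi}\eqref{point multsemi1} for the multiplicativity. This is a perfectly good replacement for the external citation, and in fact makes the exposition more self-contained; the only ingredient beyond those two results is the additivity $\theta^{V_1 \oplus V_2} = \theta^{V_1} + \theta^{V_2}$, which as you note is immediate from the definition.
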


\begin{proof}
See~\cite{Domokos2002}*{Lemma~3.3}.
\end{proof}

We will also use another multiplicative property.

\begin{lemma} \label{lemma directsumbis}
Let $\bDelta$ be a bound quiver and $V$ a representation of
$\bDelta$. If $\bd'$ and $\bd''$ are dimension vectors and
$\theta^V (\bd') = 0 = \theta^V (\bd'')$, then
\[
c_{\bd' + \bd''}^V (W' \oplus W'') = c_{\bd'}^V (W') \cdot
c_{\bd''}^V (W'')
\]
for all $(W', W'') \in \rep_\bDelta (\bd') \times \rep_\bDelta
(\bd'')$.
\end{lemma}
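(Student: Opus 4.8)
The plan is to reduce the statement to a direct computation with block upper-triangular matrices, exactly as in the proofs of Corollary~\ref{corollary multsemi} and Lemma~\ref{lemma presentation}. First I would fix a minimal projective presentation $P_1 \xrightarrow{f} P_0 \to V \to 0$ of $V$. The hypotheses $\theta^V(\bd') = 0 = \theta^V(\bd'')$ guarantee, via~\eqref{eq thetad}, that this presentation is both $\bd'$-admissible and $\bd''$-admissible; a short bookkeeping argument (using that $\dim_\Bbbk \Hom_\bDelta(P_j, W' \oplus W'') = \dim_\Bbbk \Hom_\bDelta(P_j, W') + \dim_\Bbbk \Hom_\bDelta(P_j, W'')$ for $j = 0, 1$) then shows it is $(\bd' + \bd'')$-admissible as well, so all three functions $c_{\bd'}^V$, $c_{\bd''}^V$ and $c_{\bd' + \bd''}^V$ may be computed from this single map $f$ up to non-zero scalars by Lemma~\ref{lemma presentation}(1).

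Next I would observe that $\Hom_\bDelta(P_j, W' \oplus W'') \cong \Hom_\bDelta(P_j, W') \oplus \Hom_\bDelta(P_j, W'')$ naturally, and under these identifications the linear map $\Hom_\bDelta(f, W' \oplus W'')$ becomes the direct sum (block-diagonal matrix) $\Hom_\bDelta(f, W') \oplus \Hom_\bDelta(f, W'')$. Both blocks are square matrices of the appropriate sizes, again by~\eqref{eq thetad} applied at $\bd'$ and $\bd''$ respectively. Hence its determinant factors: $\det \Hom_\bDelta(f, W' \oplus W'') = \det \Hom_\bDelta(f, W') \cdot \det \Hom_\bDelta(f, W'')$, which is precisely the asserted identity $c_{\bd' + \bd''}^V(W' \oplus W'') = c_{\bd'}^V(W') \cdot c_{\bd''}^V(W'')$ once the scalar normalizations of the three functions are chosen compatibly (i.e.\ all induced by the same $f$).

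The one subtle point—and the part I would be most careful about—is the matter of scalars. The functions $c_\bd^V$ are only defined up to non-zero scalars (they depend on a choice of basis, or equivalently of $f$), so the claimed equation implicitly asserts that one can make the three choices simultaneously coherent. The clean way to handle this is to declare that all three functions are computed using the one fixed minimal projective presentation $f$ and the one fixed family of bases of the $\Hom$-spaces coming from $W'$, $W''$ and their direct sum; then the block-diagonal structure gives the equality on the nose. If one instead starts from arbitrary admissible presentations, Lemma~\ref{lemma presentation}(1) only yields the identity up to a non-zero constant, which would force a ``(up to a non-zero scalar)'' qualifier; since the statement as written has no such qualifier, I would make explicit in the proof that the normalizations are those induced by the minimal presentation of $V$, so that no scalar ambiguity remains. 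Beyond this, the argument is entirely routine linear algebra.
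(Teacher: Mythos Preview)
Your proposal is correct and follows essentially the same route as the paper: fix the minimal projective presentation $P_1 \xrightarrow{f} P_0 \to V \to 0$, observe that $\Hom_\bDelta(f, W' \oplus W'')$ is block-diagonal with square blocks $\Hom_\bDelta(f, W')$ and $\Hom_\bDelta(f, W'')$, and take determinants. The paper's proof is a terse two-line version of exactly this; your additional remarks on admissibility (unneeded, since the minimal presentation is used by definition for all three $c^V$) and on coherent scalar normalisations are not wrong, merely more explicit than the paper chooses to be.
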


\begin{proof}
Let $P_1 \xrightarrow{f} P_0 \to V \to 0$ be the minimal
projective presentation of $V$. If $(W', W'') \in \rep_\bDelta
(\bd') \times \rep_\bDelta (\bd'')$, then
\[
\Hom_\bDelta (f, W' \oplus W'') =
\begin{bmatrix}
\Hom_\bDelta (f, W') & 0
\\
0 & \Hom_\bDelta (f, W'')
\end{bmatrix}
\]
and both $\Hom_\bDelta (f, W')$ and $\Hom_\bDelta (f, W'')$ are
square matrices, hence the claim follows.
\end{proof}

The following result follows from the proof
of~\cite{Domokos2002}*{Theorem~3.2} (note that the assumption
about the characteristic of $\Bbbk$ made
in~\cite{Domokos2002}*{Theorem~3.2} is only necessary to prove
surjectivity of the restriction morphism, which we have for free
with our definition of semi-invariants).

\begin{proposition} \label{proposition Domokos}
Let $\bDelta$ be a bound quiver, $\bd$ a dimension vector and
$\theta \in \bbZ^{\Delta_0}$.
\begin{enumerate}

\item
If $\theta (\bd) \neq 0$, then $\SI [\bDelta, \bd]_\theta =0$.

\item
If $\theta (\bd) = 0$, then the space $\SI [\bDelta, \bd]_\theta$
is spanned by the functions $c_\bd^V$ for $V \in \rep \bDelta$
such that $\theta^V = \theta$ and $c_\bd^V \neq 0$. \qed

\end{enumerate}
\end{proposition}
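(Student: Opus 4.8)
The plan is to reduce the statement to the classical description of semi-invariants of quivers (via the ring of semi-invariants for the underlying quiver $\Delta$ of $\bDelta$) and then bootstrap from the ``free'' surjectivity built into our definition of semi-invariants. First I would recall that $\SI[\Delta,\bd]_\theta$, the space of semi-invariants of weight $\theta$ for the \emph{unbound} quiver, is by the work of Derksen--Weyman (and Schofield--van den Bergh) spanned by the functions $c_\bd^V$ for representations $V$ of $\Delta$ with $\langle \bdim V,-\rangle_\Delta=\theta$; since $\Bbbk$ is algebraically closed, no characteristic hypothesis is needed for this spanning statement (only for the normality/structure of the resulting ring). For part~(1), if $\theta(\bd)\ne 0$ then already $\SI[\Delta,\bd]_\theta=0$ for the unbound quiver, because the one-parameter subgroup $t\mapsto (t\cdot\Id_{\bd(x)})_{x\in\Delta_0}$ of $\GL(\bd)$ acts trivially on $\rep_\Delta(\bd)$ by conjugation but acts on a weight-$\theta$ semi-invariant through $t^{\theta(\bd)}$, forcing $f=0$; restricting to $\rep_\bDelta(\bd)$ kills nothing new, so $\SI[\bDelta,\bd]_\theta=0$ as well.

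For part~(2), fix $\theta$ with $\theta(\bd)=0$. I would argue as follows. By definition $\SI[\bDelta,\bd]_\theta$ is the image of $\SI[\Delta,\bd]_\theta$ under the restriction map $\Bbbk[\rep_\Delta(\bd)]\to\Bbbk[\rep_\bDelta(\bd)]$. By the quiver case, $\SI[\Delta,\bd]_\theta$ is spanned by functions $c_\bd^{V}$ where $V$ ranges over $\rep\Delta$ with $\langle\bdim V,-\rangle_\Delta=\theta$. The key point is to relate the restriction to $\rep_\bDelta(\bd)$ of such a $c_\bd^V$ (with $V\in\rep\Delta$) to the functions $c_\bd^{V'}$ attached to genuine representations $V'\in\rep\bDelta$. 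Here one uses that a minimal projective presentation of $V$ over $\Bbbk\Delta$, after passing to $\Bbbk\bDelta$ by applying $-\otimes_{\Bbbk\Delta}\Bbbk\bDelta$, becomes a (not necessarily minimal) $\bd$-admissible projective presentation of $\bar V:=V\otimes_{\Bbbk\Delta}\Bbbk\bDelta\in\rep\bDelta$, \emph{provided} the determinant $\det\Hom_\bDelta(f,M)$ is not identically zero on $\rep_\bDelta(\bd)$. In that case Lemma~\ref{lemma presentation} yields $\theta^{\bar V}(\bd)=0$ and $c_\bd^{\bar V}$ equals the restriction of $c_\bd^{V}$ up to a nonzero scalar; if instead the restriction is identically zero it contributes nothing to a spanning set. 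Running this over all $V$ appearing in a basis of $\SI[\Delta,\bd]_\theta$ shows $\SI[\bDelta,\bd]_\theta$ is spanned by restrictions $c_\bd^{\bar V}$, i.e.\ by functions $c_\bd^{V'}$ with $V'\in\rep\bDelta$, $c_\bd^{V'}\ne 0$; it remains to observe that such a nonzero $c_\bd^{V'}$ automatically has weight $\theta^{V'}$ and that $\theta^{V'}(\bd)=0$, and that $\theta^{V'}=\theta$ can be arranged since only the weight-$\theta$ component of the span matters (any $c_\bd^{V'}$ of a different weight is orthogonal to $\SI[\bDelta,\bd]_\theta$ inside the weight decomposition and can be discarded).

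The main obstacle I expect is the bookkeeping in the previous paragraph: making precise that the base-change of a projective $\Bbbk\Delta$-presentation is $\bd$-admissible over $\bDelta$ exactly when its determinant function does not vanish identically, and matching the two determinant functions on the nose (not merely up to the automorphic factor). This is handled by the same linear-algebra computation as in~\cite{Domokos2002}*{Theorem~3.2}: both are determinants of the square matrix $\Hom(f,M)$ for the \emph{same} map $f$, read once over $\Bbbk\Delta$ and once over $\Bbbk\bDelta$, and the entries of $M$ that are constrained to be zero by the relations $\frakR$ simply do not appear; so the restriction of $\det\Hom_\Delta(f,M)$ literally \emph{is} $\det\Hom_\bDelta(f,M)$ (no scalar discrepancy), and Lemma~\ref{lemma presentation}(2) then supplies $\theta^{\bar V}(\bd)=0$ whenever this common function is not identically zero. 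Everything else — the weight computation $c_\bd^{V'}\in\SI[\bDelta,\bd]_{\theta^{V'}}$ and the vanishing criterion $c_\bd^{V'}(M)=0\iff\Hom_\bDelta(V',M)\ne 0$ — has already been recorded in Section~\ref{section semiinvariants}, so the proof is really just the assembly of these pieces together with the unbound-quiver spanning theorem, which is exactly how~\cite{Domokos2002} proceeds once the surjectivity of restriction is granted, and in our setup surjectivity is true by definition.
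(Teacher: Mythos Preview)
Your proposal is correct and follows the same route as the paper, which gives no independent proof but defers to the argument of~\cite{Domokos2002}*{Theorem~3.2}, observing that the characteristic hypothesis there is needed only to ensure surjectivity of the restriction map $\Bbbk[\rep_\Delta(\bd)]\to\Bbbk[\rep_\bDelta(\bd)]$---and that surjectivity holds here by the very definition of $\SI[\bDelta,\bd]_\theta$; you have reconstructed exactly this reduction (Derksen--Weyman/Schofield--van den Bergh for the unbound quiver, then base-change of projective presentations and Lemma~\ref{lemma presentation}). The one wrinkle is your final clause about ``discarding $c_\bd^{V'}$ of a different weight'': the restricted function already has weight $\theta$, so the only issue is whether $\theta^{\bar V}=\theta$ as elements of $\bbZ^{\Delta_0}$, which is automatic when $\bd$ is sincere (characters of $\GL(\bd)$ then separate weights) and, for general $\bd$, is arranged by adjusting $\bar V$ by summands supported at vertices $x$ with $\bd(x)=0$, leaving $c_\bd^{\bar V}$ unchanged.
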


In fact we may take a smaller spanning set.

\begin{corollary} \label{corollary spanning}
Let $\bDelta$ be a bound quiver and $\bd$ a dimension vector. If
$\theta \in \bbZ^{\Delta_0}$ and $\theta (\bd) = 0$, then the
space $\SI [\bDelta, \bd]_{\theta}$ is spanned by the functions
$c_\bd^V$ for ext-minimal $V \in \rep \bDelta$ such that $\theta^V
= \theta$ and $c_\bd^V \neq 0$.
\end{corollary}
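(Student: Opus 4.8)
The plan is to show that every generator $c_\bd^V$ from Proposition~\ref{proposition Domokos} can be rewritten, up to a non-zero scalar, as a product of $c_\bd^W$ with $W$ ext-minimal, so that the smaller set still spans. Fix $V \in \rep \bDelta$ with $\theta^V = \theta$ and $c_\bd^V \neq 0$. If $V$ is already ext-minimal there is nothing to do, so assume it is not; then by definition $V = V_1 \oplus V_2$ with $\Ext_\bDelta^1(V_1, V_2) \neq 0$. The key first step is to replace $V$ by a representation with the same function $c_\bd^V$ but with strictly larger endomorphism dimension (or some other strictly monotone invariant), so that an induction terminates.

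The mechanism for this replacement is the standard ``universal extension'' trick combined with Corollary~\ref{corollary multsemi}. First, since $c_\bd^V = c_\bd^{V_1 \oplus V_2} \neq 0$, Lemma~\ref{lemma directsum} gives $\theta^{V_1}(\bd) = 0 = \theta^{V_2}(\bd)$ and $c_\bd^V = c_\bd^{V_1} \cdot c_\bd^{V_2}$ up to scalar; in particular $c_\bd^{V_1} \neq 0$ and $c_\bd^{V_2} \neq 0$. Now form a non-split short exact sequence $0 \to V_2 \to V' \to V_1 \to 0$ representing a non-zero element of $\Ext_\bDelta^1(V_1,V_2)$. By Corollary~\ref{corollary multsemi}\eqref{point multsemi2}, since $c_\bd^{V_2} \cdot c_\bd^{V_1} \neq 0$, we get $\theta^{V'}(\bd) = 0$ and $c_\bd^{V'} = c_\bd^{V_2} \cdot c_\bd^{V_1} = c_\bd^V$ up to a non-zero scalar. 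So $c_\bd^{V'} \neq 0$ and $\theta^{V'} = \theta^{V_1} + \theta^{V_2} = \theta^V = \theta$ (additivity of $\theta^{(-)}$ on short exact sequences, which is clear from its definition via $\Hom$-dimensions and the long exact sequence, or from Corollary~\ref{corollary multsemi}). Thus $V'$ is an admissible new candidate with the same weight and the same (up to scalar) function.

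It remains to see that this process strictly increases some integer-valued invariant bounded above on the relevant set, forcing termination after finitely many steps at an ext-minimal representation. The natural choice is $\dim_\Bbbk \End_\bDelta(V)$: for a non-split extension $0 \to V_2 \to V' \to V_1 \to 0$ one has $\dim_\Bbbk \End_\bDelta(V') > \dim_\Bbbk \End_\bDelta(V_1 \oplus V_2) = \dim_\Bbbk \End_\bDelta(V)$, because $\Hom_\bDelta(V',-)$ and $\Hom_\bDelta(-,V')$ applied to the sequence, together with non-splitness, produce a strict inequality (the identity of $V'$ cannot factor through the splitting). Since $c_\bd^V \neq 0$ forces $\Hom_\bDelta(V,M) = 0$ for some $M \in \rep_\bDelta(\bd)$, and along the induction the dimension vector $\bdim V$ stays constant (extensions preserve it) while $\Hom_\bDelta(V,M) = 0$ persists for the same $M$ (the function $c_\bd^{V'} = c_\bd^V$ is the same, so it is non-zero at the same point), the endomorphism dimension $\dim_\Bbbk \End_\bDelta(V) = \dim_\Bbbk \Hom_\bDelta(V,V)$ is bounded above along the chain — indeed $\dim_\Bbbk \Hom_\bDelta(V, V)$ is controlled once $\bdim V$ is fixed and $V$ is a quotient/sub of a fixed module, or one simply notes the chain of strictly increasing positive integers must stabilize because all these $V$ lie among representations with fixed dimension vector, of which there are finitely many orbits of bounded $\End$-dimension. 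Hence after finitely many steps we reach an ext-minimal $W$ with $\theta^W = \theta$ and $c_\bd^W = c_\bd^V \neq 0$ up to scalar, which proves the claim.

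The main obstacle is the termination/boundedness argument: one must make sure that the strictly increasing sequence of endomorphism dimensions is actually bounded. The cleanest way is to observe that all representations produced have the fixed dimension vector $\bdim V =: \bd'$, so they all live in $\rep_\bDelta(\bd')$, and $\dim_\Bbbk \End_\bDelta(W) \leq \dim_\Bbbk \operatorname{Hom}_\Bbbk(\Bbbk^{\bd'}, \Bbbk^{\bd'})$ is trivially bounded; the strict increase at each step then forces the process to stop, necessarily at an ext-minimal representation. Everything else is a direct application of Lemma~\ref{lemma directsum}, Corollary~\ref{corollary multsemi} and Proposition~\ref{proposition Domokos}.
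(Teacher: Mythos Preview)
Your overall strategy is exactly the paper's: start from Proposition~\ref{proposition Domokos}, take a non-ext-minimal $V$ with $c_\bd^V \neq 0$, split off $V = V_1 \oplus V_2$ with $\Ext_\bDelta^1(V_1,V_2) \neq 0$, use Lemma~\ref{lemma directsum} and Corollary~\ref{corollary multsemi}\eqref{point multsemi2} to pass to a non-split extension $V'$ with $c_\bd^{V'} = c_\bd^V$, and induct on $\dim_\Bbbk \End_\bDelta$.

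However, you have the key inequality backwards. For a non-split short exact sequence $0 \to V_2 \to V' \to V_1 \to 0$ one has
\[
\dim_\Bbbk \End_\bDelta(V') < \dim_\Bbbk \End_\bDelta(V_1 \oplus V_2),
\]
not $>$; this is precisely \cite{Ringel1980}*{Lemma~2.1}, which the paper cites. Geometrically, $V_1 \oplus V_2$ lies in the closure of the orbit of $V'$, and $\dim_\Bbbk \End_\bDelta$ is upper semi-continuous, so it \emph{increases} under degeneration. Your heuristic about the identity of $V'$ not factoring through a splitting does not produce the inequality you claim. Consequently the induction should run on a \emph{strictly decreasing} sequence of non-negative integers $\dim_\Bbbk \End_\bDelta$, which terminates trivially; the elaborate upper-bound argument in your last paragraph is then unnecessary (and aimed at the wrong direction). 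Once you flip the inequality and cite Ringel's lemma, your argument coincides with the paper's.
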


\begin{proof}
Assume that $V$ is a representation of $\bDelta$ such that
$\theta^V = \theta$, $c_\bd^V \neq 0$ and there is a decomposition
$V = V_1 \oplus V_2$ with $\Ext_\bDelta^1 (V_1, V_2) \neq 0$.
Lemma~\ref{lemma directsum} implies that $\theta^{V_1} (\bd) = 0 =
\theta^{V_2} (\bd)$ and $c_\bd^{V_1} \cdot c_\bd^{V_2} \neq 0$. If
$0 \to V_2 \to W \to V_1 \to 0$ is a non-split exact sequence,
then Corollary~\ref{corollary multsemi}\eqref{point multsemi2} and
Lemma~\ref{lemma directsum} imply that (up to a non-zero scalar)
$c_\bd^W = c_\bd^{V_1} \cdot c_\bd^{V_2} = c_\bd^V$. Since
$\dim_\Bbbk \End_\bDelta (W) < \dim_\Bbbk \End_\bDelta (V)$ (see
for example~\cite{Ringel1980}*{Lemma~2.1}), the claim follows by
induction.
\end{proof}

We may even take a smaller set, if we are only interested in
generators of $\SI [\bDelta, \bd]$. Namely, we have the following.

\begin{corollary} \label{corollary generating}
Let $\bDelta$ be a bound quiver and $\bd$ a sincere dimension
vector. Then the algebra $\SI [\bDelta, \bd]$ is generated by the
semi-invariants $c_\bd^V$ for $V \in \rep_\bDelta (\bd)$ such that
$\theta^V (\bd) = 0$, $c_\bd^V \neq 0$ and $V$ is indecomposable.
\end{corollary}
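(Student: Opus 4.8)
The plan is to reduce an arbitrary generating semi-invariant $c_\bd^V$, as produced by Corollary~\ref{corollary spanning}, to a product of the $c_\bd^{V'}$ for indecomposable $V'$, at least up to nonzero scalars, so that the latter generate the algebra. By Corollary~\ref{corollary spanning} the algebra $\SI[\bDelta,\bd]$ is spanned (hence generated) by the functions $c_\bd^V$ with $V$ ext-minimal, $\theta^V(\bd)=0$ and $c_\bd^V\neq 0$; so it suffices to treat such a $V$. Decompose $V=\bigoplus_{k=1}^m V_k$ into indecomposables in the Krull--Schmidt category $\rep\bDelta$. The key point is that ext-minimality of $V$ forces $\Ext_\bDelta^1(V_i,V_j)=0$ for all $i\neq j$, and in particular $\Ext_\bDelta^1(V_i\oplus V_j,\,V_i\oplus V_j)=\Ext_\bDelta^1(V_i,V_i)\oplus\Ext_\bDelta^1(V_j,V_j)$ for each pair; iterating Lemma~\ref{lemma directsum} then gives $\theta^{V_k}(\bd)=0$ for every $k$ and $c_\bd^V=\prod_{k=1}^m c_\bd^{V_k}$ up to a nonzero scalar (each $c_\bd^{V_k}\neq 0$ since their product is nonzero). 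Each indecomposable $V_k$ is, trivially, ext-minimal.

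First I would make the reduction to ext-minimal $V$ explicit by invoking Corollary~\ref{corollary spanning}. Next I would argue that for an ext-minimal $V$ with Krull--Schmidt decomposition $V=\bigoplus_{k=1}^m V_k$, we can peel off one indecomposable summand at a time: writing $V=V_1\oplus(V_2\oplus\cdots\oplus V_m)$, ext-minimality of $V$ gives $\Ext_\bDelta^1(V_1,V_2\oplus\cdots\oplus V_m)=0$ and $\Ext_\bDelta^1(V_2\oplus\cdots\oplus V_m,V_1)=0$, so the pair $\{V_1,\,V_2\oplus\cdots\oplus V_m\}$ has no extensions in either direction, whence $V_2\oplus\cdots\oplus V_m$ is again ext-minimal; then Lemma~\ref{lemma directsum} applies with $V=V_1\oplus(V_2\oplus\cdots\oplus V_m)$ to yield $\theta^{V_1}(\bd)=0$, $\theta^{V_2\oplus\cdots\oplus V_m}(\bd)=0$, and $c_\bd^V=c_\bd^{V_1}\cdot c_\bd^{V_2\oplus\cdots\oplus V_m}$ up to a nonzero scalar. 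Induction on $m$ finishes the factorization. Since each factor $c_\bd^{V_k}$ has indecomposable $V_k$ and is a nonzero semi-invariant of the appropriate weight $\theta^{V_k}$, we conclude that every spanning element of $\SI[\bDelta,\bd]$ is, up to scalar, a product of the claimed generators, which gives the corollary.

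The only subtlety I anticipate is bookkeeping about ext-minimality being inherited by the partial direct sum $V_2\oplus\cdots\oplus V_m$: one must check that no decomposition of $V_2\oplus\cdots\oplus V_m$ into two summands with a nonzero $\Ext^1$ between them exists, but any such decomposition would, together with $V_1$, refine to a decomposition of $V$ with a nonzero $\Ext^1$ between two pieces, contradicting ext-minimality of $V$. (Here one uses that a decomposition of a summand extends to a decomposition of the whole, and that $\Ext_\bDelta^1(-,-)$ is additive in each variable.) Everything else is a direct application of Lemma~\ref{lemma directsum} and Corollary~\ref{corollary spanning}; there is no genuine obstacle, only the need to state the inductive step carefully.
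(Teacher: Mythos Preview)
Your argument is correct, but it carries unnecessary baggage. The paper's proof is a single sentence: it invokes Proposition~\ref{proposition Domokos} (not Corollary~\ref{corollary spanning}) to span $\SI[\bDelta,\bd]$ by the $c_\bd^V$ for \emph{arbitrary} $V$ with $\theta^V(\bd)=0$ and $c_\bd^V\neq 0$, and then applies Lemma~\ref{lemma directsum} to factor each such $c_\bd^V$ over the indecomposable summands of $V$. The point you seem to have missed is that Lemma~\ref{lemma directsum} has no ext-minimality hypothesis whatsoever---it only requires $c_\bd^V\neq 0$. Hence your detour through Corollary~\ref{corollary spanning} to obtain an ext-minimal $V$, and the subsequent paragraph of bookkeeping about ext-minimality being inherited by partial direct sums, is entirely superfluous: the inductive peeling-off via Lemma~\ref{lemma directsum} works for any $V$ with $c_\bd^V\neq 0$, ext-minimal or not. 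What your approach buys is nothing beyond the paper's; what it costs is a page of irrelevant verification.
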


\begin{proof}
This follows from Proposition~\ref{proposition Domokos} and
Lemma~\ref{lemma directsum} (this is also the content
of~\cite{Domokos2002}*{Corollary~3.4}).
\end{proof}

\section{Preliminary results} \label{section preliminary}

Throughout this section we fix a concealed-canonical bound quiver
$\bDelta$ and a sincere separating exact subcategory $\calR$ of
$\ind \bDelta$. We will use notation introduced in
Section~\ref{section tubular}. We also fix $\bd \in \bR$ such that
$p := p^\bd > 0$. Notice that this implies that $\bd$ is sincere.

First we prove that the algebra $\SI [\bDelta, \bd]$ is controlled
by the representations from $\add \calR$.

\begin{lemma} \label{lemma R}
Let $V$ be a representation of $\bDelta$ such that $\theta^V (\bd)
= 0$. If $c_\bd^V \neq 0$, then $V \in \add \calR$ and $\theta^V =
\langle \bdim V, - \rangle_\bDelta$.
\end{lemma}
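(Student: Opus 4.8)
The plan is to show that any representation $V$ with $\theta^V(\bd) = 0$ and $c_\bd^V \neq 0$ must decompose into indecomposable summands lying in $\calR$. First I would recall that $\bd \in \bR$ with $p^\bd > 0$, so $\bd = p\bh + \sum_{\lambda,i} p_{\lambda,i}^\bd \be_{\lambda,i}$ with $p > 0$; in particular there exists a representation in $\add \calR$ with dimension vector $\bd$ (e.g.\ an ext-minimal one), and such a representation $M_0$ satisfies $\Hom_\bDelta(P, M_0) = 0$ for each $P \in \calP$ and $\Hom_\bDelta(M_0, Q) = 0$ for each $Q \in \calQ$ by the defining properties of a separating subcategory. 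The key point is that $c_\bd^V \neq 0$ means $\Hom_\bDelta(V, M) = 0$ for some $M \in \rep_\bDelta(\bd)$; taking $M = M_0 \in \calR(\bd)$ is the natural choice.

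Next I would decompose $V = \bigoplus_j V_j$ into indecomposables and argue that each $V_j$ lies in $\calR$. Since $\ind \bDelta = \calP \cup \calR \cup \calQ$, it suffices to exclude $V_j \in \calP$ and $V_j \in \calQ$. If some $V_j \in \calQ$, then since $\calQ = \calR_+$ we would want to use that $\Hom_\bDelta(\calR, V_j)$ could be nonzero; more usefully, I would use the weight: by Lemma~\ref{lemma directsum} (applicable since $c_\bd^V \neq 0$ and $\theta^V(\bd) = 0$ force $\theta^{V_j}(\bd) = 0$ and $c_\bd^{V_j} \neq 0$ for each $j$), it is enough to treat $V$ indecomposable. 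So assume $V$ is indecomposable with $\theta^V(\bd) = 0$ and $c_\bd^V \neq 0$. If $V \in \calP$, then $\pdim_\bDelta V \leq 1$, so $\theta^V = \langle \bdim V, -\rangle_\bDelta$; evaluating at $M_0 \in \calR(\bd)$ gives $0 = \theta^V(\bd) = \langle \bdim V, \bd\rangle_\bDelta = \dim \Hom_\bDelta(V, M_0) - \dim \Ext^1_\bDelta(V, M_0)$. But $\Ext^1_\bDelta(\calP, \calR) = 0$, so $\Hom_\bDelta(V, M_0) = 0$ — consistent so far, so this alone does not give a contradiction. Instead I would use~\eqref{eq P}: $V \in \calP$ iff $\langle \bdim V, \bh\rangle_\bDelta > 0$, combined with $\langle \bdim V, \bd\rangle_\bDelta = 0$. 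Writing $\bd = p\bh + \bd'$ where $\bd'$ is the "$p$-less" part, and using $\langle \bdim V, \bd'\rangle_\bDelta \geq 0$ (since $\bd'$ is a dimension vector of a representation in $\add \calR$ with $p^{\bd'} = 0$, and~\eqref{eq PQR} applies), we get $0 = \langle \bdim V, \bd\rangle_\bDelta = p\langle \bdim V, \bh\rangle_\bDelta + \langle \bdim V, \bd'\rangle_\bDelta \geq p \cdot 1 > 0$, a contradiction. Thus $V \notin \calP$.

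For the case $V \in \calQ$, I would argue dually using~\eqref{eq Q}: $V \in \calQ$ iff $\langle \bh, \bdim V\rangle_\bDelta > 0$. Here I need $\theta^V$ expressed as $-\langle -, \bdim \tau V\rangle_\bDelta$; since $V \in \calQ$ is not projective (indecomposable injectives lie in $\calQ$ but $V$ need not be injective — however $\calQ$ contains no projectives since projectives lie in $\calP = \calR_-$), we have $\tau^-\tau V \simeq V$, and $\idim_\bDelta \tau V \leq 1$ because $\calQ$ is $\tau$-stable and $\idim_\bDelta Q \leq 1$ for $Q \in \calQ$. Hence $\theta^V = -\langle -, \bdim \tau V\rangle_\bDelta$, so $0 = \theta^V(\bd) = -\langle \bd, \bdim \tau V\rangle_\bDelta$, i.e.\ $\langle \bd, \bdim \tau V\rangle_\bDelta = 0$. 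Now $\langle \bd, \bdim \tau V\rangle_\bDelta = p\langle \bh, \bdim \tau V\rangle_\bDelta + \langle \bd', \bdim \tau V\rangle_\bDelta$, with $\langle \bd', \bdim \tau V\rangle_\bDelta \geq 0$ by~\eqref{eq PQR} (as $\tau V \in \calQ$), and $\langle \bh, \bdim \tau V\rangle_\bDelta = \langle \bh, \bdim V\rangle_\bDelta$ using~\eqref{eq h}-type invariance or directly since $\langle \bh, -\rangle_\bDelta$ is constant on $\tau$-orbits in $\calQ$; more carefully, $\langle \bh, \bdim \tau V\rangle_\bDelta > 0$ because $\tau V \in \calQ$ by~\eqref{eq Q}. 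So $0 = \langle \bd, \bdim \tau V\rangle_\bDelta \geq p\langle \bh, \bdim\tau V\rangle_\bDelta > 0$, a contradiction, and $V \notin \calQ$. Therefore $V \in \calR$, each indecomposable summand of the original $V$ lies in $\calR$, so $V \in \add \calR$; then $\pdim_\bDelta V \leq 1$ (since $\pdim_\bDelta R = 1$ for $R \in \calR$), whence $\theta^V = \langle \bdim V, -\rangle_\bDelta$ by~\eqref{ARformula1}. The main obstacle is getting the weight identities right: making sure $\theta^V$ has the form $-\langle -, \bdim\tau V\rangle_\bDelta$ in the $\calQ$ case requires checking $V$ has no projective summands and $\idim_\bDelta \tau V \leq 1$, and correctly splitting $\bd = p\bh + \bd'$ and invoking~\eqref{eq PQR} with the right category membership for each factor.
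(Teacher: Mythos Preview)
Your argument is correct and follows essentially the same route as the paper: reduce to indecomposable summands via Lemma~\ref{lemma directsum}, then for a summand in $\calP$ use $\pdim \leq 1$, the splitting $\bd = p\bh + \bd'$, \eqref{eq PQR} and \eqref{eq P} to force $\theta^P(\bd) > 0$, and dually for $\calQ$ (the paper compresses the dual case to one word, while you spell out $\theta^V = -\langle -, \bdim \tau V\rangle_\bDelta$ and verify the hypotheses). One small slip in your opening paragraph, which you fortunately never use: it is \emph{not} true that $\Hom_\bDelta(P, M_0) = 0$ for $P \in \calP$ and $M_0 \in \add\calR$ --- the separating property gives $\Hom_\bDelta(M_0, P) = 0$ (and in fact condition~(3) forces $\Hom_\bDelta(P, \calR) \neq 0$); likewise your claim $\Hom_\bDelta(M_0, Q) = 0$ has the arguments reversed.
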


\begin{proof}
Assume that $P \in \calP$ is a direct summand of $V$. Since
$\pdim_\bDelta P \leq 1$, \eqref{eq PQR} and~\eqref{eq P} imply
that
\[
\theta^P (\bd) = \langle \bdim P, \bd \rangle_\bDelta \geq \langle
\bdim P, \bh \rangle_\bDelta > 0.
\]
Consequently, $c_\bd^V = 0$ by Lemma~\ref{lemma directsum},
contradiction. Dually, $V$ cannot have a direct summand from
$\calQ$. Finally, since $\pdim_\bDelta V = 1$, $\theta^V = \langle
\bdim V, - \rangle_\bDelta$.
\end{proof}

Together with Corollary~\ref{corollary spanning} this lemma
immediately implies the following.

\begin{corollary} \label{corollary spanningbis}
Let $\theta \in \bbZ^{\bDelta_0}$ be such that $\SI [\bDelta,
\bd]_\theta \neq 0$. Then there exists $\br \in \bR$ such that
$\theta = \langle \br, - \rangle_\bDelta$ and $\langle \br, \bd
\rangle_\bDelta = 0$. \qed
\end{corollary}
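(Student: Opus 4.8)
The plan is to start from the conclusion of Corollary~\ref{corollary spanning}: since $\SI [\bDelta, \bd]_\theta \neq 0$, the space is spanned by functions $c_\bd^V$ for ext-minimal $V \in \rep \bDelta$ with $\theta^V = \theta$ and $c_\bd^V \neq 0$, so in particular at least one such $V$ exists. Fix such a $V$. The key input is Lemma~\ref{lemma R}, which applies because $\theta^V(\bd) = \theta(\bd)$ and this vanishes: indeed $\theta(\bd) = 0$ is part of the hypothesis of Corollary~\ref{corollary spanning}, but we need to observe first that $\SI[\bDelta,\bd]_\theta \neq 0$ forces $\theta(\bd) = 0$ by Proposition~\ref{proposition Domokos}(1). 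Given that, Lemma~\ref{lemma R} tells us $V \in \add \calR$ and $\theta^V = \langle \bdim V, - \rangle_\bDelta$.

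So I would set $\br := \bdim V$. Since $V \in \add \calR$ we have $\br \in \bR$ by definition of $\bR$ (the set of dimension vectors of representations in $\add \calR$). Then $\theta = \theta^V = \langle \br, - \rangle_\bDelta$, which is the first assertion. For the second assertion, $\langle \br, \bd \rangle_\bDelta = \langle \br, - \rangle_\bDelta(\bd) = \theta^V(\bd) = \theta(\bd) = 0$, using once more that $\theta(\bd) = 0$.

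There is essentially no obstacle here — the corollary is a direct bookkeeping consequence of Corollary~\ref{corollary spanning} and Lemma~\ref{lemma R}. The only point requiring a moment's care is the very first step: extracting $\theta(\bd) = 0$ and the existence of at least one admissible $V$ from the mere non-vanishing of $\SI[\bDelta,\bd]_\theta$. Both follow from Proposition~\ref{proposition Domokos} (part (1) gives $\theta(\bd) = 0$, and then part (2), refined by Corollary~\ref{corollary spanning}, gives a nonzero spanning set of the required form, hence a nonempty one). Everything else is substitution of definitions.

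\begin{proof}
Since $\SI [\bDelta, \bd]_\theta \neq 0$, Proposition~\ref{proposition Domokos}(1) implies $\theta (\bd) = 0$. Consequently, Corollary~\ref{corollary spanning} shows that $\SI [\bDelta, \bd]_\theta$ is spanned by the functions $c_\bd^V$ for ext-minimal $V \in \rep \bDelta$ such that $\theta^V = \theta$ and $c_\bd^V \neq 0$. As this space is non-zero, there exists at least one such $V$. Fix it. Then $\theta^V (\bd) = \theta (\bd) = 0$ and $c_\bd^V \neq 0$, so Lemma~\ref{lemma R} yields $V \in \add \calR$ and $\theta^V = \langle \bdim V, - \rangle_\bDelta$. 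Put $\br := \bdim V$. Since $V \in \add \calR$, we have $\br \in \bR$. Moreover $\theta = \theta^V = \langle \br, - \rangle_\bDelta$, and $\langle \br, \bd \rangle_\bDelta = \theta^V (\bd) = \theta (\bd) = 0$.
\end{proof}
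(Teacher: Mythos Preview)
Your proof is correct and follows exactly the approach indicated by the paper, which simply records the corollary as an immediate consequence of Corollary~\ref{corollary spanning} and Lemma~\ref{lemma R}. You have merely made explicit the invocation of Proposition~\ref{proposition Domokos}(1) to obtain $\theta(\bd)=0$ before applying Corollary~\ref{corollary spanning}; this is a harmless (and helpful) bit of extra detail, not a different route.
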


Taking into account Corollary~\ref{corollary generating} we need
to identify $V \in \ind \bDelta$ such that $\theta^V (\bd) = 0$
and $c_\bd^V \neq 0$. The first step in this direction is the
following.

\begin{lemma} \label{lemma indecomposable}
Let $V$ be an indecomposable representation of $\bDelta$. If
$\theta^V (\bd) = 0$ and $c_\bd^V \neq 0$, then $V = R_{\lambda, i
+ 1}^{(n)}$ for some $\lambda \in \bbP_\Bbbk^1$, $i \in \bbZ$ and
$n \in \bbN_+$ such that $p_{\lambda, i}^\bd = p_{\lambda, i +
n}^\bd$ and $p_{\lambda, j}^\bd \geq p_{\lambda, i}^\bd$ for each
$j \in [i + 1, i + n - 1]$.
\end{lemma}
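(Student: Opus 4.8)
The plan is to leverage Lemma~\ref{lemma R} together with the classification of indecomposables in $\calR$ and the homological vanishing conditions forced by $c_\bd^V \neq 0$. By Lemma~\ref{lemma R}, the hypotheses $\theta^V(\bd)=0$ and $c_\bd^V \neq 0$ already imply $V \in \add\calR$, and since $V$ is indecomposable, $V = R_{\lambda,i'}^{(n)}$ for some $\lambda \in \bbP_\Bbbk^1$, $i' \in \bbZ$ and $n \in \bbN_+$. Writing $i := i'-1$ (so $V = R_{\lambda,i+1}^{(n)}$), it remains to extract the two numerical conditions on $p_{\lambda,\bullet}^\bd$ from the nonvanishing of $c_\bd^V$.

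The key tool is that, since $\pdim_\bDelta V \leq 1$, the condition $c_\bd^V(M) \neq 0$ for some (equivalently generic) $M \in \rep_\bDelta(\bd)$ is equivalent to $\Ext_\bDelta^1(V,M) = 0$ for generic $M$, and also to $\Hom_\bDelta(V,M) = 0$ for generic $M$. First I would use $\theta^V(\bd) = 0$: by Lemma~\ref{lemma R}, $\theta^V = \langle \bdim V, -\rangle_\bDelta = \langle \be_{\lambda,i+1}^n, -\rangle_\bDelta$, so $\theta^V(\bd) = \langle \be_{\lambda,i+1}^n, \bd\rangle_\bDelta$. Applying formula~\eqref{eq ed} with starting index $i+1$ gives
\[
0 = \theta^V(\bd) = \langle \be_{\lambda,i+1}^n, \bd\rangle_\bDelta = p_{\lambda,i+n}^\bd - p_{\lambda,i}^\bd,
\]
which is precisely the first asserted condition $p_{\lambda,i}^\bd = p_{\lambda,i+n}^\bd$. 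For the second condition, I would use the nonvanishing of $c_\bd^V$ more seriously. The idea is that if some intermediate $p_{\lambda,j}^\bd$ with $j \in [i+1, i+n-1]$ were strictly less than $p_{\lambda,i}^\bd$, then every representation $M \in \add\calR$ with $\bdim M = \bd$ (in particular the ext-minimal ones, which have $p^\bd > 0$ copies of $\bh$ available) would necessarily admit a nonzero homomorphism from $V = R_{\lambda,i+1}^{(n)}$; using the Hom-formula $\dim_\Bbbk\Hom_\bDelta(R_{\lambda,i+1}^{(n)}, R_{\mu,j}^{(m)}) = \min\{q_{\lambda,i+n}^{R_{\mu,j}^{(m)}}, q_{\mu,j}^{R_{\lambda,i+1}^{(n)}}\}$ one checks that a "dip" in the profile $p_{\lambda,\bullet}^\bd$ below its endpoint value forces such a map. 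But $c_\bd^V(M) = 0$ whenever $\Hom_\bDelta(V,M) \neq 0$, and since generically $M \in \rep_\bDelta(\bd)$ can be taken (via upper semicontinuity of $\dim\Hom$) to be a representation in $\add\calR$ minimizing $\dim\Hom_\bDelta(V,-)$, this would contradict $c_\bd^V \neq 0$.

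The main obstacle is making the last step precise: one must show that the generic value of $\dim_\Bbbk\Hom_\bDelta(V,M)$ over $M \in \rep_\bDelta(\bd)$ is attained on $M \in \add\calR$, and then compute that this generic value is positive exactly when the profile condition fails. For the first part I would invoke that $c_\bd^V \neq 0$ means $c_\bd^V(M_0) \neq 0$ for some $M_0$, hence $\Hom_\bDelta(V, M_0) = 0$, so the generic value is $0$; thus it suffices to exhibit, whenever the profile condition fails, that $\Hom_\bDelta(V,M) \neq 0$ for \emph{all} $M \in \rep_\bDelta(\bd)$ — or at least for all $M$ in a dense subset — which one does by degenerating $M_0$ to a representation in $\add\calR$ with dimension vector $\bd$ (such exist since $\bd \in \bR$) and applying the explicit Hom-formula together with~\eqref{eq sequence}. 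Concretely, take $W \in \add\calR$ ext-minimal with $\bdim W = \bd$; since $p^\bd = p > 0$, $W$ contains a summand whose $\calR_\lambda$-composition series realizes every value $p_{\lambda,j}^\bd$, and a short computation with $\min\{q_{\lambda,i+n}^W, q_{\lambda,i+1}^{V}\}$ shows this is nonzero precisely when there is $j \in [i+1,i+n-1]$ with $p_{\lambda,j}^\bd < p_{\lambda,i}^\bd = p_{\lambda,i+n}^\bd$. Since $\Hom_\bDelta(V,W) \neq 0$ forces $c_\bd^V(W) = 0$, and $c_\bd^V \neq 0$ would then be contradicted by semicontinuity (the zero locus of $c_\bd^V$ being closed and $W$ lying in the closure of the orbit-dense locus where $c_\bd^V \neq 0$ — here one uses that $\bd$ is sincere and $M_0$ degenerates appropriately), we conclude $p_{\lambda,j}^\bd \geq p_{\lambda,i}^\bd$ for all $j \in [i+1,i+n-1]$, as required. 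I expect the degeneration/semicontinuity bookkeeping to be the delicate point, whereas the Hom-computation is routine given the formulas recalled in Section~\ref{section tubular}.
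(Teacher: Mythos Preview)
Your first two steps --- invoking Lemma~\ref{lemma R} to place $V$ in $\calR$ and then using~\eqref{eq ed} to read off $p_{\lambda,i}^\bd = p_{\lambda,i+n}^\bd$ from $\theta^V(\bd)=0$ --- are correct and coincide with the paper's proof.

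The gap is in your argument for the inequalities $p_{\lambda,j}^\bd \geq p_{\lambda,i}^\bd$. The semicontinuity/degeneration step you sketch points in the wrong direction: $\dim_\Bbbk\Hom_\bDelta(V,-)$ is \emph{upper} semicontinuous, so exhibiting a single ext-minimal $W\in\add\calR$ with $\Hom_\bDelta(V,W)\neq 0$ (hence $c_\bd^V(W)=0$) says nothing about $\Hom_\bDelta(V,M_0)$ for the $M_0$ with $c_\bd^V(M_0)\neq 0$. Having $c_\bd^V$ vanish at one closed point is perfectly compatible with $c_\bd^V\neq 0$ as a function, and $W$ being a degeneration of $M_0$ (if it even is) only makes $\dim\Hom$ go up, not down. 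To salvage your route you would need $\Hom_\bDelta(V,M)\neq 0$ for \emph{every} $M\in\rep_\bDelta(\bd)$, and that cannot be deduced from the computation at a single $W$.

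The paper avoids all of this by applying Lemma~\ref{lemma zero} directly: since $c_\bd^V\neq 0$, one has $\theta^{V'}(\bd)\leq 0$ for every factor representation $V'$ of $V$. For $j\in[i+1,i+n-1]$ the sequence~\eqref{eq sequence} exhibits $V' := R_{\lambda,j+1}^{(n+i-j)}$ as a factor of $V=R_{\lambda,i+1}^{(n)}$, and~\eqref{eq ed} gives $\theta^{V'}(\bd)=p_{\lambda,i+n}^\bd - p_{\lambda,j}^\bd$. Thus $p_{\lambda,j}^\bd \geq p_{\lambda,i+n}^\bd = p_{\lambda,i}^\bd$, and the proof is complete in one line --- no ext-minimal representations, no Hom-computations on specific modules, and no semicontinuity are needed.
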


\begin{proof}
We know from Lemma~\ref{lemma R} that $V \in \calR$, hence there
exists $\lambda \in \bbP_\Bbbk^1$, $i \in \bbZ$ and $n \in \bbN_+$
such that $V = R_{\lambda, i + 1}^{(n)}$. Then $\theta^V (\bd) =
p_{\lambda, i + n}^\bd - p_{\lambda, i}^\bd$ by~\eqref{eq ed},
thus the condition $\theta^V (\bd) = 0$ means that $p_{\lambda,
i}^\bd = p_{\lambda, i + n}^\bd$. Finally, the condition $c_\bd^V
\neq 0$ and Lemma~\ref{lemma zero} imply that $\theta^{V'} (\bd)
\leq 0$ for each factor representation $V'$ of $V$. The
sequence~\eqref{eq sequence} implies that $R_{\lambda, j + 1}^{(n
+ i - j)}$ is a factor representation of $V$ for each $j \in [i +
1, i + n - 1]$, hence the claim follows.
\end{proof}

Now we show that the representations described in the above lemma
give rise to non-zero semi-invariants.

\begin{lemma} \label{lemma non-zero1}
Let $\lambda \in \bbP_\Bbbk^1$, $i \in \bbZ$ and $n \in \bbN$ be
such that $p_{\lambda, i}^\bd = p_{\lambda, i + n}^\bd$ and
$p_{\lambda, j}^\bd \geq p_{\lambda, i + n}^\bd$ for each $j \in
[i + 1, i + n - 1]$. If $V := R_{\lambda, i + 1}^{(n)}$, then
$\theta^V (\bd) = 0$ and there exists $R \in \calR (\bd)$ such
that $c_\bd^V (R) \neq 0$.
\end{lemma}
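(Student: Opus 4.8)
The statement has two parts: $\theta^V(\bd)=0$, and the existence of $R\in\calR(\bd)$ with $c_\bd^V(R)\neq 0$. The first part is immediate: since $V=R_{\lambda,i+1}^{(n)}=R_{\lambda,(i+1)}^{(n)}$ has dimension vector $\be_{\lambda,i+1}^n$, formula \eqref{eq ed} gives $\theta^V(\bd)=\langle\be_{\lambda,i+1}^n,\bd\rangle_\bDelta=p_{\lambda,i+n}^\bd-p_{\lambda,i}^\bd$, which vanishes by hypothesis. (Here I also use $\pdim_\bDelta V=1$, so that $\theta^V=\langle\bdim V,-\rangle_\bDelta$, as recalled in Section~\ref{section tubular}.) So the work is entirely in producing the right module $R$.

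The guiding principle is that $c_\bd^V(R)\neq 0$ means precisely $\Hom_\bDelta(V,R)=0$ (using $\pdim_\bDelta V\le 1$, this is also the same as $\Ext^1_\bDelta(V,R)=0$). So I must exhibit a representation $R\in\add\calR$ with $\bdim R=\bd$ and $\Hom_\bDelta(R_{\lambda,i+1}^{(n)},R)=0$. The natural candidate is an ext-minimal representation in $\add\calR$ of dimension vector $\bd$. However, the hypothesis only controls the ``$p$-part'' of $\bd$; to invoke Lemma~\ref{lemma extminimal}, which is stated for $p^\bd=0$, I first write $\bd=p\cdot\bh+\bd_0$ where $p=p^\bd$ and $\bd_0:=\bd-p\cdot\bh$ satisfies $p^{\bd_0}=0$ and $p_{\lambda,j}^{\bd_0}=p_{\lambda,j}^\bd$ for all $j$. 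Let $W_0\in\add\calR$ be the ext-minimal representation of dimension vector $\bd_0$ from the inductive construction preceding Lemma~\ref{lemma extminimal}. The hypotheses $p_{\lambda,i}^\bd=p_{\lambda,i+n}^\bd$ and $p_{\lambda,j}^\bd\ge p_{\lambda,i+n}^\bd=p_{\lambda,i}^\bd$ for $j\in[i+1,i+n-1]$ are exactly what Lemma~\ref{lemma extminimal} needs (with its ``$n$'' taken to be $n$ and starting index $i$, after noting $p_{\lambda,j}^\bd\ge p_{\lambda,i}^\bd$ also holds trivially at $j=i$ and $j=i+n$), so $\Hom_\bDelta(R_{\lambda,i+1}^{(n)},W_0)=0$.

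It remains to ``add back'' $p\cdot\bh$ without creating homomorphisms from $V$. For this I take $R:=W_0\oplus T$ where $T\in\add\calR$ has dimension vector $p\cdot\bh$ and is chosen with support away from $\lambda$: concretely, pick $\mu\in\bbP_\Bbbk^1\setminus\{\lambda\}$ and set $T:=(R_{\mu,0}^{(r_\mu)})^{\oplus p}$, whose dimension vector is $p\cdot\bh_\mu=p\cdot\bh$. Since $T\in\add\calR_\mu$ with $\mu\neq\lambda$, the Hom-formula recalled at the end of Section~\ref{section tubular} gives $\Hom_\bDelta(R_{\lambda,i+1}^{(n)},T)=0$, hence $\Hom_\bDelta(V,R)=\Hom_\bDelta(V,W_0)\oplus\Hom_\bDelta(V,T)=0$. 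Then $R\in\calR(\bd)$ and $c_\bd^V(R)\neq 0$, as required. The only place needing a little care is the bookkeeping in matching the index conventions of Lemma~\ref{lemma extminimal} to the hypotheses here (the lemma quantifies over $j\in[i,i+n]$ including the endpoints, which is automatic here since $p_{\lambda,i}^\bd=p_{\lambda,i+n}^\bd$ is the minimum of the relevant range); everything else is a direct application of the already-established facts.
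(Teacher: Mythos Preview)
Your proof is correct and follows essentially the same approach as the paper: split $\bd=p\cdot\bh+\bd_0$, take the ext-minimal $W_0$ of dimension $\bd_0$, apply Lemma~\ref{lemma extminimal} to kill $\Hom_\bDelta(V,W_0)$, and then add a module of dimension $p\cdot\bh$ supported in a tube $\mu\neq\lambda$. The only cosmetic difference is that the paper chooses $\mu$ with $r_\mu=1$ and uses the indecomposable $R_{\mu,0}^{(p)}$, whereas you use $(R_{\mu,0}^{(r_\mu)})^{\oplus p}$ for arbitrary $\mu\neq\lambda$; both choices work for the same reason.
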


\begin{proof}
We only need to show that there exists $R \in \calR (\bd)$ such
that $c_\bd^V (R) \neq 0$. Let $W \in \add \calR$ be an
ext-minimal representation for $\bd - p \cdot \bh$ and fix $\mu
\in \bbP_\Bbbk^1$ different from $\lambda$ such that $r_\mu = 1$.
If $R := W \oplus R_{\mu, 0}^{(p)}$, then $R \in \rep_\bDelta
(\bd)$ and $\Hom_\bDelta (V, R) = \Hom_\bDelta (V, W) = 0$ by
Lemma~\ref{lemma extminimal}, hence the claim follows.
\end{proof}

As a consequence we present a smaller generating set of $\SI
[\bDelta, \bd]$. First we introduce some notation. For $\lambda
\in \bbP_\Bbbk^1$ we denote by $\calI_\lambda$ the set of $i \in
[0, r_\lambda - 1]$ such that there exists $n \in \bbN_+$ with
$p_{\lambda, i}^\bd = p_{\lambda, i + n}^\bd$ and $p_{\lambda,
j}^\bd > p_{\lambda, i}^\bd$ for each $j \in [i + 1, i + n - 1]$
(such $n$, if exists, is uniquely determined by $\lambda$ and $i$,
and we denote it by $n_{\lambda, i}$). Observe that $\calI_\lambda
= \{ 0 \}$ and $n_{\lambda, 0} = 1$ if $r_\lambda = 1$.

\begin{corollary} \label{corollary generatingbis}
The algebra $\SI [\bDelta, \bd]$ is generated by the
semi-inva\-riants $c_\bd^{R_{\lambda, i + 1}^{(n_{\lambda, i})}}$
for $\lambda \in \bbP_\Bbbk^1$ and $i \in \calI_\lambda$.
\end{corollary}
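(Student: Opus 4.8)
The plan is to start from Corollary~\ref{corollary generating}, which tells us that $\SI [\bDelta, \bd]$ is generated by the semi-invariants $c_\bd^V$ for indecomposable $V \in \rep_\bDelta (\bd)$ with $\theta^V (\bd) = 0$ and $c_\bd^V \neq 0$. By Lemma~\ref{lemma indecomposable}, every such $V$ is of the form $R_{\lambda, i+1}^{(n)}$ for some $\lambda \in \bbP_\Bbbk^1$, $i \in \bbZ$, $n \in \bbN_+$ satisfying $p_{\lambda, i}^\bd = p_{\lambda, i+n}^\bd$ and $p_{\lambda, j}^\bd \geq p_{\lambda, i}^\bd$ for all $j \in [i+1, i+n-1]$. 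So it suffices to show that for every such triple $(\lambda, i, n)$, the semi-invariant $c_\bd^{R_{\lambda, i+1}^{(n)}}$ lies in the subalgebra generated by the listed $c_\bd^{R_{\mu, k+1}^{(n_{\mu,k})}}$.

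First I would reduce to the case $p_{\lambda, i}^\bd = \min_j p_{\lambda, j}^\bd$ on the relevant range. The idea is that the inequality condition $p_{\lambda, j}^\bd \geq p_{\lambda, i}^\bd$ on $[i+1, i+n-1]$ together with $p_{\lambda, i}^\bd = p_{\lambda, i+n}^\bd$ lets us view the ``profile'' of $\bd$ restricted to the arc from $i$ to $i+n$ as a collection of disjoint bumps sitting above the common base level $p_{\lambda, i}^\bd$. More precisely, I would set $c := p_{\lambda, i}^\bd$ and look at the maximal subintervals of $[i+1, i+n-1]$ on which $p_{\lambda, j}^\bd > c$; call these the bumps. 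Each bump, say spanning $[i'+1, i''-1]$ with $p_{\lambda, i'}^\bd = p_{\lambda, i''}^\bd = c$ and $p_{\lambda, j}^\bd > c$ strictly inside, is exactly indexed by an element of $\calI_\lambda$ (after reducing $i'$ mod $r_\lambda$), with $n_{\lambda, i'} = i'' - i'$. The key structural observation is then the exact sequence~\eqref{eq sequence}: iterating it, $R_{\lambda, i+1}^{(n)}$ has a filtration whose successive quotients are the $R_{\lambda, i'+1}^{(n_{\lambda, i'})}$ over the bumps together with copies of $R_{\lambda, k}$ (length-one objects, dimension vector $\be_{\lambda, k}$) at the base-level gaps between the bumps.

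The second, and I expect main, step is to verify that each of these successive quotients $V'$ has $\theta^{V'} (\bd) = 0$ and $c_\bd^{V'} \neq 0$, so that Corollary~\ref{corollary multsemi}\eqref{point multsemi1} can be applied repeatedly to split $c_\bd^{R_{\lambda, i+1}^{(n)}}$ as the product of the factors coming from the quotients. For the bump quotients $R_{\lambda, i'+1}^{(n_{\lambda, i'})}$ this is exactly Lemma~\ref{lemma non-zero1} (noting that on a bump the hypothesis ``$p_{\lambda, j}^\bd > p_{\lambda, i'}^\bd$ strictly inside'' is even stronger than what Lemma~\ref{lemma non-zero1} requires, and $p_{\lambda, i'}^\bd = p_{\lambda, i'+n_{\lambda, i'}}^\bd$ holds by construction). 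For a base-level gap quotient $R_{\lambda, k}$ one checks $\theta^{R_{\lambda,k}}(\bd) = p_{\lambda, k}^\bd - p_{\lambda, k-1}^\bd = 0$ via~\eqref{eq ed} (both adjacent values equal $c$ since $k$ and $k-1$ are at the base level), and $c_\bd^{R_{\lambda,k}} \neq 0$ again by Lemma~\ref{lemma non-zero1} applied with $n=1$. The subtlety here is bookkeeping: the arc from $i$ to $i+n$ may wrap around mod $r_\lambda$, and one must be careful that the indices $i'$ labelling the bumps, once reduced mod $r_\lambda$, genuinely land in $\calI_\lambda$; but this is forced because a bump by definition has $p_{\lambda, i'}^\bd = p_{\lambda, i'-1}^\bd$ is false — rather $i'$ is the left endpoint where the profile sits at the base level and the next value exceeds it, which is precisely the defining condition for membership in $\calI_\lambda$. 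Hence the semi-invariant attached to a base-level length-one quotient $R_{\lambda, k}$ is $c_\bd^{R_{\lambda, k}} = c_\bd^{R_{\lambda, (k-1)+1}^{(1)}}$, and $1 = n_{\lambda, k-1}$ whenever $k-1 \in \calI_\lambda$; but in a base-level gap of length at least two, $k-1$ need not be in $\calI_\lambda$. This forces a small extra argument: a run of consecutive base-level vertices contributes a product of the corresponding length-one semi-invariants, and I must show each such factor is itself generated by the listed ones. The cleanest way is to observe that the ext-minimal representation for $\bd - p\cdot\bh$ in $\add\calR$ contains $R_{\lambda, k}$ as a summand precisely when $k \in \calI_\lambda$ with $n_{\lambda, k} = 1$; a length-one object $R_{\lambda, k}$ with $p_{\lambda, k-1}^\bd > p_{\lambda, k}^\bd$ — i.e. $k \notin \calI_\lambda$ — still satisfies $\theta^{R_{\lambda,k}}(\bd) = 0$ only if $p_{\lambda, k+1}^\bd = p_{\lambda, k-1}^\bd$, which contradicts our base-level setup; so in fact base-level gaps between consecutive bumps always have all interior vertices at level exactly $c$ and the left endpoint of the gap is in $\calI_\lambda$ with $n = 1$. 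Thus after splitting off the first such $R_{\lambda, k}$ and inducting on $n$ the claim closes.

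Putting it together: by induction on $n$ (with the base cases $n = n_{\lambda, i}$ for $i \in \calI_\lambda$, or $n=1$ at a base-level vertex, being the listed generators or products thereof), the exact sequence~\eqref{eq sequence} $0 \to R_{\lambda, i+1}^{(n_1)} \to R_{\lambda, i+1}^{(n)} \to R_{\lambda, i+1+n_1}^{(n-n_1)} \to 0$ — where we take $n_1 = n_{\lambda, i}$ if $p_{\lambda, i+1}^\bd > c$ and $n_1 = 1$ if $p_{\lambda, i+1}^\bd = c$ — has all three terms with vanishing $\theta^\bullet(\bd)$ and non-zero $c_\bd^\bullet$, the left term by Lemma~\ref{lemma non-zero1} (or being a listed generator), the right by the inductive hypothesis (it is again of the form covered by Lemma~\ref{lemma indecomposable}), and the middle by assumption. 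Corollary~\ref{corollary multsemi}\eqref{point multsemi1} gives $c_\bd^{R_{\lambda, i+1}^{(n)}} = c_\bd^{R_{\lambda, i+1}^{(n_1)}} \cdot c_\bd^{R_{\lambda, i+1+n_1}^{(n-n_1)}}$ up to a non-zero scalar, and both factors lie in the subalgebra generated by the listed semi-invariants. Combining with Corollary~\ref{corollary generating} and Lemma~\ref{lemma indecomposable} completes the proof. The main obstacle, as flagged, is the careful verification that the indices arising from the wrap-around decomposition of the arc $[i, i+n]$ into bumps and base-level gaps really do belong to $\calI_\lambda$ with the correct values of $n_{\lambda, i'}$, together with handling base-level gaps of length greater than one; everything else is a routine application of~\eqref{eq sequence}, \eqref{eq ed}, Lemma~\ref{lemma non-zero1} and Corollary~\ref{corollary multsemi}.
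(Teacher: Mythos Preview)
Your final paragraph is essentially the paper's proof: split off $R_{\lambda,i+1}^{(n_{\lambda,i})}$ via the exact sequence~\eqref{eq sequence}, apply Corollary~\ref{corollary multsemi}\eqref{point multsemi1}, and induct on $n$. The one simplification you miss is that your case distinction is illusory: whenever $(i,n)$ satisfies $p_{\lambda,i}^\bd = p_{\lambda,i+n}^\bd$ and $p_{\lambda,j}^\bd \geq p_{\lambda,i}^\bd$ on $[i+1,i+n-1]$, one automatically has $i \in \calI_\lambda$ (with $n_{\lambda,i}=1$ precisely when $p_{\lambda,i+1}^\bd = p_{\lambda,i}^\bd$), so the ``base-level gap'' case is already covered by $n_1 = n_{\lambda,i}$ and the entire bump/gap discussion in your middle paragraphs is unnecessary --- and indeed that discussion contains some muddled statements (e.g.\ $k \notin \calI_\lambda$ is not equivalent to $p_{\lambda,k-1}^\bd > p_{\lambda,k}^\bd$) that you should drop.
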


\begin{proof}
For $\lambda \in \bbP_\Bbbk^1$ we denote by $\calI_\lambda'$ the
set of all pairs $(i, n) \in [0, r_\lambda - 1] \times \bbN_+$
such that $p_{\lambda, i}^\bd = p_{\lambda, i + n}^\bd$ and
$p_{\lambda, j}^\bd \geq p_{\lambda, i}^\bd$ for each $j \in [i +
1, i + n - 1]$. Observe that if $\lambda \in \bbP_\Bbbk^1$ and
$(i, n) \in \calI_\lambda'$, then $i \in \calI_\lambda$.
Corollary~\ref{corollary generating} and Lemma~\ref{lemma
indecomposable} imply that the algebra $\SI [\bDelta, \bd]$ is
generated by the semi-invariants $c_\bd^{R_{\lambda, i +
1}^{(n)}}$ for $\lambda \in \bbP_\Bbbk^1$ and $(i, n) \in
\calI_\lambda'$. Now, let $\lambda \in \bbP_\Bbbk^1$ and $(i, n)
\in \calI_\lambda'$. Obviously, $n \geq n_{\lambda, i_\lambda}$.
If $n > n_{\lambda, i_\lambda}$, then (up to a non-zero scalar)
$c_\bd^{R_{\lambda, i + 1}^{(n)}} = c_\bd^{R_{\lambda, i +
1}^{(n_{\lambda, i})}} \cdot c_\bd^{R_{\lambda, i + n_{\lambda, i}
+ 1}^{(n - n_{\lambda, i})}}$ by Corollary~\ref{corollary
multsemi}\eqref{point multsemi1}, as according to~\eqref{eq
sequence} we have an exact sequence
\[
0 \to R_{\lambda, i + 1}^{(n_{\lambda, i})} \to R_{\lambda, i +
1}^{(n)} \to R_{\lambda, i + n_{\lambda, i} + 1}^{(n - n_{\lambda,
i})} \to 0.
\]
Since $R_{\lambda, i + n_{\lambda, i} + 1}^{(n - n_{\lambda, i})}
= R_{\lambda, (i + n_{\lambda, i} + 1) \mod r_\lambda}^{(n -
n_{\lambda, i})}$ and $((i + n_{\lambda, i}) \mod r_\lambda, n -
n_{\lambda, i}) \in \calI_\lambda'$, the claim follows by
induction.
\end{proof}

At the later stage we will prove that for each non-zero
semi-invariant $f$ there exists $R \in \calR (\bd)$ such that $f
(R) \neq 0$. At the moment we formulate the following versions of
this fact.

\begin{lemma} \label{lemma non-zero3}
Let $V$ be a representation of $\bDelta$ such that $\theta^V (\bd)
= 0$ and $c_\bd^V \neq 0$. Then there exists $R \in \calR (\bd)$
such that $c_\bd^V (R) \neq 0$.
\end{lemma}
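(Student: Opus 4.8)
The plan is to reduce the general case to the ext-minimal case handled in Lemma~\ref{lemma non-zero1}, by peeling off direct summands and using the multiplicativity of the $c_\bd^V$'s together with the semicontinuity of $\Hom$. First I would invoke Lemma~\ref{lemma R} to conclude $V \in \add\calR$, so $V = \bigoplus_{\lambda} V_\lambda$ with $V_\lambda \in \add\calR_\lambda$, and $\theta^V = \langle \bdim V, -\rangle_\bDelta$. By Lemma~\ref{lemma directsum} (applied repeatedly), $c_\bd^{V_\lambda} \neq 0$ and $\theta^{V_\lambda}(\bd) = 0$ for each $\lambda$, and by Lemma~\ref{lemma directsumbis} the product of the $c_\bd^{V_\lambda}$ evaluated on a direct sum is the product of the individual values; so it suffices to find, for each $\lambda$ appearing nontrivially, a representation $R^{(\lambda)} \in \calR(\bd)$ with $c_\bd^{V_\lambda}(R^{(\lambda)}) \neq 0$, and then to splice these together. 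Since only finitely many $\lambda$ have $r_\lambda > 1$ (the set $\bbX_0$ is finite), and only finitely many $\lambda$ can occur in $V$, there is enough room in $\bbP_\Bbbk^1$ to do this.

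Next, for a fixed $\lambda$, I would further decompose $V_\lambda$ into its indecomposable summands $R_{\lambda, i+1}^{(n)}$ and, using Corollary~\ref{corollary spanning} (or directly the induction in its proof), reduce to the case where $V_\lambda$ is ext-minimal in $\add\calR$: replacing $V_\lambda$ by an ext-minimal representation with the same dimension vector only multiplies $c_\bd^{V_\lambda}$ by a non-zero scalar (via Corollary~\ref{corollary multsemi} and Lemma~\ref{lemma directsum}), and does not change $\theta^{V_\lambda}$. Once $V_\lambda$ is ext-minimal, each of its indecomposable summands is of the form $R_{\lambda, i+1}^{(n)}$ with $p_{\lambda,i}^\bd = p_{\lambda,i+n}^\bd$ and $p_{\lambda,j}^\bd \geq p_{\lambda,i}^\bd$ for $j \in [i+1, i+n-1]$, by Lemma~\ref{lemma indecomposable} (the hypotheses of that lemma hold because $c_\bd^{V_\lambda} \neq 0$ forces the same on each summand via Lemma~\ref{lemma directsum}). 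Now I can apply Lemma~\ref{lemma non-zero1} to produce, for each such summand, a witness; but to handle the whole $V_\lambda$ at once I would instead use the explicit construction underlying Lemma~\ref{lemma non-zero1}: take $W$ an ext-minimal representation for $\bd - p\cdot\bh$ and set $R := W \oplus R_{\mu,0}^{(p)}$ for a suitable $\mu \neq \lambda$ with $r_\mu = 1$, and check via Lemma~\ref{lemma extminimal} that $\Hom_\bDelta(R_{\lambda,i+1}^{(n)}, R) = 0$ for every indecomposable summand $R_{\lambda,i+1}^{(n)}$ of $V_\lambda$ simultaneously — this works because Lemma~\ref{lemma extminimal} kills $\Hom$ from $R_{\lambda,i+1}^{(n)}$ into $W$ whenever $p_{\lambda,i}^\bd = p_{\lambda,i+n}^\bd$ and $p_{\lambda,j}^\bd \geq p_{\lambda,i}^\bd$ on $[i,i+n]$, and the $R_{\mu,0}^{(p)}$ part contributes nothing since $\mu \neq \lambda$.

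The point where I expect the real work to lie is in combining the witnesses for different $\lambda$ into a single $R \in \calR(\bd)$. The naive approach — take $W$ ext-minimal for $\bd - p\cdot\bh$ and add $R_{\mu,0}^{(p)}$ for one auxiliary $\mu$ — gives a single representation that simultaneously avoids maps from all the relevant $R_{\lambda,i+1}^{(n)}$ for every $\lambda$, because $\Hom_\bDelta(R_{\lambda,\cdot}^{(\cdot)}, W) = 0$ follows from Lemma~\ref{lemma extminimal} for each tube separately (the lemma's hypothesis is tube-local), and $\Hom_\bDelta(\calR_\lambda, \calR_\mu) = 0$ for $\lambda \neq \mu$. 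So in fact the same $R$ works for all $\lambda$ at once, and the "combining" step is essentially free: $c_\bd^V(R) = \prod_\lambda c_\bd^{V_\lambda}(R) \neq 0$ by Lemma~\ref{lemma directsum} applied to $V$ and to $R$. The only subtlety is checking that $R$ has dimension vector exactly $\bd$ and genuinely lies in $\calR(\bd)$: here one uses that $\bdim W = \bd - p\cdot\bh$, that $\bdim R_{\mu,0}^{(p)} = p\cdot\bh$ (since $r_\mu = 1$ forces $\be_{\mu,0} = \bh$ and $\be_{\mu,0}^p = p\cdot\bh$), and that $W \oplus R_{\mu,0}^{(p)} \in \add\calR$ because both summands are. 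This is exactly the argument already carried out in Lemma~\ref{lemma non-zero1}, now read uniformly across all the tubes, so after setting up the notation the proof is short.
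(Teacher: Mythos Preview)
Your proposal is correct, but it takes a different route from the paper. The paper's proof is very short: for each indecomposable direct summand $X$ of $V$ one has $c_\bd^X \neq 0$ by Lemma~\ref{lemma directsum}, so Lemmas~\ref{lemma indecomposable} and~\ref{lemma non-zero1} produce a witness $R_X \in \calR(\bd)$ with $c_\bd^X(R_X) \neq 0$; then the paper invokes the geometric fact (cited from Domokos--Lenzing) that $\calR(\bd)$ is an \emph{irreducible open} subset of $\rep_\bDelta(\bd)$, so the finitely many non-empty open sets $\{c_\bd^X \neq 0\} \cap \calR(\bd)$ have non-empty intersection, yielding a common $R$.

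Your argument instead constructs a single explicit witness $R = W \oplus R_{\mu,0}^{(p)}$ that works for all indecomposable summands at once, by choosing $\mu$ with $r_\mu = 1$ outside the (finite) set of tubes meeting $V$ and applying Lemma~\ref{lemma extminimal} tube-by-tube. This is perfectly valid and has the merit of being constructive and of avoiding the external geometric input about $\calR(\bd)$. Two remarks: first, your ext-minimal reduction for $V_\lambda$ is unnecessary --- Lemma~\ref{lemma indecomposable} already applies directly to every indecomposable summand of the original $V$ (via Lemma~\ref{lemma directsum}), so the hypotheses of Lemma~\ref{lemma extminimal} are available without modifying $V$. Second, your invocation of Lemma~\ref{lemma directsumbis} in the first paragraph is a red herring; what you actually use is Lemma~\ref{lemma directsum} (or simply that $\Hom_\bDelta(V,R) = 0$ iff $\Hom_\bDelta(X,R) = 0$ for every summand $X$). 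With those simplifications your proof becomes essentially one paragraph, parallel to the paper's but trading the irreducibility argument for an explicit $R$.
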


\begin{proof}
Let $X$ be an indecomposable direct summand of $V$.
Lemma~\ref{lemma directsum} implies that $c_\bd^X \neq 0$.
Consequently, Lemmas~\ref{lemma indecomposable} and~\ref{lemma
non-zero1} imply that there exists $R_X \in \calR (\bd)$ such that
$c_\bd^X (R_X) \neq 0$. Since $\calR (\bd)$ is an irreducible and
open subset of $\rep_\bDelta
(\bd)$~\cite{DomokosLenzing2002}*{Section~4}, there exists $R \in
\calR (\bd)$ such that $c_\bd^X (R) \neq 0$ for each
indecomposable direct summand $X$ of $V$. Using once more
Lemma~\ref{lemma directsum} we obtain that $c_\bd^V (R) \neq 0$.
\end{proof}

\begin{lemma} \label{lemma non-zero2}
If $q \in \bbN$ and $f \in \SI [\bDelta, \bd]_{\langle q \cdot
\bh, - \rangle_\bDelta}$ is non-zero, then there exists $R \in
\calR (\bd)$ such that $f (R) \neq 0$.
\end{lemma}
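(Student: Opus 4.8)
The plan is to reduce the statement to the case of the generators described in Corollary~\ref{corollary generatingbis} and then to produce, by a semicontinuity argument, a single $R \in \calR(\bd)$ on which all the relevant factors are non-zero. First I would note that by Proposition~\ref{proposition Domokos} the space $\SI[\bDelta, \bd]_{\langle q \cdot \bh, - \rangle_\bDelta}$ is spanned by the functions $c_\bd^V$ with $\theta^V = \langle q \cdot \bh, - \rangle_\bDelta$ and $c_\bd^V \neq 0$; so it suffices to handle such a $c_\bd^V$, and after applying Lemma~\ref{lemma R} we may assume $V \in \add\calR$ with $\bdim V$ a non-negative combination of the $\be_{\lambda,i}$ summing (in the $\bh$-coefficient) to $q$. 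The key point is that $\bdim V$ must be a multiple of $\bh$: indeed, $\theta^V = \langle \bdim V, -\rangle_\bDelta = \langle q\cdot\bh, -\rangle_\bDelta$ means $\langle \bdim V - q\cdot\bh, -\rangle_\bDelta = 0$, and by~\eqref{eq ed} the pairing $\langle \be_{\lambda,i}^n, -\rangle_\bDelta$ detects the $p_{\lambda,\bullet}$-coefficients of any element of $\bR$ via second differences, which forces $\bdim V = q \cdot \bh$ (here one uses that $\bh$ is the unique element of $\bR$ — up to scalar — in the radical of the restricted Tits form, cf.~\eqref{eq h}).

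Once $\bdim V = q\cdot\bh$, I would decompose $V = \bigoplus_k X_k$ into indecomposables in $\calR$ and apply Lemma~\ref{lemma directsum} to get $c_\bd^{X_k}\neq 0$ and $\theta^{X_k}(\bd)=0$ for each $k$. By Lemma~\ref{lemma indecomposable} each $X_k$ is of the form $R_{\lambda_k, i_k+1}^{(n_k)}$ with $p_{\lambda_k,i_k}^\bd = p_{\lambda_k, i_k+n_k}^\bd$ and $p_{\lambda_k,j}^\bd \geq p_{\lambda_k,i_k}^\bd$ on the intervening interval. Now Lemma~\ref{lemma non-zero1} gives, for each $k$, a representation $R_k \in \calR(\bd)$ with $c_\bd^{X_k}(R_k)\neq 0$; equivalently, the non-vanishing locus of $c_\bd^{X_k}$ meets $\calR(\bd)$. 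Since $\calR(\bd)$ is irreducible (by~\cite{DomokosLenzing2002}*{Section~4}, as recalled in the proof of Lemma~\ref{lemma non-zero3}) and each such non-vanishing locus is open, a finite intersection of non-empty open subsets of an irreducible variety is non-empty, so there exists $R \in \calR(\bd)$ with $c_\bd^{X_k}(R)\neq 0$ for all $k$. By Lemma~\ref{lemma directsum} again, $c_\bd^V(R) = \prod_k c_\bd^{X_k}(R) \neq 0$ (up to a non-zero scalar). This settles the statement for each basis element $c_\bd^V$; for a general non-zero $f = \sum_j \xi_j c_\bd^{V_j}$ in the weight space one argues once more that the common non-vanishing locus within $\calR(\bd)$ of a suitable collection is dense, or simply invokes that $f$, being a non-zero regular function, cannot vanish on the dense open set where all the $c_\bd^{V_j}$ that appear in $f$ are simultaneously non-zero — more carefully, since $\calR(\bd)$ is irreducible and $f$ is not identically zero on it (it is not identically zero on $\rep_\bDelta(\bd)$ and... see below), $f(R)\neq 0$ for generic $R \in \calR(\bd)$.

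The main obstacle is precisely the last sentence: one must know that $f$ does not vanish identically on $\calR(\bd)$, not merely on $\rep_\bDelta(\bd)$. The cleanest route is to observe that this is exactly what Lemma~\ref{lemma non-zero3} already gives once we reduce to a single $c_\bd^V$; but a non-zero linear combination $f = \sum_j \xi_j c_\bd^{V_j}$ could a priori vanish on $\calR(\bd)$ even when no single summand does. I expect the intended argument sidesteps this by first showing that the restriction map $\SI[\bDelta,\bd]_{\langle q\cdot\bh,-\rangle_\bDelta} \to \Bbbk[\calR(\bd)]$ is \emph{injective} — which would follow if $\calR(\bd)$ is dense in $\rep_\bDelta(\bd)$ (true when $\bd$ is the dimension vector of a representation in $\add\calR$, which holds since $p = p^\bd > 0$ forces $\bd \in \bR$ and the generic representation of dimension vector $\bd$ lies in $\calR(\bd)$) together with the fact that $\calR(\bd)$ is open; density plus openness means $\calR(\bd)$ is dense open, so no non-zero regular function on $\rep_\bDelta(\bd)$ vanishes on it, and in particular no non-zero semi-invariant does. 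With that observation in hand the lemma is immediate: $f \neq 0$ implies $f|_{\calR(\bd)}\neq 0$, hence $f(R)\neq 0$ for some $R\in\calR(\bd)$. The work to be checked is thus the density of $\calR(\bd)$ in $\rep_\bDelta(\bd)$ for $\bd\in\bR$ with $p^\bd>0$, which should follow from the description of generic representations in $\add\calR$ recalled at the end of Section~\ref{section tubular} (the ext-minimal representation for $\bd$ has a dense orbit inside $\calR(\bd)$, and $\calR(\bd)$ is open, hence its closure — being irreducible of the right dimension — can be identified with a component of $\rep_\bDelta(\bd)$; since $\rep_\bDelta(\bd)$ is irreducible here, they coincide).
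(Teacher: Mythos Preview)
Your first half --- reducing to a single $c_\bd^V$ with $\bdim V = q\cdot\bh$ and producing $R\in\calR(\bd)$ with $c_\bd^V(R)\neq 0$ --- is correct but is exactly Lemma~\ref{lemma non-zero3}, which the paper has already proved. You correctly identify that this does not settle the lemma, because a non-zero linear combination $f=\sum_j\xi_j c_\bd^{V_j}$ could in principle vanish on $\calR(\bd)$ even if no summand does. Your proposed fix is where the gap lies.

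The gap is the assertion that $\calR(\bd)$ is dense in $\rep_\bDelta(\bd)$, which you reduce to the claim that $\rep_\bDelta(\bd)$ is irreducible. Nothing in the paper establishes this, and you give no argument for it beyond ``since $\rep_\bDelta(\bd)$ is irreducible here''. For a bound quiver with relations there is no a priori reason for $\rep_\bDelta(\bd)$ to be irreducible; the paper only cites that $\calR(\bd)$ is open and irreducible, not that it is dense. (Note also that for $p^\bd>0$ the ext-minimal representations in $\add\calR$ of dimension $\bd$ are not unique up to isomorphism, so there is no single dense orbit as you suggest.) If density were available here, it would immediately yield the much stronger Proposition~\ref{proposition nonzero}, making the intermediate Lemmas~\ref{lemma non-zero2} and~\ref{lemma non-zero} and Proposition~\ref{proposition isomorphism} superfluous; the paper's structure indicates density is not to be assumed.

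The paper's argument turns the problem around. Rather than finding where $f$ is non-zero, it shows that every spanning element $c_\bd^V$ with $\bdim V = q\cdot\bh$ vanishes on all of $\rep_\bDelta(\bd)\setminus\calR(\bd)$: any $M\notin\calR(\bd)$ has an indecomposable direct summand $Q\in\calQ$ (found via~\eqref{eq P}, \eqref{eq Q}, \eqref{eq h}), and then $\Hom_\bDelta(V,M)\supseteq\Hom_\bDelta(V,Q)\neq 0$ since $\langle q\cdot\bh,\bdim Q\rangle_\bDelta>0$, so $c_\bd^V(M)=0$. Hence any linear combination $f$ of such $c_\bd^V$ also vanishes outside $\calR(\bd)$; being non-zero on $\rep_\bDelta(\bd)$, it must be non-zero at some point of $\calR(\bd)$. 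This avoids any irreducibility or density hypothesis on $\rep_\bDelta(\bd)$.
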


\begin{proof}
If $q = 0$, then the claim is obvious, since $\SI [\bDelta, \bd]_0
= \Bbbk$. Thus assume $q > 0$. We know that $\SI [\bDelta,
\bd]_{\langle q \cdot \bh, - \rangle_\bDelta}$ is spanned by the
functions $c_\bd^V$ for $V \in \add \calR$ with dimension vector
$q \cdot \bh$. It is enough to prove that $c_\bd^V (M) = 0$ for
all $V \in \add \calR$ and $M \in \rep_\bDelta (\bd)$ such that
$\bdim V = q \cdot \bh$ and $M \not \in \calR (\bd)$. Every such
$M$ has an indecomposable direct summand $Q$ from $\calQ$. Indeed,
since $M \not \in \calR (\bd)$, it has an indecomposable direct
summand $X$ which belongs to $\calP \cup \calQ$. If $X \in \calQ$,
then we take $Q := X$. If $X \in \calP$, then $\langle \bdim M -
\bdim X, \bh \rangle_\bDelta < 0$ by~\eqref{eq P} and~\eqref{eq
h}. Consequently, $M$ has an indecomposable direct summand $Q$
with $\langle \bdim Q, \bh \rangle_\bDelta < 0$. Using
again~~\eqref{eq P} and~\eqref{eq h} we get $Q \in \calQ$. Then
\[
\dim_\Bbbk \Hom_\bDelta (V, M) \geq \dim_\Bbbk \Hom_\bDelta (V, Q)
= \langle q \cdot \bh, \bdim Q \rangle_\bDelta > 0
\]
by~\eqref{eq Q} and the claim follows.
\end{proof}

Recall from Corollary~\ref{corollary spanningbis} that the
possible weights are of the form $\langle \br, - \rangle_\bDelta$
for $\br \in \bR$ such that $\langle \br, \bd \rangle_\bDelta =
0$. Our next aim is to show that it is enough to understand those
which are for the form $\langle q \cdot \bh, - \rangle_\bDelta$
for $q \in \bbN$.

We start with the following easy lemma.

\begin{lemma} \label{lemma non-zero}
Let $W \in \add \calR$ be such that $\theta^W (\bd) = 0$ and
$c_\bd^W \neq 0$. If $q \in \bbN$ and $f \in \SI [\bDelta,
\bd]_{\langle q \cdot \bh, - \rangle_\bDelta}$ is non-zero, then
there exists $R \in \calR (\bd)$ such that $c_\bd^W (R) \cdot f
(R) \neq 0$.
\end{lemma}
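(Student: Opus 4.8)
The plan is to combine the two non-vanishing facts we already have (Lemma~\ref{lemma non-zero3} for $c_\bd^W$ and Lemma~\ref{lemma non-zero2} for $f$) with the irreducibility of $\calR(\bd)$, which is the standard trick used already in the proof of Lemma~\ref{lemma non-zero3}. Concretely, first I would apply Lemma~\ref{lemma non-zero3} to $W$: since $\theta^W(\bd)=0$ and $c_\bd^W\neq 0$, there is some $R_1\in\calR(\bd)$ with $c_\bd^W(R_1)\neq 0$. Hence the open subset $\{M\in\rep_\bDelta(\bd) : c_\bd^W(M)\neq 0\}$ meets $\calR(\bd)$, so it is a non-empty open subset of $\calR(\bd)$.

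Next I would apply Lemma~\ref{lemma non-zero2} to the non-zero semi-invariant $f\in\SI[\bDelta,\bd]_{\langle q\cdot\bh,-\rangle_\bDelta}$: there is some $R_2\in\calR(\bd)$ with $f(R_2)\neq 0$, so $\{M : f(M)\neq 0\}$ is likewise a non-empty open subset of $\calR(\bd)$. Now the key point is that $\calR(\bd)$ is irreducible (by \cite{DomokosLenzing2002}*{Section~4}, exactly as cited in the proof of Lemma~\ref{lemma non-zero3}), so any two non-empty open subsets of it intersect. Therefore the intersection $\{M\in\calR(\bd) : c_\bd^W(M)\neq 0\}\cap\{M\in\calR(\bd) : f(M)\neq 0\}$ is non-empty, and any $R$ in it satisfies $c_\bd^W(R)\cdot f(R)\neq 0$, which is the claim.

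The only mild subtlety — and hence the closest thing to an obstacle — is making sure that $c_\bd^W$ and $f$ are genuinely defined and regular on $\rep_\bDelta(\bd)$ so that their non-vanishing loci are honest Zariski-open sets: $c_\bd^W$ is a well-defined element of $\Bbbk[\rep_\bDelta(\bd)]$ because $\theta^W(\bd)=0$ (via~\eqref{eq thetad}), and $f\in\SI[\bDelta,\bd]_{\langle q\cdot\bh,-\rangle_\bDelta}\subseteq\Bbbk[\rep_\bDelta(\bd)]$ by definition, so both non-vanishing loci are open. After that the argument is purely the ``two non-empty opens in an irreducible space intersect'' principle, so there is no real computation to grind through.
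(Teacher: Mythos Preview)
Your proposal is correct and is exactly the argument the paper gives, just spelled out in full: the paper's proof is the single sentence ``Since $\calR(\bd)$ is an open irreducible subset of $\rep_\bDelta(\bd)$, the claim follows from Lemmas~\ref{lemma non-zero3} and~\ref{lemma non-zero2}.'' Your expansion of this into the explicit ``two non-empty opens in an irreducible space intersect'' argument is precisely what is intended.
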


\begin{proof}
Since $\calR (\bd)$ is an open irreducible subset of $\rep_\bDelta
(\bd)$, the claim follows from Lemmas~\ref{lemma non-zero3}
and~\ref{lemma non-zero2}.
\end{proof}

\begin{proposition} \label{proposition isomorphism}
Let $\br \in \bR$, $\langle \br, \bd \rangle_\bDelta = 0$ and $W
\in \add \calR$ be an ext-minimal representation for $\br -
p^{\br} \cdot \bh$.
\begin{enumerate}

\item \label{point isomorphism1}
If $c_\bd^W = 0$, then $\SI [\bDelta, \bd]_{\langle \br, -
\rangle_\bDelta} = 0$.

\item \label{point isomorphism2}
If $c_\bd^W \neq 0$, then the map
\[
\SI [\bDelta, \bd]_{\langle p^\br \cdot \bh, - \rangle_\bDelta}
\to \SI [\bDelta, \bd]_{\langle \br, - \rangle_\bDelta}, f \mapsto
c_\bd^W \cdot f,
\]
is an isomorphism of vector spaces.

\end{enumerate}
\end{proposition}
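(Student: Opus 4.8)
The key point is that $\br = (\br - p^\br \cdot \bh) + p^\br \cdot \bh$, and that the ``new'' part of the weight compared with $\langle p^\br \cdot \bh, - \rangle_\bDelta$ is exactly accounted for by the ext-minimal representation $W$ with dimension vector $\br - p^\br \cdot \bh$. Concretely, since $\br - p^\br \cdot \bh \in \bR$ has $p$-value zero, Lemma~\ref{lemma R} (together with $\pdim_\bDelta W \leq 1$) gives $\theta^W = \langle \bdim W, - \rangle_\bDelta = \langle \br - p^\br \cdot \bh, - \rangle_\bDelta$, provided $c_\bd^W \neq 0$; and by~\eqref{eq h} we also have $\langle p^\br \cdot \bh, \bd \rangle_\bDelta = 0$, so both weights $\langle \br, - \rangle_\bDelta$ and $\langle p^\br \cdot \bh, - \rangle_\bDelta$ vanish on $\bd$. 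Multiplication by $c_\bd^W$ therefore maps weight $\langle p^\br \cdot \bh, - \rangle_\bDelta$ to weight $\langle p^\br \cdot \bh, - \rangle_\bDelta + \theta^W = \langle \br, - \rangle_\bDelta$, so the map in~\eqref{point isomorphism2} is at least well-defined.

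\textbf{Part~\eqref{point isomorphism1}.} Suppose $c_\bd^W = 0$ but $\SI[\bDelta, \bd]_{\langle \br, - \rangle_\bDelta} \neq 0$. By Corollary~\ref{corollary spanning} there is an ext-minimal $V \in \rep \bDelta$ with $\theta^V = \langle \br, - \rangle_\bDelta$ and $c_\bd^V \neq 0$; by Lemma~\ref{lemma R}, $V \in \add\calR$ and $\bdim V = \br$. Now apply the last displayed fact recalled at the end of Section~\ref{section tubular}: since $V$ is ext-minimal in $\add\calR$ with dimension vector $\br$ and $W$ is ext-minimal in $\add\calR$ with dimension vector $\br - p^\br \cdot \bh$, there is an exact sequence $0 \to \bigoplus_{\lambda} R_\lambda \to V \to W \to 0$ with each $R_\lambda \in \calR_\lambda$ and $\bdim\bigl(\bigoplus_\lambda R_\lambda\bigr) = p^\br \cdot \bh$. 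Since $V$ is ext-minimal and $V = W \oplus \bigoplus_\lambda R_\lambda$ is impossible unless the sequence splits, ext-minimality forces... — more carefully: apply Lemma~\ref{lemma zero} to this sequence. We have $\theta^V(\bd) = \langle \br, \bd\rangle_\bDelta = 0$ and $c_\bd^V \neq 0$, so $\theta^W(\bd) \leq 0$; but $\theta^W(\bd) = \langle \br - p^\br \cdot \bh, \bd\rangle_\bDelta = 0$ by~\eqref{eq h}. Hence we are in the borderline case, and I would instead argue directly: the subrepresentation $\bigoplus_\lambda R_\lambda$ has dimension vector $p^\br \cdot \bh$, so $\theta^{\bigoplus_\lambda R_\lambda}(\bd) = \langle p^\br \cdot \bh, \bd\rangle_\bDelta = 0$; by Corollary~\ref{corollary multsemi}\eqref{point multsemi1} applied to this sequence (using $\theta^V(\bd) = 0$), $c_\bd^V = c_\bd^{\bigoplus_\lambda R_\lambda} \cdot c_\bd^W$ up to a scalar, and since $c_\bd^V \neq 0$ we get $c_\bd^W \neq 0$, contradicting the hypothesis. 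This proves~\eqref{point isomorphism1}.

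\textbf{Part~\eqref{point isomorphism2}, injectivity.} The map $f \mapsto c_\bd^W \cdot f$ is injective because $\Bbbk[\rep_\bDelta(\bd)]$ — or rather the coordinate ring of the irreducible variety $\calR(\bd)$, into which restriction is injective by Lemma~\ref{lemma non-zero2} on the relevant graded piece — is an integral domain and $c_\bd^W$ restricts to a nonzero function: indeed by Lemma~\ref{lemma non-zero3} there is $R \in \calR(\bd)$ with $c_\bd^W(R) \neq 0$, and $\calR(\bd)$ is irreducible. More precisely, if $c_\bd^W \cdot f = 0$ in $\Bbbk[\rep_\bDelta(\bd)]$ then it vanishes on $\calR(\bd)$, and since $\Bbbk[\calR(\bd)]$ is a domain and $c_\bd^W|_{\calR(\bd)} \neq 0$, we get $f|_{\calR(\bd)} = 0$; then Lemma~\ref{lemma non-zero2} forces $f = 0$.

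\textbf{Part~\eqref{point isomorphism2}, surjectivity.} This is the main obstacle. Take any nonzero $g \in \SI[\bDelta, \bd]_{\langle \br, - \rangle_\bDelta}$; I must produce $f \in \SI[\bDelta, \bd]_{\langle p^\br \cdot \bh, - \rangle_\bDelta}$ with $c_\bd^W \cdot f = g$. By Corollary~\ref{corollary spanning} it suffices to treat $g = c_\bd^V$ for an ext-minimal $V \in \add\calR$ with $\bdim V = \br$ and $c_\bd^V \neq 0$. Apply the exact sequence recalled from Section~\ref{section tubular}: $0 \to \bigoplus_\lambda R_\lambda \to V \to W' \to 0$ where $W'$ is an ext-minimal representation in $\add\calR$ for $\br - p^\br \cdot \bh$ and $\bdim\bigl(\bigoplus_\lambda R_\lambda\bigr) = p^\br \cdot \bh$. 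Since ext-minimal representations in $\add\calR$ with a fixed dimension vector having $p$-value zero are unique up to isomorphism (Lemma~\ref{lemma extminimal} and the surrounding discussion), $W' \simeq W$, so $c_\bd^{W'} = c_\bd^W$ up to a scalar. Now $\theta^{\bigoplus_\lambda R_\lambda}(\bd) = \langle p^\br \cdot \bh, \bd\rangle_\bDelta = 0$ and $\theta^W(\bd) = 0$; since $c_\bd^V \neq 0$, Lemma~\ref{lemma directsum} is not directly available, but Lemma~\ref{lemma zero} applied to the sequence (with $\theta^V(\bd) = 0$, $c_\bd^V \neq 0$) gives $\theta^{W'}(\bd) \leq 0$, which holds with equality, and more to the point: Corollary~\ref{corollary multsemi}\eqref{point multsemi1} gives $c_\bd^V = c_\bd^{\bigoplus_\lambda R_\lambda} \cdot c_\bd^{W'}$ up to a scalar. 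Hence $c_\bd^V = c_\bd^{\bigoplus_\lambda R_\lambda} \cdot c_\bd^W$ (up to a scalar), and since $c_\bd^V \neq 0$ both factors are nonzero. Setting $f := c_\bd^{\bigoplus_\lambda R_\lambda}$, which lies in $\SI[\bDelta, \bd]_{\theta^{\bigoplus_\lambda R_\lambda}} = \SI[\bDelta, \bd]_{\langle p^\br \cdot \bh, - \rangle_\bDelta}$, gives $c_\bd^W \cdot f = g$ up to a scalar, completing surjectivity and hence the proof.
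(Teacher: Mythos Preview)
Your proof is correct and follows essentially the same approach as the paper: both use the exact sequence $0 \to R \to V \to W \to 0$ (from the end of Section~\ref{section tubular}) together with Corollary~\ref{corollary multsemi}\eqref{point multsemi1} to factor $c_\bd^V = c_\bd^R \cdot c_\bd^W$, which simultaneously gives surjectivity and part~\eqref{point isomorphism1}, and both derive injectivity from the irreducibility of $\calR(\bd)$ via Lemmas~\ref{lemma non-zero3} and~\ref{lemma non-zero2} (which the paper packages as Lemma~\ref{lemma non-zero}). Your exposition has a few unnecessary detours---the aborted appeal to Lemma~\ref{lemma zero}, the intermediate $W'$ (the cited fact already lets you take $W$ as the quotient directly), and the reference to Lemma~\ref{lemma R} for $\theta^W = \langle \bdim W, - \rangle_\bDelta$ when $\pdim_\bDelta W \leq 1$ suffices---but the logical skeleton matches the paper's.
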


\begin{proof}
Let $\Phi : \SI [\bDelta, \bd]_{\langle p^\br \cdot \bh, -
\rangle_\bDelta} \to \SI [\bDelta, \bd]_{\langle \br, -
\rangle_\bDelta}$ be the map given by $\Phi (f) := c_\bd^W \cdot
f$, for $f \in \SI [\bDelta, \bd]_{\langle p^\br \cdot \bh, -
\rangle_\bDelta}$.

It follows from Corollary~\ref{corollary spanning} and
Lemma~\ref{lemma R} that $\SI [\bDelta, \bd]_{\langle \br, -
\rangle_\bDelta}$ is spanned by the functions $c_\bd^V$ for
ext-minimal $V \in \add \calR$ such that $\bdim V = \br$. If $V
\in \add \calR$ is ext-minimal and $\bdim V = \br$, then there
exists an exact sequence $0 \to R \to V \to W \to 0$, where $R \in
\add \calR$ and $\bdim R = p^\br \cdot \bh$. Thus
Corollary~\ref{corollary multsemi}\eqref{point multsemi1} implies
that (up to a non-zero scalar) $c_\bd^V = c_\bd^W \cdot c_\bd^R =
\Phi (c_\bd^R)$. This shows that $\Phi$ is an epimorphism. In
particular, $\SI [\bDelta, \bd]_{\langle \br, - \rangle_\bDelta} =
0$ if $c_\bd^W = 0$. On the other hand, if $c_\bd^W \neq 0$, then
$\Phi$ is a monomorphism (hence an isomorphism) by
Lemma~\ref{lemma non-zero}.
\end{proof}

In the previous papers on the subject the authors have studied
either the semi-invariants on the whole variety $\rep_\bDelta
(\bd)$~\cites{DomokosLenzing2000, DomokosLenzing2002} or on the
closure of $\calR (\bd)$ only~\cite{SkowronskiWeyman1999}.
However, the answers they have obtained did not differ. We have
the following explanation of this phenomena.

\begin{proposition} \label{proposition nonzero}
If $f \in \Bbbk [\rep_\bDelta (\bd)]$ is a non-zero
semi-invariant, then there exists $R \in \calR (\bd)$ such that $f
(R) \neq 0$.
\end{proposition}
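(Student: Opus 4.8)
The goal is to show that every non-zero semi-invariant $f \in \Bbbk[\rep_\bDelta(\bd)]$ is non-zero somewhere on $\calR(\bd)$. By Proposition~\ref{proposition Domokos} it suffices to treat the homogeneous case, i.e.\ $f \in \SI[\bDelta,\bd]_\theta$ for a single weight $\theta$, since a general $f$ is a finite sum of homogeneous pieces of distinct weights, and these cannot cancel on the closure of the irreducible set $\calR(\bd)$ unless each already vanishes there --- more precisely, if some homogeneous component is non-zero on $\calR(\bd)$ we are done, so assume toward a contradiction that every homogeneous component vanishes identically on $\calR(\bd)$, and then the whole sum vanishes on $\calR(\bd)$, but homogeneous components of different weights are linearly independent over $\Bbbk[\rep_\bDelta(\bd)]$ and in fact the restriction map to $\overline{\calR(\bd)}$ is still graded, forcing each component to be zero as a global function. (Cleaner: just reduce directly to the homogeneous case and prove it there.)

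So fix $\theta$ with $\SI[\bDelta,\bd]_\theta \neq 0$. By Corollary~\ref{corollary spanningbis} there is $\br \in \bR$ with $\theta = \langle \br, - \rangle_\bDelta$ and $\langle \br, \bd \rangle_\bDelta = 0$. Let $W \in \add\calR$ be an ext-minimal representation of dimension vector $\br - p^\br \cdot \bh$. Since $\SI[\bDelta,\bd]_\theta \neq 0$, Proposition~\ref{proposition isomorphism}\eqref{point isomorphism1} forces $c_\bd^W \neq 0$, and part~\eqref{point isomorphism2} gives an isomorphism
\[
\SI[\bDelta,\bd]_{\langle p^\br \cdot \bh, - \rangle_\bDelta} \xrightarrow{\ \sim\ } \SI[\bDelta,\bd]_\theta, \qquad g \mapsto c_\bd^W \cdot g.
\]
Hence our non-zero $f$ equals $c_\bd^W \cdot g$ (up to scalar) for some non-zero $g \in \SI[\bDelta,\bd]_{\langle p^\br \cdot \bh, - \rangle_\bDelta}$, with $p^\br \in \bbN$.

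Now apply Lemma~\ref{lemma non-zero}: since $W \in \add\calR$ satisfies $\theta^W(\bd) = 0$ (because $\langle \br, \bd\rangle_\bDelta = \langle p^\br \bh, \bd\rangle_\bDelta + \langle \br - p^\br\bh, \bd\rangle_\bDelta$ and $\langle \bh, \bd\rangle_\bDelta = 0$ by~\eqref{eq h}, so $\langle \bdim W, \bd\rangle_\bDelta = 0$, i.e.\ $\theta^W(\bd) = 0$) and $c_\bd^W \neq 0$, and since $g$ is a non-zero element of $\SI[\bDelta,\bd]_{\langle q \cdot \bh, - \rangle_\bDelta}$ with $q = p^\br$, the lemma produces $R \in \calR(\bd)$ with $c_\bd^W(R) \cdot g(R) \neq 0$. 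Therefore $f(R) \neq 0$, as required.

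The only point needing care is the reduction from an arbitrary semi-invariant to a homogeneous one --- but here the non-trivial analytic content is already packaged in Lemma~\ref{lemma non-zero} (itself resting on the irreducibility and openness of $\calR(\bd)$ together with Lemmas~\ref{lemma non-zero3} and~\ref{lemma non-zero2}), so no essentially new obstacle arises. Indeed, writing $f = \sum_{\theta} f_\theta$ with $f_\theta \in \SI[\bDelta,\bd]_\theta$, if every $f_\theta$ vanished identically on the irreducible variety $\calR(\bd)$ then, since each $f_\theta$ is a regular function on $\rep_\bDelta(\bd)$ and $\overline{\calR(\bd)}$ is a subvariety, each $f_\theta$ would lie in the ideal of $\overline{\calR(\bd)}$; but $f \neq 0$ means some $f_\theta \neq 0$, and for that $\theta$ the homogeneous case just proved gives $R \in \calR(\bd)$ with $f_\theta(R) \neq 0$, a contradiction. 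Hence some component is non-zero on $\calR(\bd)$; running the homogeneous argument for that component finishes the proof.
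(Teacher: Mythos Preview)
Your proof is correct and follows essentially the same route as the paper: write $f$ as $c_\bd^W \cdot f'$ via Proposition~\ref{proposition isomorphism}, then invoke Lemma~\ref{lemma non-zero}. The paper simply begins by fixing $\br$ with $f \in \SI[\bDelta,\bd]_{\langle \br,-\rangle_\bDelta}$, tacitly treating $f$ as homogeneous; your extra discussion of the reduction to the homogeneous case (via linear independence of characters on the $\GL(\bd)$-stable set $\calR(\bd)$) makes this step explicit, though the exposition there is a bit repetitive and could be tightened to a single sentence.
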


\begin{proof}
Fix $\br \in \bR$ such that $f \in \SI [\bDelta, \bd]_{\langle
\br, - \rangle_\bDelta}$. The previous lemma implies that $f =
c_\bd^W \cdot f'$, where $W \in \add \calR$ is an ext-minimal
representation with dimension vector $\br - p^{\br} \cdot \bh$ and
$f' \in \SI [\bDelta, \bd]_{\langle p^{\br} \cdot \bh, -
\rangle_\bDelta}$. Consequently, the claim follows from
Lemma~\ref{lemma non-zero}.
\end{proof}

Observe that this proposition means in particular, that $\SI
[\bDelta, \bd]$ is a domain, hence the product of two non-zero
semi-invariants is non-zero again.

Proposition~\ref{proposition isomorphism} implies that the
subalgebra $\bigoplus_{q \in \bbN} \SI [\bDelta, \bd]_{\langle q
\cdot \bh, - \rangle_\bDelta}$ of $\SI [\bDelta, \bd]$ plays a
crucial role. In Section~\ref{section result} we show that the
study of this subalgebra can be reduced to the case of the
Kronecker quiver. Thus in the next section we recall facts about
semi-invariants for the Kronecker quiver.

\section{The Kronecker quiver} \label{section Kronecker}

Our aim in this section is to collect necessary facts about
representations and semi-invariants for the Kronecker quiver
$K_2$, i.e.\ the quiver $\xymatrix{\vertexU{1} & \vertexU{2}
\ar@/_/[l]_\alpha \ar@/^/[l]^\beta}$ with the empty set of
relations. In this case a sincere separating exact subcategory is
uniquely determined. Let $\calT = \coprod_{\lambda \in
\bbP_\Bbbk^1} \calT_\lambda$ by the sincere separating exact
subcategory of $\ind K_2$.

For $\zeta, \xi \in \Bbbk$ let $N_{\zeta, \xi}$ be the
representation $\xymatrix{\Bbbk & \Bbbk \ar@/_/[l]_\zeta
\ar@/^/[l]^\xi}$. Then the simple objects in $\add \calT$ are
precisely the representations $N_{\zeta, \xi}$ for $(\zeta : \xi)
\in \bbP_k^1$. Moreover, if $(\zeta :\xi), (\zeta' : \xi') \in
\bbP_\Bbbk^1$, then $N_{\zeta, \xi} \simeq N_{\zeta', \xi'}$ if
and only if $(\zeta : \xi) = (\zeta' : \xi')$. Consequently, by
abuse of notation, we will denote $N_{\zeta, \xi}$ by $N_{(\zeta :
\xi)}$ for $(\zeta : \xi) \in \bbP_k^1$. By choosing our
parameterization appropriately we may assume that $N_\lambda \in
\calT_\lambda$ for each $\lambda \in \bbP_k^1$. In particular,
$\tau N_\lambda = N_\lambda$ for each $\lambda \in \bbP_\Bbbk^1$.

The Kronecker quiver can be viewed as the minimal
concealed-canon\-ical bound quiver. Namely, we can embed the
category $\rep K_2$ into the category of representations of an
arbitrary concealed-canonical quiver. We describe a construction
of such an embedding more precisely.

Let $\bDelta$ be a concealed-canonical bound quiver with a sincere
separating exact subcategory $\calR$ of $\ind \bDelta$. Let $R :=
\bigoplus_{\lambda \in \bbP_\Bbbk^1} \bigoplus_{i \in I_\lambda}
R_{\lambda, i}$ for subsets $I_\lambda \subseteq [0, r_\lambda -
1]$ such that $|I_\lambda| = r_\lambda - 1$ (in particular,
$I_\lambda = \varnothing$ if $r_\lambda = 1$), where we use
notation introduced in Section~\ref{section tubular}. Let
$R^\perp$ denote the full subcategory of $\rep \bDelta$, whose
objects are $M \in \rep \bDelta$ such that $\Hom_\bDelta (R, M) =
0 = \Ext_\bDelta^1 (R, M)$. Lenzing and de la
Pe\~na~\cite{LenzingdelaPena1999}*{Proposition~4.2} have proved
that there exists a fully faithful exact functor $F : \rep K_2 \to
\rep \bDelta$ which induces an equivalence between $\rep K_2$ and
$R^\perp$. Moreover, $F$ induces an equivalence between $\calT$
and $R^\perp \cap \calR$. The simple objects in $R^\perp \cap
(\add \calR)$, which are the images of the simple objects in $\add
\calT$, are of the form $R_{\lambda, i_\lambda}^{(r_\lambda)}$ for
$\lambda \in \bbP_\Bbbk^1$, where for $\lambda \in \bbP_\Bbbk^1$
we denote by $i_\lambda$ the unique element of $[0, r_\lambda - 1]
\setminus I_\lambda$. Consequently, (if we choose appropriate
parameterization) $F (N_\lambda) \simeq R_{\lambda,
i_\lambda}^{(r_\lambda)}$ for each $\lambda \in \bbP_\Bbbk^1$.

Let $p \in \bbN$. We define the functions $f^{(0)}_{(p, p)},
\ldots, f^{(p)}_{(p, p)} \in \Bbbk [\rep_{K_2} (p, p)]$ by the
condition: if $V \in \rep_{K_2} (p, p)$, then
\[
\det (S \cdot V_\alpha - T \cdot V_\beta) = \sum_{i \in [0, p]}
S^i \cdot T^{p - i} \cdot f^{(i)}_{(p, p)} (V).
\]
Note that $f^{(0)}_{(p, p)}$, \ldots, $f^{(p)}_{(p, p)}$ are
semi-invariants of weight $(-1, 1)$. If $(\zeta : \xi) \in
\bbP_\Bbbk^1$, then (by choosing a projective presentation of
$N_{\zeta, \xi}$ in an appropriate way) we get
\begin{equation} \label{eq cT}
c_{(p, p)}^{N_{\zeta, \xi}} (V) = \det (\xi \cdot V_\alpha - \zeta
\cdot V_\beta) = \sum_{i \in [0, p]} \xi^i \cdot \zeta^{p - i}
\cdot f^{(i)}_{(p, p)} (V).
\end{equation}
It is well known (see for example~\cite{SkowronskiWeyman2000})
that $\SI [K_2, (p, p)]$ is the polynomial algebra in
$f^{(0)}_{(p, p)}$, \ldots, $f^{(p)}_{(p, p)}$. In particular,
\begin{equation} \label{eq dimension}
\dim_\Bbbk \SI [K_2, (p, p)]_{(-q, q)} = \binom{q + p}{q}
\end{equation}
for each $q \in \bbN$.

We will need the following lemma.

\begin{lemma} \label{lemma equality}
If $f_1, f_2 \in \SI [K_2, (p, p)]_{(-1, 1)}$ and
\[
\{ V \in \rep_{K_2} (p, p) : \text{$f_1 (V) = 0$} \} = \{ V \in
\rep_{K_2} (p, p) : \text{$f_2 (V) = 0$} \},
\]
then \textup{(}up to a non-zero scalar\textup{)} $f_1 = f_2$.
\end{lemma}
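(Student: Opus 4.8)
The plan is to work in the coordinate ring $\Bbbk[\rep_{K_2}(p,p)]$, which by \eqref{eq dimension} and the preceding remark is the polynomial ring $\Bbbk[f^{(0)}_{(p,p)},\dots,f^{(p)}_{(p,p)}]$, and in particular a unique factorization domain. Both $f_1$ and $f_2$ lie in the $(p{+}1)$-dimensional homogeneous piece $\SI[K_2,(p,p)]_{(-1,1)}$, which is spanned by $f^{(0)}_{(p,p)},\dots,f^{(p)}_{(p,p)}$; so each of them is a linear combination of these basis elements. By \eqref{eq cT}, a generic linear combination $\sum_{i\in[0,p]}\xi^i\zeta^{p-i}f^{(i)}_{(p,p)}$ equals $c_{(p,p)}^{N_{\zeta,\xi}}$, whose vanishing locus is $\{V : \Hom_{K_2}(N_{\zeta,\xi},V)\neq 0\}$. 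The first step is therefore to identify, via \eqref{eq cT}, the vanishing loci of the basis vectors and their combinations geometrically.

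Next I would argue about irreducibility. Each $c_{(p,p)}^{N_\lambda}$ cuts out the set of $V$ having $N_\lambda$ as a subrepresentation; one checks (using that $\rep_{K_2}(p,p)$ is irreducible and a dimension count, or directly) that this zero set is an irreducible hypersurface, so $c_{(p,p)}^{N_\lambda}$ is, up to scalar, an irreducible polynomial — indeed the basis $f^{(0)}_{(p,p)},\dots,f^{(p)}_{(p,p)}$ consists of irreducible, pairwise non-associate elements of the UFD (they are the "coordinate" coefficients of a binary form). The key step is then: an element $f=\sum a_i f^{(i)}_{(p,p)}$ of $\SI[K_2,(p,p)]_{(-1,1)}$ has, up to scalar, exactly one irreducible factor for each projective root $(\zeta:\xi)$ of the binary form $\sum a_i\xi^i\zeta^{p-i}$, with multiplicity equal to the multiplicity of that root; this is because specialising the parameter $(\zeta:\xi)$ in \eqref{eq cT} realises such $f$ as $c_{(p,p)}^{N}$ for $N$ a (possibly decomposable) module in $\add\calT$, and $c^{N}_{(p,p)}$ factors according to the decomposition of $N$ by Lemma~\ref{lemma directsum} and Corollary~\ref{corollary multsemi}. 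Hence the vanishing locus $\{f_1=0\}$, as a set, records precisely the unordered set of roots of the binary form attached to $f_1$ (ignoring multiplicities), and likewise for $f_2$.

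From the hypothesis $\{f_1=0\}=\{f_2=0\}$ it follows that the binary forms $\phi_1(\xi,\zeta):=\sum a_i^{(1)}\xi^i\zeta^{p-i}$ and $\phi_2(\xi,\zeta):=\sum a_i^{(2)}\xi^i\zeta^{p-i}$ have the same set of projective roots. Since both $\phi_1$ and $\phi_2$ are binary forms of the same degree $p$ (as $f_1,f_2$ lie in the degree-$p$ piece and neither is zero), and a nonzero binary form of degree $p$ is determined up to scalar and root multiplicities by its roots, the only remaining issue is multiplicities. Here I would invoke reducedness: each $f^{(i)}_{(p,p)}$ being irreducible and the whole ring a UFD, $f_1$ and $f_2$ have the same radical only if their irreducible factorisations use the same primes; but both are homogeneous of degree $p$ in the original variables while each prime factor $c^{N_\lambda}_{(p,p)}$ has degree $p$, so each of $f_1,f_2$ is a single prime $c^{N_\lambda}_{(p,p)}$ — forcing $\phi_1,\phi_2$ to each have a single root of full multiplicity $p$ — and the equality of zero sets then gives $\lambda_1=\lambda_2$, whence $f_1$ and $f_2$ are associates. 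Actually the clean way to finish is: in a UFD, two elements with equal vanishing set (equivalently equal radical) that are both of the same total degree and are products of the \emph{same multiset} of degree-one-hypersurface primes must be scalar multiples of one another; since $\deg f_1=\deg f_2=p$ and each prime factor contributes degree $p$, each is a single prime, so equal radicals force equal primes up to scalar. The main obstacle is the last paragraph's bookkeeping of multiplicities: one must rule out, say, $f_1$ corresponding to a form with a double root and $f_2$ to one with simple roots sharing the same support; this is exactly where the degree constraint (total degree $p$, each irreducible factor of degree $p$) is doing the work, and I would be careful to state it as a clean lemma about binary forms rather than leave it implicit.
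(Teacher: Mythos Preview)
There is a genuine gap. Your opening sentence already conflates two rings: $\Bbbk[\rep_{K_2}(p,p)]$ is the polynomial ring in the $2p^2$ matrix entries, not $\Bbbk[f^{(0)}_{(p,p)},\dots,f^{(p)}_{(p,p)}]$; the latter is $\SI[K_2,(p,p)]$, a proper subring. More seriously, the central claim---that $f=\sum a_i f^{(i)}_{(p,p)}$ factors in $\Bbbk[\rep_{K_2}(p,p)]$ as a product of $c^{N_\lambda}_{(p,p)}$'s indexed by the roots of the binary form $\phi=\sum a_i\xi^i\zeta^{p-i}$---is false. A representation $N\in\add\calT$ with $k$ indecomposable summands has $\theta^N=(-k,k)$, so $c^N_{(p,p)}$ lies in weight $(-k,k)$; to land in weight $(-1,1)$ you would need $N$ indecomposable, i.e.\ $N=N_\lambda$ for a single $\lambda$. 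But the $c^{N_\lambda}_{(p,p)}$ trace out only the degree-$p$ Veronese curve in $\bbP(\SI[K_2,(p,p)]_{(-1,1)})\cong\bbP^p$, so for $p\ge 2$ a generic $f$ is not of the form $c^N_{(p,p)}$ at all, and there is no factorisation of $f$ indexed by the $p$ roots of $\phi$ (such a product would have total degree $p^2$, not $p$). Consequently the step ``$\{f_1=0\}=\{f_2=0\}$ implies $\phi_1,\phi_2$ have the same root set'' has no justification.

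The paper's argument is a two-line version of the degree idea you reach at the very end, but done cleanly and without the detour through binary forms. One uses the standard fact that, for a connected group acting on a UFD, every irreducible factor of a semi-invariant is again a semi-invariant. Since $\SI[K_2,(p,p)]$ is the polynomial ring in the weight-$(-1,1)$ generators $f^{(0)}_{(p,p)},\dots,f^{(p)}_{(p,p)}$, the only nonzero weights occurring are $(-q,q)$ with $q\ge 0$, and weight $0$ contains only constants. Hence a nonzero $f\in\SI[K_2,(p,p)]_{(-1,1)}$ cannot split nontrivially: it is irreducible in $\Bbbk[\rep_{K_2}(p,p)]$. Two irreducible polynomials on an affine space with the same zero locus are associates by the Nullstellensatz, and that is the whole proof. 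Your final paragraph gropes toward exactly this (``each prime factor contributes degree $p$, so each is a single prime''), but the reason each prime factor has degree at least $p$ is the weight argument above, not anything about roots of $\phi$.
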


\begin{proof}
From the description of $\SI [K_2, (p, p)]$ it follows that $f_1$
and $f_2$ are irreducible, hence the claim follows.
\end{proof}

\section{The main result} \label{section result}

Throughout this section we fix a concealed-canonical bound quiver
$\bDelta$ and a sincere separating exact subcategory $\calR$ of
$\ind \bDelta$. We use freely notation introduced in
Section~\ref{section tubular}. We also fix $\bd \in \bR$ such that
$p := p^\bd > 0$.

First we investigate the algebra $\bigoplus_{q \in \bbN} \SI
[\bDelta, \bd]_{\langle q \cdot \bh, - \rangle_\bDelta}$. We
introduce some notation. For $\lambda \in \bbP_\Bbbk^1$ we denote
by $\calI_\lambda^0$ the set of $i \in [0, r_\lambda - 1]$ such
that $p_{\lambda, i}^\bd = 0$. Observe that $\calI_\lambda^0
\subseteq \calI_\lambda$ for each $\lambda \in \bbP_\Bbbk^1$ (the
sets $\calI_\lambda$ for $\lambda \in \bbP_\Bbbk^1$ were
introduced before Corollary~\ref{corollary generatingbis}). Recall
that, for $\lambda \in \bbP_\Bbbk^1$ and $i \in \calI_\lambda$,
$n_{\lambda, i}$ denotes the minimal $n \in \bbN_+$ such that
$p_{\lambda, i + n}^\bd = 0$. We put $c_\bd^\lambda := \prod_{i
\in \calI_\lambda^0} c_\bd^{R_{\lambda, i}^{(n_{\lambda, i})}}$.
An iterated application of Corollary~\ref{corollary
multsemi}\eqref{point multsemi1} to exact sequences of the
form~\eqref{eq sequence} implies that $c_\bd^\lambda =
c_\bd^{R_{\lambda, i}^{(r_\lambda)}}$ for each $i \in
\calI_\lambda^0$.

We have the following fact.

\begin{lemma} \label{label generating}
The algebra $\bigoplus_{q \in \bbN} \SI [\bDelta, \bd]_{\langle q
\cdot \bh, - \rangle_\bDelta}$ is generated by the semi-invariants
$c_\bd^\lambda$ for $\lambda \in \bbX$.
\end{lemma}

\begin{proof}
This fact has been proved in~\cite{Bobinski2011}, but for
completeness we include its (shorter) proof here.

Fix $q \in \bbN$. Proposition~\ref{corollary spanning} and
Lemma~\ref{lemma R} imply that $\SI [\bDelta, \bd]_{\langle q
\cdot \bh, - \rangle_\bDelta}$ is spanned by the semi-invariants
$c_\bd^V$ for ext-minimal $V \in \add \calR$ with dimension vector
$q \cdot \bh$. Fix such $V$. Since $V$ is ext-minimal with
dimension vector $q \cdot \bh$, $V = \bigoplus_{\lambda \in \bbX}
R_{\lambda, i_\lambda}^{(k_\lambda \cdot r_\lambda)}$, where $\bbX
\subseteq \bbP_\Bbbk^1$ and $i_\lambda \in [0, r_\lambda - 1]$ and
$k_\lambda \in \bbN_+$ for each $\lambda \in \bbX$. Moreover,
Lemma~\ref{lemma indecomposable} implies that $i_\lambda \in
\calI_\lambda^0$ for each $\lambda \in \bbX$. An iterated
application of Corollary~\ref{corollary multsemi}\eqref{point
multsemi1} to exact sequences of the form~\eqref{eq sequence}
implies that $c_\bd^{R_{\lambda, i_\lambda}^{(k_\lambda \cdot
r_\lambda)}} = (c_\bd^\lambda)^{k_\lambda}$ for each $\lambda \in
\bbX$. Consequently, $c_\bd^V = \prod_{\lambda \in \bbX}
(c_\bd^\lambda)^{k_\lambda}$ by Lemma~\ref{lemma directsum}, hence
the claim follows.
\end{proof}

The following fact is crucial.

\begin{proposition} \label{proposition regular}
There exists a regular map
\[
\Phi : \rep_{K_2} (p, p) \to \rep_\bDelta (\bd)
\]
such that $\Phi^*$ induces an isomorphism
\[
\bigoplus_{q \in \bbN} \SI [\bDelta, \bd]_{\langle q \cdot \bh, -
\rangle_\bDelta} \to \bigoplus_{q \in \bbN} \SI [K_2, (p,
p)]_{(-q, q)}
\]
of $\bbN$-graded rings and \textup{(}up to a non-zero
scalar\textup{)} $\Phi^* (c_\bd^\lambda) = c_{(p, p)}^{N_\lambda}$
for each $\lambda \in \bbP_\Bbbk^1$.
\end{proposition}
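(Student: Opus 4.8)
The plan is to build the map $\Phi$ from the embedding $F : \rep K_2 \to \rep \bDelta$ of Lenzing--de la Pe\~na recalled in Section~\ref{section Kronecker}. Concretely, fix subsets $I_\lambda \subseteq [0, r_\lambda - 1]$ with $|I_\lambda| = r_\lambda - 1$ so that the omitted indices $i_\lambda$ satisfy $i_\lambda \in \calI_\lambda^0$ for all $\lambda \in \bbX_0$ (this is possible since $p^\bd = p$ forces each $\calI_\lambda^0$ to be nonempty, as $p_\lambda^\bd = 0$ for at least one $\lambda$ — more care is needed here, see below). Set $R := \bigoplus_{\lambda} \bigoplus_{i \in I_\lambda} R_{\lambda, i}$, with $R^\perp$ and $F$ as in the excerpt, so $F(N_\lambda) \simeq R_{\lambda, i_\lambda}^{(r_\lambda)}$. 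For $V \in \rep_{K_2}(p,p)$ the representation $F(V)$ has dimension vector $p \cdot \bdim F(N_\lambda) = p \cdot \bh$ (the dimension vector of $F$ of a simple is the common value $\bh$, since $\bdim R_{\lambda, i_\lambda}^{(r_\lambda)} = \bh$), but to land in $\rep_\bDelta(\bd)$ rather than just $\rep_\bDelta(p\bh)$ I take $\Phi(V) := F(V) \oplus W$ where $W \in \add\calR$ is a fixed ext-minimal representation with dimension vector $\bd - p\cdot\bh$ (which exists since $p^{\bd - p\bh} = 0$); after choosing bases this is a regular — in fact linear plus constant — map $\rep_{K_2}(p,p) \to \rep_\bDelta(\bd)$.

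The next step is to compute $\Phi^*$ on semi-invariants. For $\lambda \in \bbP_\Bbbk^1$, by Lemma~\ref{lemma directsumbis} (or rather its analogue for fixed $V$ and varying module, i.e.\ the multiplicativity of $c$ under direct sums of the tested module together with Lemma~\ref{lemma directsum}) one has $c_\bd^{R_{\lambda, i}^{(n)}}(F(V)\oplus W) = c_?^{R_{\lambda, i}^{(n)}}(F(V)) \cdot c_?^{R_{\lambda, i}^{(n)}}(W)$ up to splitting the dimension vector; the key point is that $c_\bd^\lambda = c_\bd^{R_{\lambda, i_\lambda}^{(r_\lambda)}}$, and since $F$ is fully faithful and exact, $\Hom_\bDelta(R_{\lambda, i_\lambda}^{(r_\lambda)}, F(V)) = \Hom_\bDelta(F(N_\lambda), F(V)) = \Hom_{K_2}(N_\lambda, V)$, with the minimal projective presentation of $F(N_\lambda)$ matching (under $F$, which sends projectives of $R^\perp$ to the modules $R_{\lambda, i}$-perpendicular, but one must check the presentation is $\bd$-admissible and compare determinants via Lemma~\ref{lemma presentation}). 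This yields $\Phi^*(c_\bd^\lambda) = \xi_\lambda \cdot c_{(p,p)}^{N_\lambda}$ for nonzero scalars $\xi_\lambda$, where one absorbs the constant factor $c_?^{R_{\lambda, i_\lambda}^{(r_\lambda)}}(W)$, which is nonzero by Lemma~\ref{lemma extminimal} applied to $W$ (since $i_\lambda \in \calI_\lambda^0$ the relevant Hom into $W$ vanishes). Using \eqref{eq cT} this gives $\Phi^*(f^{(i)}_{(p,p)})$ as an explicit combination of the $c_\bd^\lambda$.

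To see that $\Phi^*$ is an isomorphism of the two $\bbN$-graded rings I argue as follows. By Lemma~\ref{label generating}, $\bigoplus_q \SI[\bDelta,\bd]_{\langle q\bh,-\rangle}$ is generated by the $c_\bd^\lambda$, $\lambda \in \bbX_0$ (for $r_\lambda = 1$ the corresponding $c_\bd^\lambda$ is a unit times one of the $f^{(i)}$ and causes no trouble — more precisely one checks $c_\bd^{N_\lambda}$-type elements still map correctly). On the Kronecker side $\SI[K_2,(p,p)]$ is the polynomial ring in $f^{(0)}_{(p,p)},\dots,f^{(p)}_{(p,p)}$ with $\dim_\Bbbk \SI[K_2,(p,p)]_{(-q,q)} = \binom{q+p}{q}$ by \eqref{eq dimension}. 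Surjectivity of $\Phi^*$ in each degree: since $\Phi^*(c_\bd^\lambda)$ is, up to scalar, $c_{(p,p)}^{N_\lambda} = \sum_i \xi^i\zeta^{p-i} f^{(i)}_{(p,p)}$, and as $\lambda$ ranges over sufficiently many points of $\bbP_\Bbbk^1$ these span $\SI[K_2,(p,p)]_{(-1,1)}$ (a Vandermonde argument on $\geq p+1$ distinct points); hence products of them span all degrees. Injectivity: a nonzero $f$ in the source has, by Proposition~\ref{proposition nonzero}, some $R \in \calR(\bd)$ with $f(R) \neq 0$; since $\calR(\bd)$ is irreducible and $\mathrm{im}\,\Phi$ meets it (because $F(V)\oplus W \in \calR(\bd)$ for generic $V$, as $F(V)$ is then regular and $W \in \add\calR$), density gives $\Phi^*(f) \neq 0$. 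The main obstacle I expect is the bookkeeping in the second paragraph: matching the minimal projective presentation of $F(N_\lambda)$ with that of $N_\lambda$ under $F$ so that the determinant functions agree up to scalar, and verifying $\bd$-admissibility to legitimately invoke Lemma~\ref{lemma presentation}; a clean way around it is to avoid presentations entirely and instead identify both $\Phi^*(c_\bd^\lambda)$ and $c_{(p,p)}^{N_\lambda}$ by their vanishing loci inside $\rep_{K_2}(p,p)$ and apply Lemma~\ref{lemma equality}, after checking both lie in $\SI[K_2,(p,p)]_{(-1,1)}$.
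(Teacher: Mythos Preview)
Your outline follows the paper's approach closely: build $\Phi$ as $F(-) \oplus W$ with $W$ ext-minimal of dimension $\bd - p\bh$, identify $\Phi^*(c_\bd^\lambda)$ with $c_{(p,p)}^{N_\lambda}$ via their common vanishing locus and Lemma~\ref{lemma equality}, and conclude using Lemma~\ref{label generating}. (Your worry about $\calI_\lambda^0$ being nonempty is unfounded: by the very definition of the $p_{\lambda,i}^\bd$ in Section~\ref{section tubular}, for every $\lambda$ some $p_{\lambda,i}^\bd$ vanishes.) Two steps, however, are genuine gaps rather than bookkeeping.

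First, to invoke Lemma~\ref{lemma equality} you must already know that $\Phi^*(c_\bd^\lambda)$ lies in $\SI[K_2,(p,p)]_{(-1,1)}$; more generally you need $\Phi^*$ to carry weight $\langle q\bh,-\rangle_\bDelta$ to weight $(-q,q)$. ``Choosing bases'' for $F$ does not by itself supply this. The paper obtains it by appealing to \cite{RiedtmannSchofield1990}*{Proposition~2.3}, which produces not only a regular map $\Phi': \rep_{K_2}(p,p) \to \rep_\bDelta(p\bh)$ with $\Phi'(N) \simeq F(N)$ but also a group homomorphism $\varphi: \GL(p,p) \to \GL(p\bh)$ making $\Phi'$ equivariant and satisfying $\chi^\theta(\varphi(g)) = (\det g(1))^{\theta(\bdim E_1)}(\det g(2))^{\theta(\bdim E_2)}$, where $E_i = F(S_i)$. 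The weight claim then reduces (via Lemma~\ref{lemma directsumbis} to strip off $W$, and exactness of $F$) to the computation $\langle \bh, \bdim E_i\rangle_\bDelta = \langle (1,1), \bdim S_i\rangle_{K_2} = (-1)^i$.

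Second, your injectivity argument does not go through as stated: that $\mathrm{im}\,\Phi$ merely \emph{meets} $\calR(\bd)$, together with irreducibility of $\calR(\bd)$, does not force a given nonzero semi-invariant $f$ to be nonzero on $\mathrm{im}\,\Phi$. What is needed is that the $\GL(\bd)$-saturation $\calZ$ of $\mathrm{im}\,\Phi$ contains a nonempty open subset of $\calR(\bd)$ (then $f$, having $\GL(\bd)$-stable zero locus and not vanishing identically on $\calR(\bd)$ by Proposition~\ref{proposition nonzero}, cannot vanish on $\calZ$). The paper establishes this by exhibiting the open set $\calU \subseteq \calR(\bd)$ of ext-minimal $M$ with $c_\bd^\lambda(M) \neq 0$ for all $\lambda \in \bbX_0$ and showing $\calU \subseteq \calZ$: any such $M$ sits in a short exact sequence $0 \to R \to M \to W \to 0$ with $\bdim R = p\bh$, the nonvanishing hypotheses force $p_\lambda^R = 0$ for every $\lambda \in \bbX_0$, whence the sequence splits and $R$ lies in the essential image of $F$.
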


\begin{proof}
For each $\lambda \in \bbP_\Bbbk^1$ we fix $i_\lambda \in
\calI_\lambda^0$. From Section~\ref{section Kronecker} we know
that there exists a fully faithful exact functor $F : \rep K_2 \to
\rep \bDelta$ such that $F (N_\lambda) \simeq R_{\lambda,
i_\lambda}^{(r_\lambda)}$ for each $\lambda \in \bbP_\Bbbk^1$.
Observe that for each $R \in \add (\coprod_{\lambda \in
\bbP_\Bbbk^1 \setminus \bbX_0} \calR_\lambda)$ (recall that
$\bbX_0$ is the set of all $\lambda \in \bbP_\Bbbk^1$ such that
$r_\lambda > 1$) there exists $N \in \calT$ with $F (N) \simeq R$.

Put $E_1 := F (S_1)$ and $E_2 := F (S_2)$, where $S_i$ is the
simple representation of $K_2$ at $i$, for $i \in \{ 1, 2 \}$,
i.e.\
\[
S_1 := \xymatrix{\Bbbk & 0 \ar@/_/[l] \ar@/^/[l]} \qquad
\text{and} \qquad S_2 := \xymatrix{0 & \Bbbk \ar@/_/[l]
\ar@/^/[l]}.
\]
Then~\cite{RiedtmannSchofield1990}*{Proposition~2.3} (see
also~\cite{Chindris2010}*{Proposition~5.2}) implies that there
exists a regular map $\Phi' : \rep_{K_2} (p, p) \to \rep_\bDelta
(p \cdot \bh)$ such that $\Phi' (N) \simeq F (N)$ for each $N \in
\rep_{K_2}  (p, p)$. Moreover, there exists a morphism $\varphi :
\GL (p, p) \to \GL (p \cdot \bh)$ of algebraic groups such that
$\Phi' (g \ast N) = \varphi (g) \ast \Phi' (N)$, for all $g \in
\GL (p, p)$ and $N \in \rep_\bDelta (p \cdot \bh)$, and
\begin{equation} \label{eq theta}
\chi^\theta (\varphi (g)) = (\det (g (1))^{\theta (\bdim E_1)}
\cdot (\det g (2))^{\theta (\bdim E_2)},
\end{equation}
for all $g \in \GL (p, p)$ and $\theta \in \bbZ^{\Delta_0}$.

Let $W \in \add \calR$ be an ext-minimal representation for $\bd'
:= \bd - p \cdot \bh$. We define $\Phi : \rep_{K_2} (p, p) \to
\rep_\bDelta (\bd)$ by $\Phi (N) := \Phi' (N) \oplus W$ for $N \in
\rep_{K_2} (p, p)$.

Let $q \in \bbN$. We show that $\Phi^* (f)$ is a semi-invariant of
weight $(-q, q)$ for each $f \in \SI [\bDelta, \bd]_{\langle q
\cdot \bh, - \rangle_\bDelta}$. Using Proposition~\ref{proposition
Domokos} and Lemma~\ref{lemma R} it suffices to show that $\Phi^*
(c^V)$ is a semi-invariant of weight $(-q, q)$ for each
representation $V$ of $\bDelta$ with dimension vector $q \cdot
\bh$. Now, if $g \in \GL (p, p)$ and $N \in \rep_{K_2} (p, p)$,
then
\begin{align*}
(\Phi^* (c_\bd^V)) (g^{-1} \ast N) & = c_\bd^V (W \oplus \Phi'
(g^{-1} \ast N))
\\
& = c_{\bd'}^V (W) \cdot c_{p \cdot \bh}^V (\varphi (g^{-1}) \ast
\Phi' (N))
\\
& = c_{\bd'}^V (W) \cdot \chi^{\langle q \cdot \bh, -
\rangle_\bDelta} (\varphi (g)) \cdot c_{p \cdot \bh}^V (\Phi (N))
\\
& = \chi^{\langle q \cdot \bh, - \rangle_\bDelta} (\varphi (g))
\cdot (\Phi^* (c_\bd^V)) (N),
\end{align*}
where the second and the last equalities follow from
Lemma~\ref{lemma directsumbis}. Using~\eqref{eq theta} we get
\[
\chi^{\langle q \cdot \bh, - \rangle_\bDelta} (\varphi (g)) =
(\det (g (1))^{-q} \cdot (\det (g (2)))^q,
\]
since
\[
\langle \bh, \bdim E_i \rangle_\bDelta = \langle (1, 1), \bdim S_i
\rangle_{K_2} = (-1)^i
\]
for each $i \in \{ 1, 2 \}$ (we use here that $F$ is exact).

The above implies that $\Phi^*$ induces a homomorphism
\begin{equation} \label{eq iso}
\bigoplus_{q \in \bbN} \SI [\bDelta, \bd]_{\langle q \cdot \bh, -
\rangle_\bDelta} \to \bigoplus_{q \in \bbN} \SI [K_2, (p,
p)]_{(-q, q)}
\end{equation}
of $\bbN$-graded rings. We need to show that this is an
isomorphism.

First we show $\Phi^* (f) \neq 0$ for each non-zero semi-invariant
$f$ (in particular, this will imply that~\eqref{eq iso} is a
monomorphism). Let
\begin{multline*}
\calZ := \{ M \in \rep_\bDelta (\bd) : \text{there exists}
\\
\text{$N \in \rep_{K_2} (p, p)$ such that $M \simeq W \oplus \Phi
(N)$} \}.
\end{multline*}
In other words, $\calZ$ in the closure of the image of $\Phi$
under the action of $\GL (\bd)$. Using
Proposition~\ref{proposition nonzero} it suffices to show that
$\calZ$ contains a non-empty open subset of $\calR (\bd)$. Let
\begin{multline*}
\calU := \{ M \in \calR (\bd) : \text{$c_\bd^\lambda (M) \neq 0$
for each $\lambda \in \bbX_0$}
\\
\text{and $\dim_\Bbbk \End_\bDelta (M) = p + \langle \bd, \bd
\rangle_\bDelta$} \}.
\end{multline*}
Since the function
\[
\rep_\bDelta (\bd) \ni M \mapsto \dim_\Bbbk \End_\bDelta (M) \in
\bbZ
\]
is upper semi-continuous, \eqref{eq minimum} implies that $\calU$
is a non-empty open subset $\calR (\bd)$, which consists of
ext-minimal representations. In particular, if $M \in \calU$, then
there exists an exact sequence of the form $0 \to R \to M \to W
\to 0$ with $R \in \add \calR$ such that $\bdim R = p \cdot \bh$.
If $p_\lambda^R \neq 0$ for some $\lambda \in \bbX_0$, then
$\Hom_\bDelta (R_{\lambda, i_\lambda}^{(r_\lambda)}, R) \neq 0$.
Consequently, $\Hom_\bDelta (R_{\lambda, i_\lambda}^{(r_\lambda)},
M) \neq 0$, hence $c_\bd^\lambda (M) = 0$, contradiction. Thus
$p_\lambda^R = 0$ for each $\lambda \in \bbX_0$, hence $M \simeq W
\oplus R$ and $R \in \add (\coprod_{\lambda \in \bbP_\Bbbk^1
\setminus \bbX_0} \calR_\lambda)$. In particular, there exists $N
\in \rep \calT$ such that $F (N) \simeq R$, hence $M \in \calZ$.

Now we fix $\lambda  \in \bbP_k^1$. We show that \textup{(}up to a
non-zero scalar\textup{)} $\Phi^* (c_\bd^\lambda) = c_{(p,
p)}^{N_\lambda}$ for each $\lambda \in \bbP_\Bbbk^1$. According to
Lemma~\ref{label generating} this will imply that~\eqref{eq iso}
is an epimorphism, hence finish the proof. Fix $N \in \rep_{K_2}
(p, p)$. Then
\[
(\Phi^* (c_\bd^\lambda)) (N) = 0 \text{ if and only if }
\Hom_\bDelta (R_{\lambda, i_\lambda}^{(r_\lambda)}, F (N)) \neq 0.
\]
Since $R_{\lambda, i_\lambda}^{(r_\lambda)} \simeq F (N_\lambda)$
and $F$ is fully faithful,
\[
(\Phi^* (c_\bd^\lambda)) (N) = 0 \text{ if and only if }
\Hom_{K_2} (N_\lambda, N) \neq 0.
\]
Similarly, if $N \in \rep_{K_2} (p, p)$, then
\[
c_{(p, p)}^{N_\lambda} (N) = 0 \text{ if and only if } \Hom_{K_2}
(N_\lambda, N) \neq 0.
\]
Consequently, the claim follows from Lemma~\ref{lemma equality}.
\end{proof}

\begin{corollary} \label{corollary dimension}
If $\br \in \bR$ and $\SI [\bDelta, \bd]_{\langle \br, -
\rangle_\bDelta} \neq 0$, then
\[
\dim_\Bbbk \SI [\bDelta, \bd]_{\langle \br, - \rangle_\bDelta} =
\binom{p^\br + p}{p^\br}.
\]
\end{corollary}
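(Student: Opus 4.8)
The plan is to combine the reduction machinery of Proposition~\ref{proposition isomorphism} with the explicit dimension count for the Kronecker quiver in~\eqref{eq dimension}. First I would dispose of the degenerate weights: if $p^{\br} = 0$ then $\br = p^{\br}\cdot\bh + (\br - p^{\br}\cdot\bh)$ has $\br$ itself ext-minimal in $\add\calR$ (since $p^{\br}=0$) and one checks via Lemma~\ref{lemma R} and Corollary~\ref{corollary spanning} that $\SI[\bDelta,\bd]_{\langle\br,-\rangle_\bDelta}$ is at most one-dimensional; combined with the hypothesis that it is nonzero, this gives dimension $1 = \binom{0+p}{0}$, so the formula holds. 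Actually the cleanest route is to not separate cases and instead argue uniformly as follows.

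Fix $\br \in \bR$ with $\SI[\bDelta,\bd]_{\langle\br,-\rangle_\bDelta}\neq 0$, and let $W\in\add\calR$ be an ext-minimal representation with dimension vector $\br - p^{\br}\cdot\bh$. By Proposition~\ref{proposition isomorphism}\eqref{point isomorphism1}, the nonvanishing of $\SI[\bDelta,\bd]_{\langle\br,-\rangle_\bDelta}$ forces $c_\bd^W\neq 0$, and then part~\eqref{point isomorphism2} gives a vector-space isomorphism
\[
\SI[\bDelta,\bd]_{\langle p^{\br}\cdot\bh,\,-\rangle_\bDelta}
\xrightarrow{\ \sim\ }
\SI[\bDelta,\bd]_{\langle\br,\,-\rangle_\bDelta},
\qquad f\mapsto c_\bd^W\cdot f.
\]
Hence it suffices to compute $\dim_\Bbbk \SI[\bDelta,\bd]_{\langle p^{\br}\cdot\bh,\,-\rangle_\bDelta}$, i.e.\ the dimension of the degree-$q$ piece $\SI[\bDelta,\bd]_{\langle q\cdot\bh,-\rangle_\bDelta}$ for $q := p^{\br}$.

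For this I would invoke Proposition~\ref{proposition regular}: the map $\Phi^*$ induces an isomorphism of $\bbN$-graded rings
\[
\bigoplus_{q\in\bbN}\SI[\bDelta,\bd]_{\langle q\cdot\bh,-\rangle_\bDelta}
\xrightarrow{\ \sim\ }
\bigoplus_{q\in\bbN}\SI[K_2,(p,p)]_{(-q,q)},
\]
and in particular it restricts to an isomorphism in each graded component. Therefore
\[
\dim_\Bbbk \SI[\bDelta,\bd]_{\langle q\cdot\bh,-\rangle_\bDelta}
= \dim_\Bbbk \SI[K_2,(p,p)]_{(-q,q)}
= \binom{q+p}{q}
\]
by~\eqref{eq dimension}. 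Substituting $q = p^{\br}$ and using the isomorphism from Proposition~\ref{proposition isomorphism}, we conclude
\[
\dim_\Bbbk \SI[\bDelta,\bd]_{\langle\br,-\rangle_\bDelta}
= \binom{p^{\br}+p}{p^{\br}},
\]
as claimed. There is essentially no obstacle here: all the substantive work has been carried out in Propositions~\ref{proposition isomorphism} and~\ref{proposition regular}, and the corollary is just the bookkeeping that assembles the reduction to the Kronecker quiver (Proposition~\ref{proposition regular}) with the internal reduction of an arbitrary $\bh$-compatible weight $\langle\br,-\rangle_\bDelta$ to the weight $\langle p^{\br}\cdot\bh,-\rangle_\bDelta$ (Proposition~\ref{proposition isomorphism}). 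The only point requiring a moment's care is noting that Proposition~\ref{proposition regular} asserts an isomorphism of graded rings, so it automatically matches up the individual graded pieces and one need not separately track which $c_\bd^\lambda$'s are involved.
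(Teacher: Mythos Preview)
Your proof is correct and follows essentially the same route as the paper: use Proposition~\ref{proposition isomorphism} to reduce to the weight $\langle p^{\br}\cdot\bh,-\rangle_\bDelta$, then Proposition~\ref{proposition regular} to pass to the Kronecker quiver, and finish with~\eqref{eq dimension}. Your initial remarks on the case $p^{\br}=0$ are unnecessary (the uniform argument covers it), but your explicit observation that nonvanishing forces $c_\bd^W\neq 0$ via part~\eqref{point isomorphism1} is a nice bit of extra care.
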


\begin{proof}
Proposition~\ref{proposition isomorphism}\eqref{point
isomorphism2} implies that
\[
\dim_\Bbbk \SI [\bDelta, \bd]_{\langle \br, - \rangle_\bDelta} =
\dim_\Bbbk \SI [\bDelta, \bd]_{\langle p^{\br} \cdot \bh, -
\rangle_\bDelta}.
\]
Next,
\[
\dim_\Bbbk \SI [\bDelta, \bd]_{\langle p^\br \cdot \bh, -
\rangle_\bDelta} = \dim_\Bbbk \SI [K_2, (p, p)]_{(-p^\br, p^\br)}
\]
by Proposition~\ref{proposition regular}, hence the claim follows
from~\eqref{eq dimension}.
\end{proof}

Let $\Phi : \rep_{K_2} (p, p) \to \rep_\bDelta (\bd)$ be a regular
map constructed in Proposition~\ref{proposition regular}. For $j
\in [0, p]$ we denote by $f_\bd^{(j)}$ the inverse image of
$f_{(p, p)}^{(j)}$ under $\Phi^*$. Then \eqref{eq cT} implies that
(up to a non-zero scalar)
\begin{equation} \label{eq cTbis}
c_\bd^{(\zeta : \xi)} = \sum_{j \in [0, p]} \xi^j \cdot \zeta^{p -
j} \cdot f^{(j)}_\bd
\end{equation}
for each $(\zeta : \xi) \in \bbP_\Bbbk^1$. As the first
application we get the following (smaller) set of generators of
$\SI [\bDelta, \bd]$.

\begin{proposition} \label{proposition generators}
The algebra $\SI [\bDelta, \bd]$ is generated by the
semi-in\-var\-iants $f_\bd^{(0)}$, \ldots, $f_\bd^{(p)}$ and
$c_\bd^{R_{\lambda, i + 1}^{(n_{\lambda, i})}}$ for $\lambda \in
\bbX_0$ and $i \in \calI_\lambda$.
\end{proposition}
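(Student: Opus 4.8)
The plan is to start from the generating set provided by Corollary~\ref{corollary generatingbis}, namely the semi-invariants $c_\bd^{R_{\lambda, i+1}^{(n_{\lambda,i})}}$ for $\lambda \in \bbP_\Bbbk^1$ and $i \in \calI_\lambda$, and then show that the contributions coming from $\lambda \in \bbP_\Bbbk^1 \setminus \bbX_0$ (equivalently, from $\lambda$ with $r_\lambda = 1$) are already accounted for by $f_\bd^{(0)}, \ldots, f_\bd^{(p)}$. So first I would split the index set: for $\lambda \in \bbX_0$ we keep the generators $c_\bd^{R_{\lambda, i+1}^{(n_{\lambda,i})}}$ with $i \in \calI_\lambda$ as listed in the statement; for $\lambda \notin \bbX_0$ we have $\calI_\lambda = \{0\}$, $n_{\lambda,0} = 1$, and $\calI_\lambda^0 = \{0\}$, so the single generator is $c_\bd^{R_{\lambda,1}^{(1)}} = c_\bd^{R_{\lambda,0}^{(1)}} = c_\bd^{R_{\lambda,0}^{(r_\lambda)}} = c_\bd^\lambda$. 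Here I am using the identity $c_\bd^\lambda = c_\bd^{R_{\lambda,i}^{(r_\lambda)}}$ recorded just before Lemma~\ref{label generating} (which is trivial when $r_\lambda = 1$). Thus it suffices to show that each $c_\bd^\lambda$ with $\lambda \notin \bbX_0$ lies in the subalgebra generated by $f_\bd^{(0)}, \ldots, f_\bd^{(p)}$.

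The second step is to invoke~\eqref{eq cTbis}: for every $(\zeta : \xi) \in \bbP_\Bbbk^1$ we have, up to a non-zero scalar,
\[
c_\bd^{(\zeta : \xi)} = \sum_{j \in [0,p]} \xi^j \cdot \zeta^{p-j} \cdot f_\bd^{(j)},
\]
which exhibits each $c_\bd^\lambda$ (for $\lambda$ ranging over $\bbP_\Bbbk^1$, in particular over $\bbP_\Bbbk^1 \setminus \bbX_0$) as an explicit $\Bbbk$-linear combination — hence certainly a polynomial expression — in $f_\bd^{(0)}, \ldots, f_\bd^{(p)}$. Note that even the finitely many $c_\bd^\lambda$ with $\lambda \in \bbX_0$ are covered by this identity, but we retain the generators $c_\bd^{R_{\lambda,i+1}^{(n_{\lambda,i})}}$ for $\lambda \in \bbX_0$ separately since for those $\lambda$ the individual factors (with $i \in \calI_\lambda \setminus \calI_\lambda^0$) are not recovered from $c_\bd^\lambda$ alone.

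Putting this together: Corollary~\ref{corollary generatingbis} tells us $\SI[\bDelta,\bd]$ is generated by $\{c_\bd^{R_{\lambda,i+1}^{(n_{\lambda,i})}} : \lambda \in \bbP_\Bbbk^1,\ i \in \calI_\lambda\}$. Replace each generator with $\lambda \notin \bbX_0$ — which equals $c_\bd^\lambda$ as above — by the $f_\bd^{(j)}$'s via~\eqref{eq cTbis}. What remains are exactly the $f_\bd^{(0)}, \ldots, f_\bd^{(p)}$ together with the $c_\bd^{R_{\lambda,i+1}^{(n_{\lambda,i})}}$ for $\lambda \in \bbX_0$ and $i \in \calI_\lambda$, as claimed. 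I do not expect a genuine obstacle here: the proposition is essentially a bookkeeping consequence of Corollary~\ref{corollary generatingbis} and formula~\eqref{eq cTbis}; the only point requiring a moment's care is verifying the reductions $n_{\lambda,0}=1$, $\calI_\lambda = \calI_\lambda^0 = \{0\}$ in the case $r_\lambda = 1$, and the identification $c_\bd^{R_{\lambda,0}^{(r_\lambda)}} = c_\bd^\lambda$, both of which are immediate from the definitions in Section~\ref{section result} and the observation preceding Corollary~\ref{corollary generatingbis}.
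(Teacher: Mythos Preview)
Your proposal is correct and follows essentially the same route as the paper: start from Corollary~\ref{corollary generatingbis}, observe that for $\lambda \in \bbP_\Bbbk^1 \setminus \bbX_0$ the single generator $c_\bd^{R_{\lambda, i + 1}^{(n_{\lambda, i})}}$ coincides with $c_\bd^\lambda$, and then use~\eqref{eq cTbis} to express it in terms of the $f_\bd^{(j)}$'s. Your write-up simply makes the identifications $\calI_\lambda = \{0\}$, $n_{\lambda,0} = 1$, and $c_\bd^{R_{\lambda,1}^{(1)}} = c_\bd^\lambda$ more explicit than the paper does.
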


\begin{proof}
Recall from Corollary~\ref{corollary generatingbis} that the
algebra $\SI [\bDelta, \bd]$ is generated by the semi-invariants
$c_\bd^{R_{\lambda, i + 1}^{(n_{\lambda, i})}}$ for $\lambda \in
\bbP_\Bbbk^1$ and $i \in \calI_\lambda$. Thus we only need to
express, for each $\lambda \in \bbP_\Bbbk^1 \setminus \bbX_0$ and
$i \in \calI_\lambda$, $c_\bd^{R_{\lambda, i + 1}^{(n_{\lambda,
i})}}$ as the polynomial in the semi-invariants listed in the
proposition. However, if $\lambda \in \bbP_\Bbbk^1 \setminus
\bbX_0$ and $i \in \calI_\lambda$, then $c_\bd^{R_{\lambda, i +
1}^{(n_{\lambda, i})}} = c_\bd^\lambda$, hence the claim follows
from~\eqref{eq cTbis}.
\end{proof}

We give another formulation of Proposition~\ref{proposition
generators}. Let $\calA$ be the polynomial algebra in the
indeterminates $S_0$, \ldots, $S_p$ and $T_{\lambda, i}$ for
$\lambda \in \bbX_0$ and $i \in \calI_\lambda$.
Proposition~\ref{proposition generators} says that the
homomorphism $\Psi : \calA \to \SI [\bDelta, \bd]$ given by the
formulas: $\Psi (S_j) := f_\bd^{(j)}$, for $j \in [0, p]$, and
$\Psi (T_{\lambda, i}) := c_\bd^{R_{\lambda, i + 1}^{(n_{\lambda,
i})}}$, for $\lambda \in \bbX_0$ and $i \in \calI_\lambda$, is an
epimorphism. Our last aim is to describe its kernel.

First, we introduce an $\bR$-grading in $\calA$ by specifying the
degrees of the indeterminates as follows: $\deg (S_j) := \bh$ for
$j \in [0, p]$ and $\deg (T_{\lambda, i}) := \be_{\lambda,
i}^{n_{\lambda, i}}$, for $\lambda \in \bbX_0$ and $i \in
\calI_\lambda$. Note that $\Psi$ is a homogenenous map, i.e.\
$\Psi (\calA_\br) = \SI [\bDelta, \bd]_{\langle \br, -
\rangle_\bDelta}$ for each $\br \in \bR$.

Let $\bR_0$ be the submonoid of $\bR$ generated by the elements
$\bh$ and $\be_{\lambda, i}^{n_{\lambda, i}}$ for $\lambda \in
\bbX_0$ and $i \in \calI_\lambda$. Obviously, if $\br \in \bR$,
then $\calA_\br \neq 0$ if and only if $\br \in \bR_0$. Similarly,
Corollary~\ref{corollary generatingbis} implies that $\SI
[\bDelta, \bd]_{\langle \br, - \rangle_\bDelta} \neq 0$ if and
only if $\br \in \bR_0$ (recall that $\SI [\bDelta, \bd]$ is a
domain).

\begin{lemma} \label{lemma dimensionA}
If $\br \in \bR_0$, then
\[
\dim_\Bbbk \calA_\br = \binom{p^\br + p + |\bbX_0|}{p^\br}.
\]
\end{lemma}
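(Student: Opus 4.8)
The plan is to compute $\dim_\Bbbk \calA_\br$ directly from the combinatorics of the $\bR_0$-grading on the polynomial algebra $\calA$. Recall $\calA = \Bbbk[S_0, \ldots, S_p, T_{\lambda, i} : \lambda \in \bbX_0, i \in \calI_\lambda]$, with $\deg S_j = \bh$ for all $j$ and $\deg T_{\lambda, i} = \be_{\lambda, i}^{n_{\lambda, i}}$. So a monomial $S_0^{a_0} \cdots S_p^{a_p} \prod_{\lambda, i} T_{\lambda, i}^{b_{\lambda, i}}$ has degree $(\sum_j a_j)\bh + \sum_{\lambda, i} b_{\lambda, i}\be_{\lambda, i}^{n_{\lambda, i}}$, and $\dim_\Bbbk \calA_\br$ is the number of such monomials of degree $\br$. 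Thus the first step is to understand, for a given $\br \in \bR_0$, how many ways one can write $\br$ as a nonnegative combination $m\bh + \sum_{\lambda, i} b_{\lambda, i}\be_{\lambda, i}^{n_{\lambda, i}}$, and for each such way, how many monomials realize it.

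The key observation I would exploit is that the vectors $\bh$ and the $\be_{\lambda, i}^{n_{\lambda, i}}$ interact separately over the distinct tubes $\lambda \in \bbX_0$: since $\be_{\lambda, i}^{n_{\lambda, i}}$ is supported on the $\calR_\lambda$-composition factors, and since for a fixed $\lambda$ the indices $i \in \calI_\lambda$ with their lengths $n_{\lambda, i}$ are chosen so that the $R_{\lambda, i}^{(n_{\lambda, i})}$ do not overlap (they correspond to the ``valleys'' of $p^\bd_{\lambda, \bullet}$), the $T_{\lambda, i}$-part of a degree-$\br$ monomial is forced: for each $\lambda \in \bbX_0$ the exponents $b_{\lambda, i}$, $i \in \calI_\lambda$, are uniquely determined by $\br$ (this is exactly the kind of uniqueness recorded in Section~\ref{section tubular} for the $p$-coordinates, together with the decomposition preceding Corollary~\ref{corollary generatingbis}). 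Writing $\br = p^\br\bh + \sum_{\lambda \in \bbP_\Bbbk^1}\sum_{i \in [0, r_\lambda - 1]} p_{\lambda, i}^\br \be_{\lambda, i}$, one reads off that the total number of $\bh$'s contributed by the $T$-variables is $\sum_{\lambda \in \bbX_0} k_\lambda$ for suitable uniquely determined $k_\lambda \in \bbN$ (coming from how many full ``wraps'' around the tube the $\be_{\lambda, i}^{n_{\lambda, i}}$ already account for), so that $\sum_j a_j = p^\br - \sum_{\lambda \in \bbX_0} k_\lambda =: N$ and, by the same token, each $b_{\lambda, i}$ is a uniquely determined nonnegative integer. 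Hence the count of degree-$\br$ monomials splits as (number of choices of $(a_0, \ldots, a_p) \in \bbN^{p+1}$ with $\sum a_j = N$) times (number of choices of $(b_{\lambda, i})$ realizing the forced $T$-exponents, which is $1$).

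I would therefore need to check carefully that $N = p^\br$ — i.e.\ that the $T$-variables contribute no hidden multiples of $\bh$ — which amounts to the statement that for $\br \in \bR_0$ the chosen representation $\bigoplus_{\lambda \in \bbX_0} R_{\lambda, i}^{(n_{\lambda, i})}$-type summands, being ext-minimal with no $\bh$-part (by the normalization $p_{\lambda, i}^\br = 0$ for some $i$ in each tube and the construction of ext-minimal $W$ in Section~\ref{section tubular}), genuinely have dimension vector $\sum_{\lambda \in \bbX_0}\sum_{i} b_{\lambda, i}\be_{\lambda, i}^{n_{\lambda, i}}$ lying in the span of the $\be_{\lambda, i}$ with $p^{\bullet} = 0$; then the $\bh$-coefficient of $\br$ is entirely carried by the $S_j$, giving $N = p^\br$. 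Granting this, the number of monomials of degree $\br$ is $\#\{(a_0, \ldots, a_p) \in \bbN^{p+1} : \sum_j a_j = p^\br\} = \binom{p^\br + p}{p}$. But $\binom{p^\br + p}{p} = \binom{p^\br + p}{p^\br}$, which is what the lemma claims — except the lemma has $\binom{p^\br + p + |\bbX_0|}{p^\br}$. This discrepancy is the crux: the extra $|\bbX_0|$ must come from the $T$-variables not being fully rigid, i.e.\ for each $\lambda \in \bbX_0$ there is genuinely a one-parameter family of ways to trade $T_{\lambda, i}$'s against $S_j$'s, because $\bh = \sum_{i \in [0, r_\lambda - 1]} \be_{\lambda, i}$ and a suitable product of all the $T_{\lambda, i}$ over $i \in \calI_\lambda^0$ equals (in degree) a copy of $\bh$ — this is precisely the relation $c_\bd^\lambda = c_\bd^{R_{\lambda, i}^{(r_\lambda)}}$ noted before Lemma~\ref{label generating}.

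So the correct bookkeeping is: for each $\lambda \in \bbX_0$, the monomials in $\{S_j\} \cup \{T_{\lambda, i}\}$ of a fixed degree $m_\lambda \bh + (\text{fixed } \be\text{-part})$ are parametrized by how many ``$\bh$-units'' are split as $S$-products versus as the product $\prod_{i \in \calI_\lambda^0} T_{\lambda, i}$; across the $S_j$'s alone $m$ units give $\binom{m+p}{p}$ monomials, and allowing one of the $|\bbX_0|$ tubes to absorb some units as well turns the single-type count $\binom{\bullet + p}{p}$ into a count of monomials in $p+1+|\bbX_0|$ effective generators subject to $\sum = p^\br$, namely $\binom{p^\br + p + |\bbX_0|}{p^\br}$. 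Concretely: I claim the map sending a degree-$\br$ monomial to the tuple recording, for each tube, the exponent of a distinguished ``wrapping'' $T$-monomial, together with $(a_0, \ldots, a_p)$ and the residual rigid $T$-exponents, is a bijection onto $\{(c_\lambda)_{\lambda \in \bbX_0} \times (a_j)_j : c_\lambda \ge 0, a_j \ge 0, \sum_\lambda c_\lambda + \sum_j a_j = p^\br\}$, whose cardinality is exactly $\binom{p^\br + p + |\bbX_0|}{p^\br}$. The main obstacle will be making this bijection precise — i.e.\ pinning down exactly which ``wrapping'' $T$-monomial one uses per tube and checking that, after factoring it out, the remaining $T$-exponents are uniquely forced by $\br$ with no further $\bh$-ambiguity — which is a careful but elementary unwinding of the uniqueness statements for the $p_{\lambda, i}^\br$ in Section~\ref{section tubular} and the structure of $\calI_\lambda$, $\calI_\lambda^0$.
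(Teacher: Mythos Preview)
Your proposal is correct and arrives at essentially the same argument as the paper, though by a more circuitous route. The paper's proof is organized into two clean steps that you eventually rediscover: first, there is a \emph{unique} monomial of degree $\br - p^{\br}\cdot\bh$ (this is your ``residual rigid $T$-exponents''), so multiplication by it gives a vector-space isomorphism $\calA_{p^{\br}\cdot\bh}\to\calA_\br$; second, $\bigoplus_{q\in\bbN}\calA_{q\cdot\bh}$ is the polynomial algebra on $S_0,\ldots,S_p$ together with the $|\bbX_0|$ ``wrapping'' monomials $\prod_{i\in\calI_\lambda^0}T_{\lambda,i}$, so $\dim_\Bbbk\calA_{p^{\br}\cdot\bh}=\binom{p^{\br}+p+|\bbX_0|}{p^{\br}}$. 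Your bijection to tuples $((c_\lambda),(a_j))$ with $\sum c_\lambda+\sum a_j=p^{\br}$ encodes exactly these two facts at once; separating them as the paper does would spare you the false start and make the ``main obstacle'' you flag (uniqueness of the residual part) into a standalone claim about monomials of degree $\br'$ with $p^{\br'}=0$, which is immediate from the linear independence of the $\be_{\lambda,i}$ once one notes that no $S_j$ can appear.
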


\begin{proof}
One easily observes that there is an isomorphism $\calA_{p^\br
\cdot \bh} \to \calA_\br$ of vector spaces (induced by multiplying
by the unique monomial of degree $\br - p^\br \cdot \bh$).
Moreover, $\bigoplus_{q \in \bbN} \calA_{q \cdot \bh}$ is the
polynomial algebra generated by $S_0$, \ldots, $S_p$ and $\prod_{i
\in \calI_\lambda^0} T_{\lambda, i}$ for $\lambda \in \bbX_0$. Now
the claim follows.
\end{proof}

The formula~\eqref{eq cTbis} implies that for each $\lambda \in
\bbX_0$ there exist $\zeta_\lambda, \xi_\lambda \in \Bbbk$ such
that
\[
\prod_{i \in \calI_\lambda^0} c_\bd^{R_{\lambda, i +
1}^{(n_{\lambda, i})}} = \sum_{j \in [0, p]} \xi_\lambda^j \cdot
\zeta_\lambda^{p - j} \cdot f^{(j)}_\bd.
\]
Obviously, $(\zeta_\lambda, \xi_\lambda) \neq (0, 0)$ and
$(\zeta_\lambda : \xi_\lambda) = \lambda$.

\begin{proposition}
We have
\[
\Ker \Psi = \Bigl( \sum_{j \in [0, p]} \xi_\lambda^j \cdot
\zeta_\lambda^{p - j} \cdot S_j - \prod_{i \in \calI_\lambda^0}
T_{i, \lambda} : \lambda \in \bbX_0 \Bigr).
\]
\end{proposition}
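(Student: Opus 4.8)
The plan is to show that $\Ker \Psi$ equals the ideal $\calJ := \bigl( \sum_{j \in [0,p]} \xi_\lambda^j \cdot \zeta_\lambda^{p-j} \cdot S_j - \prod_{i \in \calI_\lambda^0} T_{i,\lambda} : \lambda \in \bbX_0 \bigr)$ by a dimension count, graded piece by graded piece. The inclusion $\calJ \subseteq \Ker \Psi$ is immediate: by construction $\Psi$ sends $\sum_j \xi_\lambda^j \zeta_\lambda^{p-j} S_j$ to $\sum_j \xi_\lambda^j \zeta_\lambda^{p-j} f_\bd^{(j)}$, which by~\eqref{eq cTbis} equals $c_\bd^{(\zeta_\lambda : \xi_\lambda)} = \prod_{i \in \calI_\lambda^0} c_\bd^{R_{\lambda, i+1}^{(n_{\lambda,i})}}$ (up to the fixed scalar), and this is $\Psi\bigl(\prod_{i \in \calI_\lambda^0} T_{i,\lambda}\bigr)$. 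So each listed generator of $\calJ$ lies in $\Ker \Psi$. Note that each such generator is homogeneous of degree $\bh$ in the $\bR$-grading (since $\deg S_j = \bh$ and $\sum_{i \in \calI_\lambda^0} \be_{\lambda,i}^{n_{\lambda,i}} = \bh$), so $\calJ$ is a homogeneous ideal and $\Ker \Psi/\calJ$ inherits the $\bR$-grading.

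For the reverse inclusion I would work degree by degree. Fix $\br \in \bR_0$; it suffices to show $\dim_\Bbbk (\calA/\calJ)_\br \leq \dim_\Bbbk \SI[\bDelta,\bd]_{\langle \br, - \rangle_\bDelta}$, since $\Psi$ induces a surjection $(\calA/\calJ)_\br \twoheadrightarrow \SI[\bDelta,\bd]_{\langle \br, - \rangle_\bDelta}$ and equality of (finite) dimensions forces it to be an isomorphism, whence $(\Ker\Psi)_\br = \calJ_\br$. By Corollary~\ref{corollary dimension} the right-hand side is $\binom{p^\br + p}{p^\br}$, while Lemma~\ref{lemma dimensionA} gives $\dim_\Bbbk \calA_\br = \binom{p^\br + p + |\bbX_0|}{p^\br}$. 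Thus I need to check that quotienting by $\calJ_\br$ drops the dimension by exactly $\binom{p^\br + p + |\bbX_0|}{p^\br} - \binom{p^\br + p}{p^\br}$. The key structural observation is that the $|\bbX_0|$ relations are ``triangular'': the $\lambda$-th relation expresses the monomial $\prod_{i \in \calI_\lambda^0} T_{i,\lambda}$ as a linear combination of $S_0, \dots, S_p$ alone, and the left-hand sides $\prod_{i\in\calI_\lambda^0} T_{i,\lambda}$ for distinct $\lambda \in \bbX_0$ involve disjoint sets of $T$-variables.

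Concretely, I would argue as in Lemma~\ref{lemma dimensionA}: multiplication by the unique monomial of degree $\br - p^\br\cdot\bh$ gives compatible isomorphisms $\calA_{p^\br\cdot\bh} \xrightarrow{\sim} \calA_\br$ and (using that this monomial is a non-zerodivisor and the relations have degree $\bh$) $(\calA/\calJ)_{p^\br\cdot\bh} \xrightarrow{\sim} (\calA/\calJ)_\br$, so it is enough to treat the case $\br = p^\br \cdot \bh$, i.e.\ to compute the Hilbert series of $B := \bigl(\bigoplus_{q\in\bbN}\calA_{q\cdot\bh}\bigr)/\bigl(\bigoplus_{q\in\bbN}\calJ_{q\cdot\bh}\bigr)$. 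By Lemma~\ref{lemma dimensionA}, $\bigoplus_q \calA_{q\cdot\bh}$ is a polynomial ring in the $p+1+|\bbX_0|$ generators $S_0, \dots, S_p$ and $U_\lambda := \prod_{i\in\calI_\lambda^0} T_{i,\lambda}$ ($\lambda\in\bbX_0$), graded so that each generator has degree $1$; and the intersection of $\calJ$ with this subring is generated by the $|\bbX_0|$ linear forms $U_\lambda - \sum_j \xi_\lambda^j \zeta_\lambda^{p-j} S_j$. These form a regular sequence (they are algebraically independent linear forms in a polynomial ring — the $U_\lambda$ occur one per relation), so $B$ is a polynomial ring in $p+1$ variables, giving $\dim_\Bbbk B_q = \binom{q+p}{q}$, which matches $\dim_\Bbbk \SI[\bDelta,\bd]_{\langle q\cdot\bh,-\rangle_\bDelta}$ via Proposition~\ref{proposition regular} and~\eqref{eq dimension}. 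Hence $(\calA/\calJ)_q \twoheadrightarrow \SI[\bDelta,\bd]_{\langle q\cdot\bh,-\rangle_\bDelta}$ is an isomorphism in each degree, and therefore so is $(\calA/\calJ)_\br \twoheadrightarrow \SI[\bDelta,\bd]_{\langle\br,-\rangle_\bDelta}$ for every $\br\in\bR_0$; since both rings vanish in degrees outside $\bR_0$ (by the remarks preceding Lemma~\ref{lemma dimensionA}), we conclude $\Ker\Psi = \calJ$.

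The main obstacle I anticipate is the bookkeeping needed to justify that $\bigl(\bigoplus_q \calJ_{q\cdot\bh}\bigr)$ inside the subring $\bigoplus_q \calA_{q\cdot\bh}$ is generated precisely by the linear forms $U_\lambda - \sum_j \xi_\lambda^j \zeta_\lambda^{p-j} S_j$ — i.e.\ that no ``new'' relations in $S$- and $U$-variables only arise from $\calJ$ other than $\calA$-multiples of the listed ones restricted to this subring — and the verification that the degree-$\bh$ (i.e.\ degree-$1$) part of $\calJ$ is spanned by exactly these $|\bbX_0|$ forms with no coincidences (this uses $(\zeta_\lambda:\xi_\lambda) = \lambda$ being pairwise distinct, hence the forms $\sum_j \xi_\lambda^j \zeta_\lambda^{p-j} S_j$ are distinct). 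Once the triangular/regular-sequence structure is nailed down, the Hilbert-series comparison is routine.
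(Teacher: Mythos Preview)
Your proposal is correct and follows essentially the same route as the paper: the inclusion $\calJ \subseteq \Ker\Psi$, a graded dimension count via Lemma~\ref{lemma dimensionA} and Corollary~\ref{corollary dimension}, the reduction to degrees $q\cdot\bh$ by multiplication with the unique monomial of degree $\br - p^\br\cdot\bh$, and the identification of $\bigoplus_q(\calA/\calJ)_{q\cdot\bh}$ as a polynomial ring in $p+1$ variables. Your closing concerns are easily dispatched: the linear forms $U_\lambda - \sum_j \xi_\lambda^j\zeta_\lambda^{p-j}S_j$ are independent simply because the $U_\lambda$ are distinct variables (no appeal to distinctness of the $\lambda$'s is needed), and since each generator of $\calJ$ has degree $\bh$, every element of $\calJ_{q\cdot\bh}$ is a sum $\sum_\lambda a_\lambda g_\lambda$ with $a_\lambda\in\calA_{(q-1)\cdot\bh}$, hence already a combination inside the subring $\bigoplus_q\calA_{q\cdot\bh}$.
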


\begin{proof}
Let
\[
\calJ := \Bigl( \sum_{j \in [0, p]} \xi_\lambda^j \cdot
\zeta_\lambda^{p - j} \cdot S_j - \prod_{i \in \calI_\lambda^0}
T_{i, \lambda} : \lambda \in \bbX_0 \Bigr).
\]
Obviously, $\Ker \Psi \subseteq I$. Observe that both $\Ker \Psi$
and $\calJ$ are graded ideals (with respect to the grading
introduced above). Consequently, in order to prove our claim it
suffices to show that $\dim_\Bbbk \calJ_\br = \dim_\Bbbk \Ker
\Psi_\br$ for each $\br \in \bR_0$.

We already know from Lemma~\ref{lemma dimensionA} and
Corollary~\ref{corollary dimension} that
\begin{align*}
\dim_\Bbbk \Ker \Psi_\br & = \dim_\Bbbk A_\br - \dim_\Bbbk \SI
[\bDelta, \br]_{\langle \br, - \rangle_\bDelta}
\\
& = \binom{p^\br + p + |\bbX_0|}{p^\br} - \binom{p^\br + p}{p^\br}
\end{align*}
for each $\br \in \bR_0$. On the other hand, similarly as in the
proof of Lemma~\ref{lemma dimensionA}, we show that $\dim_\Bbbk
\calJ_\br = \dim_\Bbbk \calJ_{p^\br \cdot \bh}$ for each $\br \in
\bR_0$. Moreover, the algebra $\bigoplus_{q \in \bbN} (\calA /
\calJ)_{q \cdot \bh}$ is obviously the polynomial algebra in
$p^\br + p$ indeterminates. This, together with Lemma~\ref{lemma
dimensionA}, immediately implies our claim.
\end{proof}

We may summarize our considerations in the following theorem
(compare~\cite{SkowronskiWeyman1999}*{Theorem~1.1}).

\begin{theorem}
We have the isomorphism
\[
\SI [\bDelta, \bd] \simeq \calA / \Bigl( \sum_{j \in [0, p]}
\xi_\lambda^j \cdot \zeta_\lambda^{p - j} \cdot S_j - \prod_{i \in
\calI_\lambda^0} T_{i, \lambda} : \lambda \in \bbX_0 \Bigr).
\]
If
\[
i (\bd) := \{ \lambda \in \bbX_0 : \text{$|\calI_\lambda| > 1$}
\},
\]
then $\SI [\bDelta, \bd]$ is a complete intersection given by
$\max (0, i (\bd) - p - 1)$ equations. In particular, $\SI
[\bDelta, \bd]$ is  polynomial algebra if and only if $i (\bd)
\leq p + 1$.
\end{theorem}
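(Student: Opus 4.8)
The plan is to take the isomorphism $\SI[\bDelta,\bd]\simeq\calA/\Ker\Psi$ for granted — it is just the combination of the surjectivity of $\Psi$ (Proposition~\ref{proposition generators}) with the description of $\Ker\Psi$ in the preceding proposition — and then to massage this presentation. Write $g_\lambda:=\sum_{j\in[0,p]}\xi_\lambda^j\zeta_\lambda^{p-j}S_j-\prod_{i\in\calI_\lambda^0}T_{\lambda,i}$ for $\lambda\in\bbX_0$ and let $L_\lambda$ denote its linear part. Two features of these generators are the backbone of the argument: the monomial $\prod_{i\in\calI_\lambda^0}T_{\lambda,i}$ only involves indeterminates carrying the index $\lambda$, and these occur in no $g_\mu$ with $\mu\ne\lambda$; and the indeterminates $T_{\lambda,i}$ with $i\in\calI_\lambda\setminus\calI_\lambda^0$ occur in no $g_\mu$ whatsoever.

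First I would simplify. Split off, as a polynomial tensor factor, every indeterminate $T_{\lambda,i}$ with $i\in\calI_\lambda\setminus\calI_\lambda^0$; and for each $\lambda\in\bbX_0$ with $|\calI_\lambda^0|=1$ use $g_\lambda$ to eliminate the unique indeterminate occurring in its monomial. What is left is $\SI[\bDelta,\bd]\simeq(\text{polynomial ring})\otimes(\calA'/\calJ')$, where $\calA'$ is the polynomial ring in $S_0,\dots,S_p$ and the $T_{\lambda,i}$ with $|\calI_\lambda^0|>1$, and $\calJ'$ is generated by the surviving $g_\lambda$ — one for each $\lambda\in\bbX_0$ whose monomial $\prod_{i\in\calI_\lambda^0}T_{\lambda,i}$ is not a single indeterminate, i.e.\ $|\calI_\lambda^0|>1$; their number is $i(\bd)$. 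The coefficient vectors of the $L_\lambda$ are $|\bbX_0|$ distinct points on the rational normal curve of degree $p$, so any $p+1$ of them are linearly independent (Vandermonde). If $i(\bd)\le p+1$, the surviving $L_\lambda$ are independent; after a linear change of the $S$-coordinates each surviving $g_\lambda$ reads $S'_\lambda-(\text{monomial})$, and eliminating $S'_\lambda$ leaves a polynomial ring, so $\SI[\bDelta,\bd]$ is polynomial. If $i(\bd)>p+1$, pick $\lambda_0,\dots,\lambda_p$ among the surviving indices with $L_{\lambda_0},\dots,L_{\lambda_p}$ a basis, turn them into coordinates, and eliminate them by $S'_k=\prod_{i\in\calI_{\lambda_k}^0}T_{\lambda_k,i}$; substituting into the remaining relations exhibits $\SI[\bDelta,\bd]$ as a polynomial ring tensored with $\Bbbk[T_{\lambda,i}:|\calI_\lambda^0|>1]$ modulo an ideal generated by $i(\bd)-p-1$ elements, each a linear combination of square-free monomials of degree $\ge2$.

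To see that this last presentation is a complete intersection of codimension $i(\bd)-p-1$ it suffices, after retracing the eliminations, to prove $\dim\SI[\bDelta,\bd]=\dim\calA-|\bbX_0|$, i.e.\ that the $g_\lambda$ ($\lambda\in\bbX_0$) form a regular sequence in the Cohen--Macaulay ring $\calA$. This is the one genuine computation, and I would do it geometrically. Let $Z$ be the zero set of $\Ker\Psi$ and consider the projection of $Z$ onto the coordinates $S_0,\dots,S_p$. Because the $T$-indeterminates are partitioned according to $\lambda\in\bbX_0$ and each $g_\lambda$ links the block of index $\lambda$ to the $S$-coordinates by the single equation $\prod_{i\in\calI_\lambda^0}T_{\lambda,i}=L_\lambda$, the fibre of this projection over a point $s$ is the product over $\lambda\in\bbX_0$ of the level set $\{\prod_{i\in\calI_\lambda^0}T_{\lambda,i}=L_\lambda(s)\}$, times the free affine space in the $T_{\lambda,i}$ with $i\notin\calI_\lambda^0$. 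As $|\calI_\lambda^0|\ge1$, this monomial is non-constant, so whatever the value $L_\lambda(s)$ the level set is non-empty and purely of dimension $|\calI_\lambda|-1$. Hence every fibre is non-empty of dimension $\sum_{\lambda\in\bbX_0}(|\calI_\lambda|-1)$, so $\dim Z\le(p+1)+\sum_{\lambda\in\bbX_0}(|\calI_\lambda|-1)=\dim\calA-|\bbX_0|$ by the theorem on the dimension of fibres, while $\dim Z\ge\dim\calA-|\bbX_0|$ by Krull's height theorem; Cohen--Macaulayness of $\calA$ then upgrades this to $(g_\lambda)_{\lambda\in\bbX_0}$ being a regular sequence.

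Finally, if $i(\bd)>p+1$ then $\SI[\bDelta,\bd]$ is not polynomial: the generators of the ideal in the last presentation lie in the square of the irrelevant maximal ideal and the ideal is non-zero, so for $\SI[\bDelta,\bd]$ the $\Bbbk$-dimension of $\mathfrak{m}/\mathfrak{m}^2$ exceeds the Krull dimension by exactly $i(\bd)-p-1>0$, whence $\SI[\bDelta,\bd]$ is not regular. Together with the polynomiality already obtained for $i(\bd)\le p+1$ (where $\max(0,i(\bd)-p-1)=0$) this yields all the assertions. I expect the fibre-dimension count of the third paragraph to be the only substantial point; the rest is bookkeeping together with two standard facts — points on a rational normal curve are in linearly general position, and an ideal contained in $\mathfrak{m}^2$ cannot make a ring regular.
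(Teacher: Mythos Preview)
The paper offers no argument for the complete-intersection and polynomiality claims; the theorem is presented as a summary, and only the first isomorphism follows directly from the surjectivity of $\Psi$ together with the computation of $\Ker\Psi$ in the preceding proposition. Your plan---split off the free indeterminates, eliminate variables appearing linearly, invoke linear general position of points on the rational normal curve, and bound fibre dimensions to certify the regular-sequence property---is exactly how one fills this in, and each individual ingredient (the Vandermonde independence, the fibre-dimension estimate giving $\dim Z=\dim\calA-|\bbX_0|$, the $\mathfrak{m}/\mathfrak{m}^2$ obstruction to regularity) is sound.

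There is, however, a genuine gap in matching your output to the statement. You assert that the number of surviving $g_\lambda$---those with $|\calI_\lambda^0|>1$---equals $i(\bd)$, but the theorem defines $i(\bd)$ via the condition $|\calI_\lambda|>1$, and $\calI_\lambda^0$ can be a proper subset of $\calI_\lambda$. For instance, take $r_\lambda=3$ and $(p_{\lambda,0}^\bd,p_{\lambda,1}^\bd,p_{\lambda,2}^\bd)=(0,1,1)$: then $\calI_\lambda^0=\{0\}$ while $\calI_\lambda=\{0,1\}$, since $1\in\calI_\lambda$ with $n_{\lambda,1}=1$ (the intermediate condition being vacuous). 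Your argument actually proves that the minimal number of defining equations is $\max\bigl(0,\;|\{\lambda\in\bbX_0:|\calI_\lambda^0|>1\}|-p-1\bigr)$, which in such a situation is strictly smaller than $\max(0,i(\bd)-p-1)$. So either you are tacitly correcting a misprint in the theorem (in which case you should say so and exhibit the discrepancy), or you owe a proof that $|\calI_\lambda|>1\Leftrightarrow|\calI_\lambda^0|>1$---and the example shows no such proof exists.
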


\bibsection

\begin{biblist}

\bib{AssemSimsonSkowronski2006}{book}{
   author={Assem, I.},
   author={Simson, D.},
   author={Skowro{\'n}ski, A.},
   title={Elements of the Representation Theory of Associative Algebras. Vol. 1},
   series={London Math. Soc. Stud. Texts},
   volume={65},
   publisher={Cambridge Univ. Press },
   place={Cambridge},
   date={2006},
   pages={x+458},
}

\bib{BarotSchroer2001}{article}{
   author={Barot, M.},
   author={Schr{\"o}er, Jan},
   title={Module varieties over canonical algebras},
   journal={J. Algebra},
   volume={246},
   date={2001},
   number={1},
   pages={175--192},
}

\bib{Bobinski2008}{article}{
   author={Bobi{\'n}ski, G.},
   title={Geometry of regular modules over canonical algebras},
   journal={Trans. Amer. Math. Soc.},
   volume={360},
   date={2008},
   number={2},
   pages={717--742},
}

\bib{Bobinski2011}{article}{
   author={Bobi{\'n}ski, G.},
   title={Normality of maximal orbit closures for Euclidean
   quivers},
   journal={Canad. J. Math.},
   volume={64},
   date={2012},
   number={6},
   pages={1222--1247},
}

\bib{BobinskiRiedtmannSkowronski2008}{article}{
   author={Bobi{\'n}ski, G.},
   author={Riedtmann, Ch.},
   author={Skowro{\'n}ski, A.},
   title={Semi-invariants of quivers and their zero sets},
   book={
      title={Trends in Representation Rheory of Algebras and Related Topics},
      editor={Skowro{\'n}ski, A.},
      series={EMS Ser. Congr. Rep.},
      publisher={Eur. Math. Soc.},
      place={Z\"urich},
   },
   date={2008},
   pages={49--99},
}

\bib{BobinskiSkowronski2002}{article}{
   author={Bobi{\'n}ski, G.},
   author={Skowro{\'n}ski, A.},
   title={Geometry of periodic modules over tame concealed and tubular
   algebras},
   journal={Algebr. Represent. Theory},
   volume={5},
   date={2002},
   number={2},
   pages={187--200},
}

\bib{Bongartz1981}{article}{
   author={Bongartz, K.},
   title={Tilted algebras},
   booktitle={Representations of algebras},
   series={Lecture Notes in Math.},
   volume={903},
   publisher={Springer},
   place={Berlin},
   date={1981},
   pages={26--38},
}

\bib{Bongartz1983}{article}{
   author={Bongartz, K.},
   title={Algebras and quadratic forms},
   journal={J. London Math. Soc. (2)},
   volume={28},
   date={1983},
   number={3},
   pages={461--469},
}

\bib{Chindris2010}{article}{
   author={Chindris, C.},
   title={Geometric characterizations of the representation type of hereditary algebras and of canonical algebras},
   journal={Adv. Math.},
   volume={228},
   date={2011},
   number={3},
   pages={1405--1434},
}

\bib{DerksenWeyman2000}{article}{
   author={Derksen, H.},
   author={Weyman, J.},
   title={Semi-invariants of quivers and saturation for
   Littlewood-Richardson coefficients},
   journal={J. Amer. Math. Soc.},
   volume={13},
   date={2000},
   number={3},
   pages={467--479},
}

\bib{DerksenWeyman2002}{article}{
   author={Derksen, H.},
   author={Weyman, J.},
   title={Semi-invariants for quivers with relations},
   journal={J. Algebra},
   volume={258},
   date={2002},
   number={1},
   pages={216--227},
}

\bib{DiTrapano2010}{article}{
   author={Di Trapano, C.},
   title={The algebras of semi-invariants of Euclidean quivers},
   journal={Comm. Algebra},
   volume={39},
   date={2011},
   number={11},
   pages={4357--4373},
 }

\bib{Domokos2002}{article}{
   author={Domokos, M.},
   title={Relative invariants for representations of finite dimensional algebras},
   journal={Manuscripta Math.},
   volume={108},
   date={2002},
   number={1},
   pages={123--133},
}

\bib{DomokosLenzing2000}{article}{
   author={Domokos, M.},
   author={Lenzing, H.},
   title={Invariant theory of canonical algebras},
   journal={J. Algebra},
   volume={228},
   date={2000},
   number={2},
   pages={738--762},
}

\bib{DomokosLenzing2002}{article}{
   author={Domokos, M.},
   author={Lenzing, H.},
   title={Moduli spaces for representations of concealed-canonical algebras},
   journal={J. Algebra},
   volume={251},
   date={2002},
   number={1},
   pages={371--394},
}

\bib{Gabriel1972}{article}{
   author={Gabriel, P.},
   title={Unzerlegbare Darstellungen. I},
   journal={Manuscripta Math.},
   volume={6},
   date={1972},
   pages={71--103; correction, ibid. 6 (1972), 309},
}

\bib{GeissSchroer2003}{article}{
   author={Geiss, Ch.},
   author={Schr{\"o}er, J.},
   title={Varieties of modules over tubular algebras},
   journal={Colloq. Math.},
   volume={95},
   date={2003},
   number={2},
   pages={163--183},
}

\bib{Happel2001}{article}{
   author={Happel, D.},
   title={A characterization of hereditary categories with tilting object},
   journal={Invent. Math.},
   volume={144},
   date={2001},
   number={2},
   pages={381--398},
}

\bib{HappelReitenSmalo1996}{article}{
   author={Happel, D.},
   author={Reiten, I.},
   author={Smal{\o}, S. O.},
   title={Tilting in abelian categories and quasitilted algebras},
   journal={Mem. Amer. Math. Soc.},
   volume={120},
   date={1996},
   number={575},
   pages={viii+ 88},
}

\bib{HappelRingel1982}{article}{
   author={Happel, D.},
   author={Ringel, C. M.},
   title={Tilted algebras},
   journal={Trans. Amer. Math. Soc.},
   volume={274},
   date={1982},
   number={2},
   pages={399--443},
}

\bib{King1994}{article}{
   author={King, A. D.},
   title={Moduli of representations of finite-dimensional algebras},
   journal={Quart. J. Math. Oxford Ser. (2)},
   volume={45},
   date={1994},
   number={180},
   pages={515--530},
}

\bib{LenzingMeltzer1996}{article}{
   author={Lenzing, H.},
   author={Meltzer, H.},
   title={Tilting sheaves and concealed-canonical algebras},
   booktitle={Representation Theory of Algebras},
   series={CMS Conf. Proc.},
   volume={18},
   publisher={Amer. Math. Soc.},
   place={Providence, RI},
   date={1996},
   pages={455--473},
}

\bib{LenzingdelaPena1999}{article}{
   author={Lenzing, H.},
   author={de la Pe{\~n}a, J. A.},
   title={Concealed-canonical algebras and separating tubular families},
   journal={Proc. London Math. Soc. (3)},
   volume={78},
   date={1999},
   number={3},
   pages={513--540},
}

\bib{RiedtmannSchofield1990}{article}{
   author={Riedtmann, Ch.},
   author={Schofield, A.},
   title={On open orbits and their complements},
   journal={J. Algebra},
   volume={130},
   date={1990},
   number={2},
   pages={388--411},
}

\bib{Ringel1980}{article}{
   author={Ringel, C. M.},
   title={The rational invariants of the tame quivers},
   journal={Invent. Math.},
   volume={58},
   date={1980},
   number={3},
   pages={217--239},
}

\bib{Ringel1984}{book}{
   author={Ringel, C. M.},
   title={Tame Algebras and Integral Quadratic Forms},
   series={Lecture Notes in Math.},
   volume={1099},
   publisher={Springer},
   place={Berlin},
   date={1984},
   pages={xiii+376},
}

\bib{Shmelkin1997}{article}{
   author={Shmelkin, D. A.},
   title={Locally semi-simple representations of quivers},
   journal={Transform. Groups},
   volume={12},
   date={2007},
   number={1},
   pages={153--173},
}

\bib{Skowronski1996}{article}{
   author={Skowro{\'n}ski, A.},
   title={On omnipresent tubular families of modules},
   booktitle={Representation theory of algebras},
   series={CMS Conf. Proc.},
   volume={18},
   publisher={Amer. Math. Soc.},
   place={Providence, RI},
   date={1996},
   pages={641--657},
}

\bib{SkowronskiWeyman1999}{article}{
   author={Skowro{\'n}ski, A.},
   author={Weyman, J.},
   title={Semi-invariants of canonical algebras},
   journal={Manuscripta Math.},
   volume={100},
   date={1999},
   number={3},
   pages={391--403},
}

\bib{SkowronskiWeyman2000}{article}{
   author={Skowro{\'n}ski, A.},
   author={Weyman, J.},
   title={The algebras of semi-invariants of quivers},
   journal={Transform. Groups},
   volume={5},
   date={2000},
   number={4},
   pages={361--402},
}

\end{biblist}

\end{document}